\documentclass{amsart}
\usepackage[letterpaper, margin=1in]{geometry}
\usepackage[utf8]{inputenc}
\usepackage{amssymb,latexsym,amsthm,amsmath,mathtools}
\usepackage{graphicx}
\usepackage[alphabetic]{amsrefs}
\usepackage{tikz}
\usetikzlibrary{patterns}
\usepackage{verbatim}
\usepackage{pinlabel}
\usepackage{verbatim}
\usepackage{bm}
\usepackage{xspace}
\usepackage{url}
\usepackage{import}
\usepackage{enumitem} 
\ifx\pdfoutput\undefined
\usepackage{hyperref}
\else
\usepackage[pdftex,colorlinks=true,linkcolor=blue,urlcolor=blue,citecolor=violet]{hyperref}
\fi
\usepackage[nameinlink,capitalise,noabbrev]{cleveref}

\newcommand{\R}{\mathbb{R}}
\newcommand{\C}{\mathbb{C}}
\newcommand{\N}{\mathbb{N}}
\newcommand{\SL}{\mathrm{SL}}

\newcommand{\SO}{\mathrm{SO}}
\newcommand{\Q}{\mathbb{Q}}
\newcommand{\Z}{\mathbb{Z}}
\newcommand{\GL}{\mathrm{GL}}

\renewcommand{\emptyset}{\varnothing}

\DeclareMathOperator{\Aut}{Aut}
\DeclareMathOperator{\Trans}{Trans}

\DeclareMathOperator{\Isom}{Isom}
\DeclareMathOperator{\Aff}{Aff}
\DeclareMathOperator{\Homeo}{Homeo}
\DeclareMathOperator{\Diffeo}{Diffeo}
\DeclareMathOperator{\Map}{Map}

\DeclareMathOperator{\Id}{Id}
\DeclareMathOperator{\diam}{diam}

\theoremstyle{plain}
\newtheorem{theorem}{Theorem}
\numberwithin{theorem}{section}
\newtheorem{corollary}[theorem]{Corollary}
\newtheorem{proposition}[theorem]{Proposition}

\newtheorem{lemma}[theorem]{Lemma}

\theoremstyle{definition}
\newtheorem{definition}[theorem]{Definition}

\theoremstyle{remark}
\newtheorem{remark}[theorem]{Remark}
\newtheorem{question}[theorem]{Question}

\newtheorem{disclaimer}[theorem]{Disclaimer}

\begin{document}

	\title[Realizing groups as symmetries of infinite translation surfaces]{Realizing groups as symmetries of infinite translation surfaces}
	
	\author[Artigiani]{Mauro Artigiani}
	\address{Universidad Nacional de Colombia - Bogot\'a}
	\email{martigiani@unal.edu.co}
	\urladdr{\url{https://m-artigiani.github.io/}}
	
	\author[Randecker]{Anja Randecker}
	\address{Heidelberg University, Germany}
	\email{arandecker@mathi.uni-heidelberg.de}
	\urladdr{\url{https://www.mathi.uni-heidelberg.de/~arandecker/}}
	
	\author[Sadanand]{Chandrika Sadanand}
	\address{Bowdoin College, MSE, USA}
	\email{c.sadanand@bowdoin.edu}
	\urladdr{\url{https://sites.google.com/view/chandrika-sadanand}}
	
	\author[Valdez]{Ferr\'an Valdez}
	\address{Centro de Ciencias Matem\'aticas, UNAM Campus Morelia, M\'exico}
	\email{ferran@matmor.unam.mx}
	\urladdr{\url{https://www.matmor.unam.mx/~ferran/}}
	
	\author[Weitze-Schmith\"{u}sen]{Gabriela Weitze-Schmith\"{u}sen}
	\address{Universit\"{a}t des Saarlandes, Germany}
	\email{weitze@math.uni-sb.de}
	\urladdr{\url{https://www.math.uni-sb.de/ag/weitze/index.php/en/people/weitze-schmithuesen}}
	
	\date{February 20, 2026}
	
	\subjclass[2020]{Primary: 57M60; Secondary: 30F99}
	
	\begin{abstract}
		We provide a complete classification of groups that
		can be realized as isometry groups of a translation surface $M$  with non-finitely generated fundamental group
		and no planar ends. Furthermore, we demonstrate that if $S$ has no
		non-displaceable subsurfaces and its space of ends is self-similar, then every
		countable subgroup of $\GL^+(2,\R)$ can be realized as the Veech group of a
		translation surface $M$ homeomorphic to $S$. The latter result generalizes and
		improves upon the previous findings of
		Przytycki--Valdez--Weitze-Schmith\"{u}sen~\cite{PSV} and
		Maluendas--Valdez~\cite{Maluendas-Valdez17}. To prove these results, we adapt ideas from the work of
		Aougab--Patel--Vlamis~\cite{APV}, which focused on hyperbolic surfaces, to translation surfaces.
	\end{abstract}

	\maketitle

	\section{Introduction}
	\label{sec:introduction}
	A topological surface is of \emph{infinite type} if its fundamental group is not
	finitely generated, some examples can be seen in \cref{fig:examples_inf_surfs}.
	Infinite-type surfaces and their mapping class groups, called \emph{big mapping
		class groups}, have been intensively studied in the last few years and the
	literature on the subject is already quite vast. We refer to the
	survey~\cite{AramayonaVlamis} for an overview of the subject.
	
	A translation structure on a topological surface is an atlas of charts
	whose transition functions, except in the neighborhood of singular points, are
	translations. The study of \emph{compact} translation surfaces is a classical
	and well-developed area of research, with many profound connections to diverse
	branches of mathematics, see, for example,~\cites{Wright15, Zorich} for an
	introduction. In the last 15 years there has also been an interest in
	\emph{infinite-type} translation surfaces, their geometry and dynamics, see the
	upcoming book~\cite{DHV1}.
	
	Symmetries on a translation surface are captured by two groups: the isometry
	group, with respect to their natural flat metric, and the Veech
	group.
	In this paper, we study whether and how the topological type of an infinite-type
	surface imposes conditions on which (abstract) groups can be realized as
	isometry and Veech groups of translation surfaces on a topological surface of
	the given (infinite) type. More precisely, we consider the following.
	
	\begin{question}
		Given a topological surface $S$ of infinite type and a countable
		group $G$, can we realize $G$ as the isometry group of a translation
		structure $M$ on $S$? Given a countable subgroup $G$ of $\GL^+(2,\R)$, can $G$ be realized as the Veech group of a translation structure on $S$?
	\end{question}
	
	Our results concerning the isometry groups are analogous to, and inspired by,
	those recently obtained by Aougab, Patel, and Vlamis in~\cite{APV} for complete
	hyperbolic metrics on surfaces of infinite type. Concretely, we show that the
	topology of the surface imposes conditions, which we classify completely,
	on which groups can be realized as the isometry group of a translation surface.
	
	On the other hand, we provide evidence to support the conjecture that the
	topology of the surface does not impose conditions on which (countable)
	subgroups of $\GL^+(2,\mathbb{R})$ can be Veech groups. The results for the Veech groups
	generalize a series of previous statements by some of the authors and their
	collaborators, see \cref{sec:previousresultsVeech} for more details.
	
	\subsection{Statement of the results}
	
	Ker\'ekj\'art\'o and Richards showed that an orientable surface of infinite type
	is classified using three invariants: the genus $g(S)\in\Z_{\geq 0}\cup\{\infty\}$, the space of ends $E(S)$, and the
	subspace of ends that are accumulated by genus $E^g(S) \subset
	E(S)$, see~\cite{Richards63}.
	
	We organize our investigation and results by partitioning the set of infinite-type surfaces into a trichotomy. Many of our theorems have a triple of statements, one for each part. Roughly
	speaking, the trichotomy generalizes the cases where there is only one,  exactly two, or more than three ends, we refer to \cref{fig:examples_inf_surfs} for examples of
	these three cases and \cref{sec:topo-preliminaries} for the definitions. The partition is created using the terminology of Mann and Rafi~\cite{Mann-Rafi_23} by distinguishing
	infinite-type surfaces whose end space is self-similar and surfaces which have a
	non-displaceable subsurface of finite type. Moreover, we will use the topological notion of
	\emph{translatable surfaces} introduced by Schaffer-Cohen
	in~\cite{schaffer-cohen_20}.
	We stress that the translatability of a surface is a topological property that is unrelated to the geometric concept of a translation structure on a surface.
		
	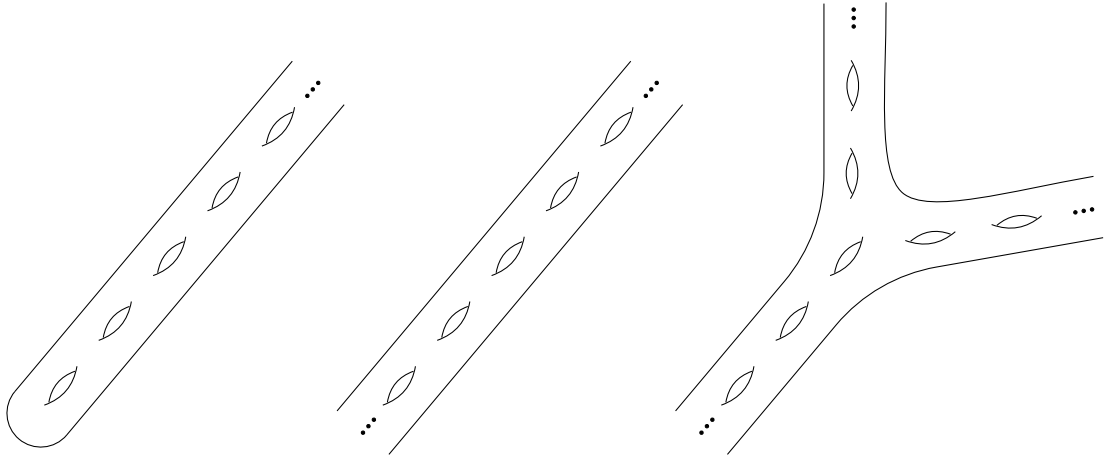
\begin{figure}[tb]
	\centering
	\begin{tikzpicture}[scale=.9]
		
		\begin{scope}[rotate=50, scale=0.25]
			\draw (-2,-2) -- (23.5,-2);
			\draw (-2,2) -- (23.5,2);
			\draw (-2,2) arc (90:270:2cm);
			
			\foreach \g in {0,5,...,20}{
				\begin{scope}[xshift=\g cm]
					\draw (-1.2,0.05) to[bend left] (1.2,0.05);
					\draw (-1.5,0.15) to[bend right] (1.5,0.15);
				\end{scope}
			}
			
			\draw[fill] (22.5,0) circle (0.1cm);
			\draw[fill] (23,0) circle (0.1cm);
			\draw[fill] (23.5,0) circle (0.1cm);
		\end{scope}
		
		\begin{scope}[xshift=5cm, rotate=50, scale=0.25]
			\draw (-3.5,-2) -- (23.5,-2);
			\draw (-3.5,2) -- (23.5,2);
			
			\foreach \g in {0,5,...,20}{
				\begin{scope}[xshift=\g cm]
					\draw (-1.2,0.05) to[bend left] (1.2,0.05);
					\draw (-1.5,0.15) to[bend right] (1.5,0.15);
				\end{scope}
			}
			
			\draw[fill] (-2.5,0) circle (0.1cm);
			\draw[fill] (-3,0) circle (0.1cm);
			\draw[fill] (-3.5,0) circle (0.1cm);
			\draw[fill] (22.5,0) circle (0.1cm);
			\draw[fill] (23,0) circle (0.1cm);
			\draw[fill] (23.5,0) circle (0.1cm);
		\end{scope}
		
		\begin{scope}[xshift=10cm, rotate=50, scale=0.25]
			\draw (-3.5,-2) -- (6.5,-2) arc (90:50:10cm) -- ++ (-40:10cm);
			\draw (-3.5,2) -- (6.5,2) arc (-90:-50:10cm) -- ++ (40:10cm);
			\draw (23,8) .. controls +(-140:5cm) and +(90:2cm) ..
			(15,0) .. controls +(-90:2cm) and +(140:5cm) .. (23,-8);
			
			\foreach \g in {0,5,10}{
				\begin{scope}[xshift=\g cm]
					\draw (-1.2,0.05) to[bend left] (1.2,0.05);
					\draw (-1.5,0.15) to[bend right] (1.5,0.15);
				\end{scope}
			}
			\begin{scope}[xshift=14 cm, yshift=3cm, rotate=40]
				\draw (-1.2,0.05) to[bend left] (1.2,0.05);
				\draw (-1.5,0.15) to[bend right] (1.5,0.15);
			\end{scope}
			\begin{scope}[xshift=18 cm, yshift=6.3cm, rotate=40]
				\draw (-1.2,0.05) to[bend left] (1.2,0.05);
				\draw (-1.5,0.15) to[bend right] (1.5,0.15);
				\draw[fill] (3.5,0) circle (0.1cm);
				\draw[fill] (4,0) circle (0.1cm);
				\draw[fill] (4.5,0) circle (0.1cm);
			\end{scope}
			\begin{scope}[xshift=14 cm, yshift=-3cm, rotate=-40]
				\draw (-1.2,0.05) to[bend left] (1.2,0.05);
				\draw (-1.5,0.15) to[bend right] (1.5,0.15);
			\end{scope}
			\begin{scope}[xshift=18 cm, yshift=-6.3cm, rotate=-40]
				\draw (-1.2,0.05) to[bend left] (1.2,0.05);
				\draw (-1.5,0.15) to[bend right] (1.5,0.15);
				\draw[fill] (3.5,0) circle (0.1cm);
				\draw[fill] (4,0) circle (0.1cm);
				\draw[fill] (4.5,0) circle (0.1cm);
			\end{scope}
			
			\draw[fill] (-2.5,0) circle (0.1cm);
			\draw[fill] (-3,0) circle (0.1cm);
			\draw[fill] (-3.5,0) circle (0.1cm);
		\end{scope}
	\end{tikzpicture}
	
	\caption{Examples of infinite-type surfaces. Right to left: a surface with self-similar end space, a translatable surface, and a surface with a non-displaceable subsurface of finite type.}
	\label{fig:examples_inf_surfs}
\end{figure}

	We write $\Isom(M)$ for the isometry group of a translation surface $M$. Our main result is the following.
	
	\begin{theorem}\label{thm:main}
		Let $S$ be an infinite-type surface such that $E(S) = E^g(S)$ and $G$ be an
		arbitrary group.
		\begin{enumerate}
			\item Assume that $S$ has self-similar end space. Then there exists a translation structure
			$M$ on $S$ such that $\Isom(M)$ is isomorphic to $G$ if and only if $G$ is \emph{countable}.
			\item Assume that $S$ is translatable and $E(S)$ is not self-similar. Then there exists a translation
			structure~$M$ on $S$ such that $\Isom(M)$ is isomorphic to $G$ if and only if $G$ is \emph{virtually
				cyclic}.
			\item In all other cases, there exists a translation structure $M$
			on $S$ such that $\Isom(M)$ is
			isomorphic to~$G$ if and only if $G$ is \emph{finite}.
		\end{enumerate}
	\end{theorem}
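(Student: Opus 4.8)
The plan is to prove necessity and sufficiency separately for each of the three cases, with the bulk of the work going into the sufficiency (realization) direction. For necessity, the key structural fact is that $\Isom(M)$ acts on $M$ preserving the flat metric, hence acts on the space of ends $E(S)$ and on the (discrete) set of singularities; since $M$ is a translation surface with $E(S)=E^g(S)$ it is of infinite area and has no planar ends, so an isometry is determined by its effect on a small neighborhood of a fixed singularity or saddle connection together with a derivative in $\GL^+(2,\R)$; pinning down countably many combinatorial data forces $\Isom(M)$ to be countable in all cases, giving part (1)'s "only if". For parts (2) and (3) one argues that the induced action on $E(S)$ must be compatible with the topological constraints: when $E(S)$ is self-similar, Mann--Rafi-type arguments show no nontrivial finite or cyclic constraint is forced, but when $S$ is translatable with non-self-similar ends the isometry group must virtually respect the "translation" direction on the ends and hence is virtually cyclic, while in the remaining cases (non-translatable, non-self-similar) any isometry must fix the canonical finite partition of $E(S)$ coming from Mann--Rafi's stable neighborhoods, so only finitely many ends can be permuted and — after controlling the singular set — $\Isom(M)$ is finite.

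For sufficiency, I would follow the Aougab--Patel--Vlamis strategy adapted to the translation setting, as the abstract advertises. The core construction is a building-block argument: one designs a translation surface $M$ homeomorphic to $S$ that is assembled from countably many copies of a fixed finite-type translation "gadget" carrying a marking, glued along a tree- or graph-like pattern that encodes the desired end space. Given a countable group $G$, one encodes $G$ via a connected graph (e.g.\ a Cayley graph, or a more flexible graph whose automorphism group is exactly $G$, using the classical Frucht-type / Sabidussi-type realization of groups as automorphism groups of graphs) and then realizes this graph as the "adjacency pattern" of the gadgets, using distinct flat geometry in each gadget to kill all non-combinatorial isometries and using the graph's symmetry to produce exactly the isometries in $G$. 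For part (1) the self-similarity of $E(S)$ is exactly what lets us absorb the infinitely many gadgets without changing the homeomorphism type (a self-similarity / absorption argument à la Mann--Rafi and Schaffer-Cohen). For part (2), translatability forces the construction to be essentially a bi-infinite line of gadgets, whose symmetry group is forced to be virtually cyclic, so one instead realizes a given virtually cyclic $G$ directly as the symmetry group of such a line (with finite decorations at a bounded location to handle the finite-index finite part). For part (3), the canonical finite partition means only a finite "core" can be decorated symmetrically, and one realizes an arbitrary finite $G$ as isometries permuting finitely many identical gadgets attached to a rigid infinite background.

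The main obstacle I expect is the simultaneous control of two things in the realization direction: (a) ensuring the resulting translation surface is genuinely homeomorphic to the prescribed $S$ — this requires that the gluing pattern produce exactly the right end space (including the "accumulated by genus" structure, which is automatic here since $E(S)=E^g(S)$, so every gadget should carry positive genus) and not accidentally create planar ends or change the genus invariant; and (b) ensuring no \emph{extra} isometries appear. Point (b) is the delicate one: translation surfaces have an abundance of local symmetries (e.g.\ the hyperelliptic-type involution $z \mapsto -z$ near any regular point or any even-order singularity, and rotations by $2\pi k/(\text{cone angle})$ at singular points), so each gadget must be built to be "flat-rigid" — admitting no nontrivial self-isometry and no isometry to any other gadget — for instance by giving each a distinct systole, a distinct cone-angle multiset, or a distinguished saddle connection of unique length, and by choosing the gluings so that the global surface has trivial isometry group before the symmetric decoration is imposed. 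Verifying flat-rigidity of the gadget and stability of this rigidity under the infinite gluing is where the real technical content lies; once that lemma is in hand, the passage from "graph with automorphism group $G$" to "translation surface with isometry group $G$" is formal, and the three cases differ only in which graphs are admissible given the topological constraints on $S$.
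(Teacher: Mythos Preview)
Your sufficiency sketch is broadly aligned with the paper's approach: both build $M$ from copies of a ``vertex surface'' indexed by a Cayley graph of $G$, kill extra symmetries by implanting a rigid asymmetric piece (the paper uses a convex icosagon producing a unique cone point of angle $\neq 4\pi$), and use self-similarity/radial symmetry of $E(S)$ to certify the homeomorphism type. The paper's gluings are via slits rather than boundary identifications, and it uses the \emph{complete} Cayley graph (so $\Aut(C_G)\cong G$ for free) rather than a Frucht-type graph, but these are implementation choices.

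The genuine gap is on the \emph{necessity} side. Your countability argument (``an isometry is determined by its effect near a singularity/saddle connection together with a derivative'') is not robust: a translation structure on an infinite-type surface need have no singularities and no saddle connections at all, and even when they exist, ``countably many combinatorial choices'' does not by itself bound the group. The paper's mechanism is different and essential: since a translation structure induces a Riemann surface structure, $\Isom(M)\le\Aut(X)$, and for any $X$ not on the short uniformization list (sphere, plane, disk, annulus, torus, punctured plane) the automorphism group acts \emph{properly discontinuously}. This single fact (their Corollary~2.12) is what drives all three necessity statements via a purely topological lemma (their Lemma~2.10): a properly discontinuous subgroup of $\Homeo(S)$ is countable; is finite if $S$ has a finite-type non-displaceable subsurface; and is virtually cyclic if $E(S)$ is doubly pointed. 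Your argument for Case~(3) --- ``the isometry must fix a canonical finite partition of $E(S)$, so only finitely many ends are permuted, hence $\Isom(M)$ finite'' --- does not follow: an infinite group can act trivially on $E(S)$, and without proper discontinuity nothing prevents this. Likewise in Case~(2), ``must virtually respect the translation direction'' is the conclusion, not the argument; the paper extracts an explicit homomorphism to $\Z$ with finite kernel, again using proper discontinuity to rule out infinitely many elements meeting a compact set. Finally, in Case~(3) the paper splits into two subcases via its trichotomy lemma (non-displaceable subsurface, or doubly-pointed but not translatable) and in the latter uses the equivalence in their Theorem~1.4 to rule out infinite-order isometries; your ``stable neighborhoods'' sketch does not capture this subdivision.
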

	
	This result is an analogue in the realm of translation surfaces of~\cite{APV}*{Theorem~6.2}.
	We stress however that~\cref{thm:main} is stronger than its analogue for the hyperbolic
	case, since, by using the more restrictive notion of
	translatable surfaces instead of surfaces with doubly-pointed end space\,\footnote{
		A space of ends $E(S)$ is \emph{doubly pointed} if there are exactly two points in it whose $\Homeo(S)$-orbit is finite, see~\cref{def:doubly pointed}.}, we are
	able to obtain a trichotomy. Observe that, if a surface is translatable, it
	cannot contain non-displaceable subsurfaces of finite type.  Using the same three
	categories as in \cref{thm:main}, we can also improve~\cite{APV}*{Theorem~6.2}, in
	the complete hyperbolic case, to a trichotomy, more precisely:
	
	\begin{theorem}
		\label{thm:improvingAPV1}
		Let $S$ be an infinite-type surface such that $E(S) = E^g(S)$ and $G$ be an arbitrary group.
		\begin{enumerate}
			\item Assume that $S$ has self-similar end space. Then there exists a complete hyperbolic metric $M$ on $S$ such that $\Isom(M)$ is isomorphic to $G$ if and only if $G$ is \emph{countable}.
			\item Assume that $S$ is translatable and $E(S)$ is not self-similar. Then there exists a complete hyperbolic metric $M$ on $S$ such that $\Isom(M)$ is isomorphic to $G$ if and only if $G$ is \emph{virtually cyclic}.
			\item In all other cases, there exists a complete hyperbolic metric $M$ on $S$ such that $\Isom(M)$ is isomorphic to $G$ if and only if $G$ is \emph{finite}.
		\end{enumerate}
	\end{theorem}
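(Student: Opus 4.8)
The plan is to bootstrap the statement from the topological trichotomy of \cref{lemma:trichotomy-of-surfaces}, the realization constructions of Aougab--Patel--Vlamis, and the $\Z$--cover description of translatable surfaces from \cref{lem:translatable_Z-cover} and \cref{rmk:translatable_Z-cover}. By \cref{lemma:trichotomy-of-surfaces}, an infinite-type surface $S$ with $E(S)=E^g(S)$ lies in exactly one of the three classes of the statement (self-similar end space; translatable with non-self-similar end space; having a non-displaceable finite-type subsurface or a doubly-pointed, non-translatable end space), so the three items are mutually exclusive and jointly exhaustive and it suffices to treat each separately; note also that $E^g(S)=E(S)\neq\emptyset$ forces $g(S)=\infty$, so the positive-genus hypotheses of the auxiliary theorems hold automatically. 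In every case the ``only if'' direction is restricted by the general fact that $\Isom(M)$ acts properly discontinuously on $M$ (\cref{cor:infinite_type_properly_discontinuously}) and hence is \emph{countable}. Item (1) is then exactly \cite{APV}*{Theorem~6.2} in the self-similar case — the ``if'' direction is the Aougab--Patel--Vlamis construction realizing an arbitrary countable group by a complete hyperbolic metric, the ``only if'' direction is the countability just recalled — and the ``if'' direction of item (3), realizing any finite group on an arbitrary such $S$, is the finite realization of \cite{APV} (the hyperbolic analogue of \cref{thm:realization-finite-groups}).

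The substance is item (2) and the ``only if'' direction of item (3). For item (2): since $S$ is translatable with non-self-similar end space, \cref{lem:translatable_non_self-similar_implies_doubly_pointed} shows $E(S)$ is doubly pointed, so the ``only if'' statement is the hyperbolic analogue of \cref{thm:realization-isometries-doubly-pointed}, whose proof uses only proper discontinuity of the $\Isom(M)$--action and the topological structure of a doubly-pointed end space. For the ``if'' direction, let $G$ be virtually cyclic. If $G$ is finite, apply the finite realization above. If $G$ is infinite, it has a finite-index copy of $\Z$, and by \cref{lem:translatable_Z-cover}(1) the surface $S$ is a regular $\Z$--cover of $S'\coloneqq S/\langle h\rangle$; one equips a fundamental domain for the $\Z$--action with a complete hyperbolic metric, spreads it $\Z$--equivariantly over $S$, and decorates it — exactly as in the corresponding APV-style construction for translation surfaces — so that the resulting isometry group is isomorphic to $G$.

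For the ``only if'' direction of item (3): if $S$ has a non-displaceable finite-type subsurface, then $\Isom(M)$ is finite by the hyperbolic analogue of \cref{thm:non-displaceable-implies-finite-isometries} (contained in \cite{APV}*{Theorem~6.2}). Otherwise $S$ is doubly pointed but not translatable; the hyperbolic analogue of \cref{thm:realization-isometries-doubly-pointed} again gives that $\Isom(M)$ is virtually cyclic, so were it infinite it would contain an isometry $h$ of infinite order. Then \cref{lem:translatable_Z-cover}(2) together with \cref{rmk:translatable_Z-cover} presents $S$ as a regular $\Z$--cover with $E(S)=\bigsqcup_{n\in\Z}E_n\sqcup\{e_+,e_-\}$, where $h^k(E_n)=E_{n+k}$ and $\lim_{k\to\pm\infty}h^k(e)=e_\pm$ for all $e\in\bigsqcup_n E_n$. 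If $e_+=e_-$ then $E(S)$ has radial symmetry, hence is self-similar by \cref{thm:self-similar-radially-symmetric}, contradicting that it is doubly pointed; if $e_+\neq e_-$ then, using that a closed curve is compact and therefore meets only finitely many of the $E_n$, one checks directly that $h$ is a translation in the sense of \cref{def:translatable}, so $S$ is translatable — again a contradiction. Hence $\Isom(M)$ is finite, which completes the proof.

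I expect the main obstacle to be the ``if'' direction of item (2): producing a complete hyperbolic metric on the $\Z$--cover whose isometry group is \emph{exactly} $G$ forces one to decorate the fundamental domain delicately enough to destroy all unwanted isometries while still realizing the finite ``twist'' part of the virtually cyclic group (including dihedral-type extensions of $\Z$). This is the same difficulty one meets in the translation-surface constructions and is handled by the APV machinery; the genuinely new ingredients here are the exhaustiveness of the trichotomy (\cref{lemma:trichotomy-of-surfaces}) and the $\Z$--cover description of translatable surfaces (\cref{lem:translatable_Z-cover}), which together upgrade \cite{APV}*{Theorem~6.2} to a clean trichotomy.
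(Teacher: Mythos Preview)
Your proposal is correct and follows essentially the same strategy as the paper: reduce via the trichotomy and \cite{APV}*{Theorem~6.2} to the two new statements (realizing every virtually cyclic $G$ on a translatable non-self-similar $S$, and showing $\Isom(M)$ is finite when $S$ is doubly pointed but not translatable), and handle both through the $\Z$--cover picture of \cref{lem:translatable_Z-cover}. The paper packages both into a single hyperbolic analogue of \cref{thm:realization-isometries-translatable}, but the content is the same.

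One expositional slip worth flagging: in the ``if'' direction of item (2) you describe the construction as ``equip a fundamental domain for the $\Z$--action with a complete hyperbolic metric, spread it $\Z$--equivariantly, and decorate.'' Taken literally this cannot produce $\Isom(M)\cong G$ when $G$ is, say, $D_\infty$ or $\Z\times\Z/n$: a $\Z$--invariant metric followed by decoration can only \emph{lose} symmetry, not manufacture the finite part of $G$. What the paper (and the APV-style construction you correctly point to) actually does is take $S'=S/\langle h\rangle$ as \emph{vertex} surface, remove disks and rigidify, and then assemble copies along the Cayley graph of $G$ with edge surfaces; the result is homeomorphic to $S$ precisely because $G$ is two-ended. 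Your reference to ``the corresponding APV-style construction'' shows you have the right object in mind, so this is a wording issue rather than a gap. Similarly, in your item (3) argument the $E_n$ are subsets of $E(S)$, not of $S$; what you want is that a closed curve meets only finitely many translates of a fundamental domain for $\langle h\rangle$, which is exactly the content of the paper's $(2)\Rightarrow(1)$ step.
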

	
	We present the proof of~\cref{thm:improvingAPV1} in~\cref{ssec:proof-APV-improved}.

	\begin{remark}
		The statement in \cite{APV}*{Theorem 6.2} differs from \cref{thm:improvingAPV1} in two parts. First, part~(2) in \cite{APV}*{Theorem 6.2} says that if $E(S)$ is doubly-pointed then the isometry group of any complete hyperbolic metric of $S$ is virtually cyclic. Second, (3) in the same statement says that if $S$ contains a compact non-displaceable surface then there exists a complete hyperbolic metric if and only if $G$ is finite. Moreover, as remarked by Aougab--Patel--Vlamis, \cite{APV}*{Theorem 6.2} is not a trichotomy.
	\end{remark}
	
	We obtain our main theorem as the corollary of several results, each dealing
	with one case and implication of the trichotomy. From these results, it is clear that
	$E^g(S) = E(S)$ is not an optimal hypothesis. We still decided to present our main result as stated in~\cref{thm:main} for the sake of readability.
	
	We begin by analyzing the self-similar case. In this case, we prove the following.
	
	\begin{theorem}
		\label{thm:realization-isometries-selfsimilar}
		Let $S$ be an infinite-type surface such that $g(S)>0$, $E(S)$ is
		self-similar, and where every subsurface $S'\subset S$ of finite type is displaceable.
		Let $G$ be any \emph{infinite} group. Then $G$ is \emph{countable} if and
		only if there exists a translation structure $M$ on $S$ such that
		$\Isom(M)$ is isomorphic to~$G$.
	\end{theorem}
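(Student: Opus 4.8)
The forward direction is the easy one: the isometry group of a translation surface with infinitely many singularities (or with countable point-set data) is always countable, because an isometry of a connected translation surface is determined by its differential in $\GL^+(2,\mathbb{R})$ together with the image of a single point, and for an infinite-type surface the relevant combinatorial/geometric data is countable. More precisely, I would observe that $\Isom(M)$ acts faithfully on the (countable) set of singularities together with a fixed basepoint, so $|\Isom(M)| \le \aleph_0$; this is where I would cite the analogous bookkeeping from~\cite{APV}, adapted to the flat setting. So the whole content is the converse: given a countable infinite group $G$, build a translation structure $M$ on $S$ with $\Isom(M) \cong G$.

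The plan for the converse is to follow the strategy of~\cite{APV} transplanted to translation surfaces. First I would realize $G$ as a group of symmetries of a connected graph (or a $2$-complex) $X$: by the classical construction one embeds $G$ into the automorphism group of its Cayley graph, then modifies the Cayley graph by decorating edges/vertices with rigid asymmetric gadgets so that the full automorphism group becomes exactly $G$ (the ``rigidification'' trick). Next I would thicken $X$ into a translation surface: replace each vertex by a flat building block carrying a chosen translation structure, each edge by a flat cylinder or a slit-identification, choosing these blocks so that $G$ acts on the resulting surface $M_0$ by translations/isometries and so that the construction introduces no unwanted isometries — e.g., give distinct ``colors'' to the finitely many edge-types of the decorated graph by using flat pieces of pairwise incommensurable moduli, or with distinguished cone angles, so that any isometry of $M_0$ must permute the blocks respecting colors and hence descend to a graph automorphism, giving $\Isom(M_0) \cong \Aut(X) \cong G$.

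The remaining, and genuinely delicate, step is to arrange that the underlying topological surface of $M_0$ is homeomorphic to the prescribed $S$ rather than to some other infinite-type surface. Here is where the hypotheses on $S$ enter. Since $g(S) > 0$, I can and should build positive genus into each block; since $E(S)$ is self-similar and every finite-type subsurface of $S$ is displaceable, Mann--Rafi-type results (and the Ker\'ekj\'art\'o--Richards classification) say that $S$ is determined by rather flexible data, so I have freedom to choose the blocks and the way the infinitely many translated copies accumulate. Concretely, I would take the $G$-action on the decorated graph, thicken equivariantly, and then perform an equivariant ``end surgery'': attach to the surface, in a $G$-invariant way, a collection of handles and ends whose closure realizes the correct end space $E(S) = E^g(S)$. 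Self-similarity is exactly the property that lets one absorb the (countably many, $G$-permuted) local contributions into a single homeomorphism type; this is the analogue of the absorption arguments in~\cite{APV}. I expect the main obstacle to be precisely this matching of the end space while keeping the action free of spurious symmetries: one must verify that the surgeries used to fix the topology do not themselves create isometries outside $G$, which I would handle by performing them with the same incommensurable-modulus coloring scheme, and that after the surgeries the isometry group has not shrunk, which follows because the decorations were chosen rigid. I would isolate the equivariant end-space realization as a separate lemma and invoke it here, reducing the proof of Theorem~\ref{thm:realization-isometries-selfsimilar} to the combination of that lemma with the rigid thickening construction.
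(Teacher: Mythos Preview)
Your forward direction contains a genuine error. An isometry of a translation surface is \emph{not} determined by its derivative together with the image of one point: already a flat torus or an infinite flat cylinder has a continuum of translations with identity derivative, and infinite-type translation surfaces can likewise carry large translation groups. Nor does $\Isom(M)$ in general act faithfully on the singular set (which may even be empty). The paper's argument is different and cleaner: $\Isom(M)$ embeds in the conformal automorphism group of the underlying Riemann surface, and uniformization (\cref{thm:BigAnalyticAut}, \cref{cor:infinite_type_properly_discontinuously}) forces this group to act properly discontinuously once the surface is of infinite type; countability then follows from a compact-exhaustion argument (\cref{thm:topological-restrictions-homeo}(1)). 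You should replace your forward-direction sketch with this.

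For the converse, your outline is in the right spirit---Cayley graph, thicken to flat building blocks, rigidify, match the topology---but the step you flag as ``genuinely delicate'' and defer to a black-box ``equivariant end-surgery lemma'' is exactly where the paper's idea lives, and your proposal does not yet contain it. The paper does \emph{not} build an arbitrary surface and then perform surgery to repair the end space. Instead it uses the radial-symmetry decomposition $E(S)\setminus\{x_\infty\}=\bigsqcup_n E_n$ (equivalent to self-similarity) from the outset: each vertex surface $M_g$ is built by end-grafting so that $E(M_g)\cong \overline{E_1}$, with the star point $x_\infty$ sitting at the end of an embedded half-plane; the copies $M_g$ are then glued along \emph{infinite} vertical rays in those half-planes, which merges all the star points $\{x_{\infty,g}\}_{g\in G}$ into a single end $y_\infty$. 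The resulting end space is then $\bigsqcup_{g\in G} E_1 \sqcup \{y_\infty\}\cong E(S)$ on the nose, with no post-hoc absorption needed. Your finite-slit/cylinder picture would not achieve this merging and would leave you with one star point per vertex. Similarly, the rigidification in the paper is not at the graph level (the \emph{complete} Cayley graph is used, so $\Aut(C_G)=G$ automatically) but at the vertex-surface level, via a single asymmetric convex polygon carved out of each $M_g$ to produce a unique cone point of angle $\neq 4\pi$; this gives a distinguished frame of short saddle connections pinning down any isometry (\cref{lemma:isometries-are-correct}). Your incommensurable-moduli scheme could be made to work, but you should be aware that it is not the mechanism the paper uses.
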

	
	We remark that the surfaces considered in~\cite{APV}*{Theorem~6.2} are a strict
	subclass of those considered in the result above. In fact, there are infinitely
	many examples of surfaces satisfying the hypothesis of
	\cref{thm:realization-isometries-selfsimilar} having only one end
	accumulated by genus and infinitely many planar ends. For example, consider the
	realization of the Loch Ness monster\,\footnote{The \emph{Loch Ness monster} is
		the unique surface of infinite type with only one end and which is accumulated
		by genus.} as tori along a lattice in $\mathbb{R}^2$, and then remove from each
	torus a copy of $\omega^n+1$, where $\omega$ is the first infinite ordinal and $n\in\N$ is fixed. Conversely, every
	infinite-genus surface with self-similar end space has no finite-type
	non-displaceable subsurfaces. Indeed, if $S$ is a surface of infinite genus with
	self-similar end space, \cite{Mann-Rafi_23}*{Proposition~3.1} implies
	that the mapping class group of $S$, $\Map(S)$, is coarsely bounded. However,
	\cite{Mann-Rafi_23}*{Theorem~1.9} yields that the existence of a finite-type
	non-displaceable subsurface implies that $\Map(S)$ is not coarsely bounded.
	
	For surfaces which additionally do not have planar ends,
	\cref{thm:realization-isometries-selfsimilar} can be deduced from~\cite{APV}*{Theorem~6.2}, see \cref{rem:deduction-proof}.
	To cover all remaining cases (and for readers who are not familiar with~\cite{APV}),
	we present a constructive proof of \cref{thm:realization-isometries-selfsimilar},
	inspired by~\cite{APV}*{Theorem~6.2}.
	Furthermore, this constructive approach is used to obtain the results for Veech groups (see \cref{sec:proofs-Veech}), where one cannot directly build upon \cite{APV}.	
	
	In the case of doubly-pointed end space, which includes the case of translatable
	surface with non-self-similar end space, we prove that the isometry group is always virtually cyclic.
	
	\begin{theorem}
		\label{thm:realization-isometries-doubly-pointed}
		Let $S$ be an infinite-type surface. If the end space of $S$ is doubly-pointed, then
		for any translation structure $M$ on $S$, the group $\Isom(M)$ is virtually
		cyclic.
	\end{theorem}
	
	Before we continue, let us comment on the difference between the previous two
	results. The proof of~\cref{thm:realization-isometries-selfsimilar} relies on
	the notion of \emph{radially symmetric} end spaces, which was introduced
	in~\cite{APV} and is equivalent to self-similarity. This definition provides
	what can be considered a ``normal form'' for the end space of $S$, allowing for
	an if-and-only-if statement for countable groups. The lack of such normal form
	for surfaces with doubly-pointed end space is the reason we are only able to
	obtain one direction in~\cref{thm:realization-isometries-doubly-pointed} and
	it is one of the reasons that led us to consider the narrower class of
	translatable surfaces. For translatable surfaces, we obtain the following.
	
	\begin{theorem} \label{thm:realization-isometries-translatable}
		Let $S$ be an infinite-type surface with doubly-pointed end space, $g(S)>0$, and $G$ be
		a group. Then the following are
		equivalent.
		\begin{enumerate}
			\item $S$ is translatable.
			\item There exists a translation structure $M$ on $S$ with $\Isom(M) \cong G$ if and only if $G$ is virtually cyclic.
			\item There exists a translation structure $M$ on $S$ such that $\Isom(M)$ contains an element of infinite order.
		\end{enumerate}
	\end{theorem}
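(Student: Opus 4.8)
The plan is to prove the cycle of implications $(1)\Rightarrow(2)\Rightarrow(3)\Rightarrow(1)$, since $(3)\Rightarrow(1)$ is where the topological rigidity of translatable surfaces enters, while the other two implications are constructive in nature.

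\medskip

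\noindent\textbf{The implication $(1)\Rightarrow(2)$.} Assume $S$ is translatable. One direction is \cref{thm:realization-isometries-doubly-pointed}: any translation structure on a surface with doubly-pointed end space has virtually cyclic isometry group, so if $\Isom(M)\cong G$ then $G$ is virtually cyclic. For the converse, given a virtually cyclic group $G$, I would construct a translation surface $M$ homeomorphic to $S$ with $\Isom(M)\cong G$. A virtually cyclic group is either finite or has a finite-index infinite cyclic subgroup; the strategy is to build $M$ out of a bi-infinite chain of ``blocks'' indexed by $\Z$, exploiting translatability to realize the $\Z$-translation of the chain as an honest translation isometry of $M$, while decorating a fundamental block with a finite ``gadget'' (in the spirit of the constructions realizing finite groups, cf.\ the proof of part (3) of \cref{thm:main}) whose internal symmetry group is the finite part. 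Care must be taken that the decoration is rigid: every isometry of $M$ must preserve the chain structure, shift it by some integer, and restrict to an automorphism of the gadget. This is the technical heart of this implication, and it parallels the self-similar construction but with the self-similar ``absorption'' replaced by the periodic $\Z$-structure afforded by translatability.

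\medskip

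\noindent\textbf{The implication $(2)\Rightarrow(3)$.} This is immediate: the infinite cyclic group $\Z$ is virtually cyclic, so by $(2)$ there is a translation structure $M$ on $S$ with $\Isom(M)\cong\Z$, and such an $M$ evidently has an element of infinite order.

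\medskip

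\noindent\textbf{The implication $(3)\Rightarrow(1)$ --- the main obstacle.} Suppose $M$ is a translation structure on $S$ admitting an isometry $\varphi$ of infinite order. An isometry of a translation surface is, up to precomposition with a fixed linear map in $O(2,\R)$, locally a translation; iterating, the orbit $\{\varphi^n(x)\}$ of a point must leave every compact set (otherwise, by a limiting/equicontinuity argument on the flat metric and discreteness of the singularity set of infinite cone angle behaviour, $\varphi$ would have finite order or fix a compact region, contradicting infinite order together with the structure of isometry groups of translation surfaces). The key geometric input is that $\varphi$ acts on the space of ends $E(M)=E(S)$ as a homeomorphism of infinite order. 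I would then show this forces $E(S)$ to be doubly-pointed \emph{and} translatable: the two ends fixed (or swapped, after passing to $\varphi^2$) by the dynamics of $\varphi$ on $E(S)$ are the two ``poles''; the infinite-order action shuttling between them, together with $E^g(S)=E(S)$ being respected, gives a $\Z$-periodic exhaustion of $S$ by the $\varphi$-translates of a compact subsurface, which is precisely a witness that $S$ is translatable in the sense of Schaffer-Cohen. The delicate point --- and the step I expect to be the main obstacle --- is ruling out the possibility that $\varphi$ has infinite order but acts with bounded orbits on $E(S)$, e.g.\ a ``rotation-like'' isometry; here I would invoke that $M$ is a genuine flat (translation) surface, so an isometry fixing the end space pointwise and fixing a point would be determined by its derivative and hence have finite order, while the infinite-genus / infinite-area features force any bounded-orbit isometry to be finite order. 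Combining these, infinite order of $\varphi$ propagates to an unbounded $\Z$-action on both $S$ and $E(S)$, yielding translatability and completing the cycle.
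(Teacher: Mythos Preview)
Your cycle $(1)\Rightarrow(2)\Rightarrow(3)\Rightarrow(1)$ is logically equivalent to the paper's $(2)\Rightarrow(1)$, $(1)\Rightarrow(3)$, $(3)\Rightarrow(2)$, and your $(2)\Rightarrow(3)$ is indeed trivial. But two of your implications have genuine gaps.

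\textbf{The finite case of $(1)\Rightarrow(2)$.} Your construction builds a bi-infinite $\Z$-chain of identical decorated blocks and explicitly makes the $\Z$-shift an isometry. When $G$ is \emph{finite}, this is fatal: the shift gives an infinite-order element of $\Isom(M)$, so $\Isom(M)\not\cong G$. The paper (which carries out this construction under the heading $(3)\Rightarrow(2)$) separates into two cases. For infinite $G$ it does not use a $\Z$-chain at all but rather the Cayley graph of $G$ itself as blueprint, which has two ends because $G$ is virtually cyclic; these two graph-ends become $e_\pm$. For finite $G$ it still needs a $\Z$-chain of blocks $\{M_n\}_{n\in\Z}$ to recover the correct topology of $S$, but it deliberately makes the $M_n$ pairwise non-isometric (each carries a single conical singularity of total angle $2\pi c_n$ with the $c_n$ distinct), so that no shift can be an isometry and only the $G$-action on each $M_n$ survives. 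Your sketch misses this symmetry-breaking step.

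\textbf{Misplaced obstacle in $(3)\Rightarrow(1)$.} You frame the difficulty as ruling out that $\varphi$ ``acts with bounded orbits on $E(S)$'' and appeal to infinite-area or genus features. This is both the wrong worry and the wrong tool. First, $\varphi$ acting with finite order on $E(S)$ is perfectly compatible with the conclusion (for Jacob's ladder the end space has two points), so this is not what you need to exclude. Second, you are not assuming $E^g(S)=E(S)$, only $g(S)>0$. The clean input, which you circle around but never name, is that $\Isom(M)$ acts \emph{properly discontinuously} on any infinite-type translation surface (\cref{cor:infinite_type_properly_discontinuously}); this alone forces $\{\varphi^n(\alpha)\}_{n\in\Z}$ to leave every compact set for any closed curve $\alpha$. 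The paper then argues directly on $S$, not on $E(S)$: after passing to a power so that $\varphi$ fixes each of $e_\pm$ and $\varphi(\alpha)\cap\alpha=\emptyset$, one takes a curve $\alpha$ separating $e_+$ from $e_-$, lets $S'$ be the region between $\alpha$ and $\varphi(\alpha)$, checks $S=\bigcup_n \varphi^n(\overline{S'})$, and reads off $\varphi^n(\alpha)\to e_\pm$ from this tiling. That is the fundamental-domain argument you gesture at, but it should be the whole story rather than a coda to an end-space analysis.
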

	
	Finally, in the case when $S$ has a non-displaceable subsurface of finite type,
	we prove that the isometry group itself is necessarily finite.
	
	\begin{theorem}
		\label{thm:non-displaceable-implies-finite-isometries}
		Let $S$ be an infinite-type surface with a non-displaceable subsurface of
		finite type. Then, for any translation structure $M$ on $S$, we have that
		$\Isom(M)$ is finite.
	\end{theorem}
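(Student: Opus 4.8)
The plan is to exploit the fact that a non-displaceable finite-type subsurface $\Sigma \subset S$ must be preserved (up to isotopy) by \emph{every} mapping class of $S$, and in particular by every isometry of any translation structure $M$ on $S$, and then to observe that the group of isometries preserving such a $\Sigma$ acts on a space that forces finiteness. First I would recall the definition of non-displaceability: a finite-type subsurface $\Sigma$ is non-displaceable if for every homeomorphism $\varphi$ of $S$, we have $\varphi(\Sigma) \cap \Sigma \neq \emptyset$ (more precisely, $\varphi(\Sigma)$ cannot be isotoped off $\Sigma$). I would want to upgrade this to the statement that the isotopy class of $\Sigma$ is actually \emph{invariant} under the isometry group; this is a standard move, since a non-displaceable subsurface of minimal complexity in its ``commensurability class'' is fixed, and in any case one can pass to a canonical non-displaceable subsurface — e.g. intersect over the (finite, up to isotopy) orbit, or take the subsurface filled by all non-displaceable curves of bounded complexity — to get a genuinely $\Isom(M)$-invariant finite-type subsurface $\Sigma_0$. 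The key point I need is: there is a finite-type subsurface whose isotopy class is fixed by all of $\Isom(M)$.

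Next, fix a representative of $\Sigma_0$ that is a \emph{geometric} subsurface of $M$, say with geodesic or piecewise-geodesic boundary, or more robustly: since $\Isom(M)$ is a group of isometries of the flat metric, I would argue that the isometry group must preserve the \emph{metric completion data} near the ends and the finite set of cone points, and that $\Isom(M)$ embeds into the isometry group of a compact core. Concretely: an isometry $g$ of $M$ permutes the cone points (singularities with cone angle $> 2\pi$), which form a discrete set, and since $g$ must send a boundary-parallel region of $\Sigma_0$ to another such region, the restriction of $g$ to a suitable compact piece is determined by finite combinatorial data. The cleanest route: choose $\Sigma_0$ large enough to contain at least one cone point or a non-cylindrical piece (possible because $S$ has infinite type, so after enlarging, $\Sigma_0$ has negative Euler characteristic and contains either positive genus or enough ends that any translation structure on it has a cone point, unless it is an annulus — but an invariant annulus already constrains $\Isom(M)$ to be virtually cyclic at worst, and I can rule this sub-case out separately or absorb it). Then every isometry fixes the isotopy class of $\Sigma_0$, hence (being an isometry, after adjusting by the finitely many homeomorphisms realizing the isotopy) it acts on the finite set of cone points of $M$ lying in $\Sigma_0$, and an isometry of a connected translation surface fixing $\Sigma_0$ is determined by its action on cone points together with a derivative in $\GL^+(2,\R)$; but the derivative is forced to lie in $\SO(2,\R)$ (isometries of flat metrics are rotations on charts) and moreover must fix the invariant subsurface, which — having finite type and being non-displaceable, so in particular not an annulus and not a disk — carries a finite nonempty set of cone points whose configuration pins down the rotation to a finite subgroup of $\SO(2,\R)$. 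Hence $\Isom(M)$ injects into (finite permutations of cone points in $\Sigma_0$) $\times$ (finite rotation group), and is therefore finite.

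The main obstacle I anticipate is the first step: passing from ``non-displaceable'' (an intersection/isotopy condition) to ``$\Isom(M)$-invariant up to isotopy,'' and making sure the enlargement to $\Sigma_0$ keeps non-displaceability while gaining the geometric features (a cone point, or non-annular type) needed to finitize the rotation part. I would handle this by the following lemma-level argument: the set of isotopy classes of non-displaceable finite-type subsurfaces of complexity at most $N$ is finite and $\MCG(S)$-invariant (because $\MCG(S)$ acts on isotopy classes of subsurfaces and preserves both complexity and the non-displaceability property), so for each $N$ the union of these classes is represented by a single (not necessarily connected) $\MCG(S)$-invariant subsurface; taking $N$ large enough that this subsurface is nonempty and has a component $\Sigma_0$ that is neither a disk nor an annulus, $\Sigma_0$ is what we want, and $\Isom(M) \le \MCG(S)$ fixes its isotopy class. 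A secondary technical point is the realization of the fixed isotopy class by an actual isometry-invariant subset of $M$: here I would invoke that a finite group (a priori $\Isom(M)$ need not be finite, but any finitely generated subgroup is discrete since it preserves the discrete cone-point set with bounded derivative) — more simply, I would note that $\Isom(M)$ preserves the flat metric hence the cone points, and the convex hull (in the flat metric) of the cone points in $\Sigma_0$ together with a collar is an $\Isom(M)$-invariant compact set $K$, so $\Isom(M)$ acts by isometries on the compact flat surface-with-boundary $K$, and $\Isom(K)$ is finite because $K$ has finitely many cone points and nonempty boundary (or a cone point) rigidifying the rotational part. This last claim — $\Isom$ of a compact translation surface-with-boundary with at least one cone point is finite — is classical and I would cite or quickly prove it: any such isometry permutes finitely many cone points and has derivative in a finite cyclic subgroup of $\SO(2,\R)$ determined by the cone angle at a fixed cone point, so the whole group is finite, completing the proof.
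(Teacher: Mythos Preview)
Your approach has genuine gaps, and it misses a much simpler argument that the paper uses.

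The paper's proof is two lines: since $M$ is an infinite-type translation surface, $\Isom(M)$ acts properly discontinuously on $M$ (this follows from uniformization --- the only Riemann surfaces whose automorphism group fails to act properly discontinuously are the sphere, plane, disc, annuli, punctured plane, and tori). Now take a \emph{compact} non-displaceable subsurface $K$. By definition of non-displaceability, $gK \cap K \neq \emptyset$ for every $g \in \Isom(M)$; proper discontinuity then immediately gives $|\Isom(M)| < \infty$. No upgrade to an invariant subsurface, no cone-point analysis, no rotation-rigidity argument is needed.

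Your route, by contrast, rests on claims that are either false or unjustified. The assertion that ``the set of isotopy classes of non-displaceable finite-type subsurfaces of complexity at most $N$ is finite'' is wrong in general: for instance, if $S$ has three ends of pairwise distinct types, every pair of pants separating the three ends is non-displaceable, and there are infinitely many isotopy classes of such pants. So your mechanism for producing an $\Isom(M)$--invariant isotopy class collapses. Even granting an invariant isotopy class, passing to a genuinely $\Isom(M)$--invariant \emph{subset} of $M$ is delicate (you cannot average over $\Isom(M)$ before you know it is finite), and your cone-point argument is fragile: an arbitrary translation structure on $S$ need not have any cone points inside your chosen $\Sigma_0$, so the ``finite permutation of cone points $\times$ finite rotation group'' bound has nothing to grab onto. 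Finally, the inclusion ``$\Isom(M) \le \MCG(S)$'' is not automatic; you only have a homomorphism, and its injectivity already requires something like proper discontinuity. Once you invoke proper discontinuity, the direct argument above finishes immediately.
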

	
	Assuming that all ends are accumulated by genus, we can realize any finite
	group as an isometry group, regardless of the underlying topology.
	
	\begin{theorem}
		\label{thm:realization-finite-groups}
		Let $S$ be an infinite-type surface such that $E(S)=E^g(S)$ and $G$ be any
		finite group. Then there exists a translation structure $M$ on $S$ such that
		$\Isom(M)$ is isomorphic to~$G$.
	\end{theorem}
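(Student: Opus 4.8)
The plan is to realize any finite group $G$ as $\Isom(M)$ for a translation surface $M$ homeomorphic to a given infinite-type surface $S$ with $E(S) = E^g(S)$, by a direct geometric construction. The starting point is a classical fact: every finite group $G$ embeds into some symmetric group $\mathrm{Sym}(n)$, and $\mathrm{Sym}(n)$ acts faithfully by permuting the $n$ coordinate directions of a suitable Euclidean object; more concretely, $G$ acts freely on a finite set $X = G$ (by left translation), and this action is what I would geometrize. First I would build a ``basic piece'' $P$: a compact translation surface with boundary, carrying a marked boundary circle and some positive genus, chosen so that its translation automorphism group is trivial (a generic polygon identification achieves this — rigidity of translation structures makes trivial automorphism group the generic situation). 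Then I would take $|G|$ disjoint copies $\{P_g\}_{g \in G}$ of $P$ and glue them in a $G$-symmetric pattern to a central hub, so that $G$ acts on the resulting surface by permuting the copies $P_g \mapsto P_{hg}$, with the hub chosen $G$-symmetrically (e.g. a translation surface assembled from $|G|$ congruent sectors meeting along a vertex, realizing the regular representation).

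The key steps, in order: (1) Fix an exhaustion of $S$ by finite-type subsurfaces and use the Ker\'ekj\'art\'o–Richards classification together with $E(S) = E^g(S)$ to reduce to producing a translation surface with the correct genus ($\infty$ or a prescribed finite value) and the correct end space, where all ends are accumulated by genus. (2) Construct the central $G$-symmetric hub $H$ as a translation surface with $|G|$ boundary components permuted freely and transitively by a $G$-action realized by translation isometries — for instance, glue $|G|$ copies of a half-open cylinder cyclically is too weak for general $G$, so instead use the ``flower'' construction: a single branched cover of a torus, or an explicit polygon with $|G|$-fold combinatorial symmetry. (3) Into the $g$-th boundary component, glue a translation surface $N$ homeomorphic to the ``infinite part'' we want (Loch Ness monster–type, with the prescribed ends accumulated by genus), built so that $N$ itself has trivial translation automorphism group and no isometry of $N$ extends over the gluing except the identity; use the results established earlier in the paper (or standard infinite-genus constructions) to get such an $N$ homeomorphic to the right model. (4) Verify that the resulting $M$ is homeomorphic to $S$ — this is where the hypothesis $E(S) = E^g(S)$ is used, since we need every end of the central hub's neighborhood to be swallowed into the infinite-genus pieces and no stray planar ends to appear. (5) Compute $\Isom(M)$: every isometry of a translation surface is affine, hence locally a composition of a translation and a linear map in $\GL^+(2,\R)$; since $M$ has finite area pieces of definite shape and infinitely many, the linear part must be trivial (any $1 \neq A \in \GL^+(2,\R)$ would have to preserve the set of, say, shortest saddle connections, forcing $A$ of finite order, and then a volume/combinatorial argument rules it out), so every isometry is a translation automorphism; such an automorphism must permute the $P_g$'s respecting the combinatorics, and by the triviality of $\Aut(P)$ and of $\Aut(N)$ it is determined by the induced permutation of $G$, which must commute with the gluing data, yielding exactly $G$.

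The main obstacle I expect is step (5), specifically pinning down $\Isom(M)$ \emph{exactly} equal to $G$ rather than merely containing $G$: the construction easily gives a $G$-action, but one must rule out (a) extra linear parts — handled by a rigidity/compactness argument using that translation surfaces of finite area have discrete sets of saddle connections of each length, so an infinite-order or exotic linear symmetry cannot preserve the geometry of the rigid pieces; and (b) extra permutation symmetries of the assembled picture — handled by making all the ``decorations'' asymmetric (trivial $\Aut(P)$, trivial $\Aut(N)$, and a hub whose only symmetries are the prescribed $G$). A secondary technical point is engineering the central hub so that its symmetry group is \emph{precisely} $G$ acting by the regular representation and realizable by honest translation isometries; the cleanest route is probably to take the Cayley graph of $G$ (with respect to some generating set), thicken each edge to a flat parallelogram-cylinder with direction depending on the generator and each vertex to a flat polygon, so that $G$ acts by left multiplication preserving the translation structure, and then attach the infinite-genus rigid pieces at the vertices. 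Everything else — the homeomorphism type bookkeeping and the affineness of isometries — is routine given the earlier results and standard translation-surface theory.
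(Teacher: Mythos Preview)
Your overall architecture --- Cayley graph blueprint, vertex pieces with trivial automorphism group, and a rigidity argument via distinguished saddle connections to pin down $\Isom(M)$ exactly --- matches the paper's approach. The genuine gap is in step~(4), the end-space bookkeeping. In your construction $G$ permutes the $|G|$ copies of the infinite piece $N$ freely, so $G$ acts \emph{non-trivially} on $E(M)$, and $E(M)$ is (up to the compact hub) a disjoint union $\bigsqcup_{g\in G} E(N)$. For $M\cong S$ you would need $E(S)\cong \bigsqcup_{g\in G} E(N)$ for some space $E(N)$, and this is false in general: take $E(S)=E^g(S)$ a three-point set and $|G|=2$. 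The hypothesis $E(S)=E^g(S)$ is irrelevant here; the obstruction is that an arbitrary end space need not be ``divisible by $|G|$''. Your construction works when $E(S)$ is self-similar, but the theorem covers all end spaces, including those with non-displaceable subsurfaces.

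The paper's fix is to make every vertex surface $M_g$ carry the \emph{entire} end space $E(S)$ (built by cutting $\mathbb{S}^2\setminus E$ into one-ended planar pieces and realizing each as a slit plane), and then to perform the Cayley-graph gluings via \emph{infinitely many} slits in every such plane, so that the glued slits accumulate to a dense set of ends. This forces $G$ to act \emph{trivially} on $E(M)$: for any end $e$ and any $g\in G$, there are slit-gluings between the $e$-neighbourhoods of $M_{\Id}$ and $M_g$ arbitrarily close to $e$, so $e$ and $g\cdot e$ coincide in $E(M)$. One then checks that $M/G$ is homeomorphic to $S$, and since the induced map $E(M)\to E(M/G)$ is a bijection (hence a homeomorphism of compact Hausdorff spaces), the classification theorem gives $M\cong S$. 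In short: do not divide $E(S)$ among the $|G|$ vertex surfaces; replicate it in full and merge the copies end-by-end.
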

	
	As with \cref{thm:realization-isometries-selfsimilar}, there are
	two possible approaches for the proof of \cref{thm:realization-finite-groups}.
	We focus in \cref{ssec:proof-realization-finite-groups} on presenting a
	constructive proof because its content can be extended to obtain similar results
	for Veech groups.
	
	Before switching to Veech groups, let us consider the special case when the end
	space is \emph{countable}. Then there exists a countable ordinal $\alpha\geq 0$ and a positive integer
  $n$ such that the end space $E(S)$ is  determined, up to homeomorphism, by the ordered pair $(\alpha,n)$ which is known as the \emph{characteristic system} of $E(S)$,	see~\cref{sec:topo-preliminaries} for more details. In this case, we
	obtain the following result, which is the translation surface analogue
	of~\cite{APV}*{Theorem~4.18}.
	
	\begin{theorem}
		\label{thm:countable-case}
		Let $S$ be an infinite-type surface such that $E(S)=E^g(S)$ is countable with
		characteristic system $(\alpha,n)$, and let $G$ be an arbitrary group.
		\begin{enumerate}
			\item If $n=1$ there exists a translation structure $M$ on $S$ such that
			$\Isom(M)$ is isomorphic to $G$ if and only if $G$ is \emph{countable}.
			\item If $n=2$ and $\alpha$ is a \emph{successor} ordinal or zero, there exists a
			translation structure $M$ on $S$ such that $\Isom(M)$ is isomorphic to $G$ if and only if $G$ is \emph{virtually cyclic}.
			\item If $n\geq 3$, or $n=2$ and $\alpha$ is a \emph{limit} ordinal, there
			exists a translation structure $M$ on $S$ such that $\Isom(M)$ is isomorphic to $G$ if and only if $G$ is \emph{finite}.
		\end{enumerate}
	\end{theorem}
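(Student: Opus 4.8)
The plan is to deduce the statement from \cref{thm:main} (equivalently, from the finer results \cref{thm:realization-isometries-selfsimilar,thm:realization-isometries-doubly-pointed,thm:realization-isometries-translatable,thm:non-displaceable-implies-finite-isometries,thm:realization-finite-groups}) by a purely topological identification of which class of the trichotomy the surface $S$ falls into, expressed in terms of its characteristic system $(\alpha,n)$. Throughout, note that $E(S)=E^g(S)\ne\emptyset$ forces $g(S)=\infty$, so all of those theorems are available; and we may assume $\alpha\ge 1$, i.e.\ $E(S)$ is infinite (the finite end-space cases being governed by \cref{thm:main} directly). By the Mazurkiewicz--Sierpi\'nski normal form, $E(S)\cong\omega^\alpha\cdot n+1$ decomposes into $n$ clopen copies of $\omega^\alpha+1$, and hence has exactly $n$ points of maximal Cantor--Bendixson rank $\alpha$, say $e_1,\dots,e_n$.

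If $n=1$, I would first check that $E(S)=\omega^\alpha+1$ is self-similar: in any finite clopen partition, the part $U$ containing the top point $e_1=\omega^\alpha$ contains a basic neighbourhood $(\gamma,\omega^\alpha]$ with $\gamma<\omega^\alpha$, and since $\omega^\alpha$ is additively indecomposable (here $\alpha\ge 1$ is used) one has $(\gamma,\omega^\alpha]\cong\omega^\alpha+1$; thus $U$ contains a clopen copy of $E(S)$. As recalled after \cref{thm:realization-isometries-selfsimilar}, an infinite-genus surface with self-similar end space has no non-displaceable finite-type subsurface, so \cref{thm:realization-isometries-selfsimilar} applies; combined with \cref{thm:realization-finite-groups} for finite $G$, this yields statement (1).

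If $n=2$ and $\alpha=\beta+1$ is a successor, then $E(S)\cong(\omega^\alpha+1)\sqcup(\omega^\alpha+1)$ has exactly two maximal-rank points, hence is doubly-pointed (\cref{def:doubly pointed}). I would show $S$ is translatable: writing $[0,\omega^{\beta+1})=B_0\sqcup B_1\sqcup\cdots$ with $B_0=[0,\omega^\beta]$ and $B_j=(\omega^\beta j,\omega^\beta(j+1)]\cong\omega^\beta+1$, each $B_j$ compact clopen, and doing the same on the other half, one exhibits $E(S)$ as the two-point compactification of a $\Z$-indexed disjoint union of copies of $\omega^\beta+1$; by Ker\'ekj\'art\'o--Richards, $S$ is then realized as $\bigsqcup_{k\in\Z}\Sigma_k$ glued in a line along single separating curves, where each $\Sigma_k$ has infinite genus, end set homeomorphic to $\omega^\beta+1$, and genus accumulating to every end and towards both of its boundary curves, so that the shift $\Sigma_k\mapsto\Sigma_{k+1}$ is a homeomorphism of $S$ and $E^g(S)=E(S)$. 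Hence $S$ is translatable and \cref{thm:realization-isometries-translatable} gives statement (2).

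The main obstacle lies in the remaining cases. If $n=2$ with $\alpha$ a limit ordinal, I claim $S$ is \emph{not} translatable: if $S=\bigsqcup_{k\in\Z}\Sigma_k$ were a shift-invariant decomposition, the sets $E(\Sigma_k)$ would be clopen in $E(S)$, pairwise homeomorphic, and would miss the maximal-rank ends $e_1,e_2$, so their common Cantor--Bendixson rank $\gamma$ satisfies $\gamma<\alpha$; but $e_1$, being a limit of the $\Sigma_k$ on one side, has points of rank $\ge\delta$ (other than itself) in every neighbourhood for each $\delta<\alpha$, so each $E(\Sigma_k)$ contains a point of rank $\ge\delta$ for every $\delta<\alpha$, forcing $\gamma\ge\sup_{\delta<\alpha}\delta=\alpha$ --- a contradiction. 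Since $S$ is doubly-pointed but not translatable, \cref{thm:realization-isometries-doubly-pointed,thm:realization-isometries-translatable} force every $\Isom(M)$ to be virtually cyclic and torsion, hence finite, while \cref{thm:realization-finite-groups} realizes every finite group; this gives statement (3). Finally, if $n\ge 3$, partition $E(S)=P_1\sqcup\cdots\sqcup P_n$ into clopen copies of $\omega^\alpha+1$ with $e_j\in P_j$, and realize (again by Ker\'ekj\'art\'o--Richards) a genus-zero $n$-holed subsurface $\Sigma$ with $S\setminus\Sigma=V_1\sqcup\cdots\sqcup V_n$ and $E(V_j)=P_j$; then $\Sigma$ is non-displaceable, since if $\varphi(\Sigma)\cap\Sigma=\emptyset$ then $\varphi(\Sigma)\subseteq V_i$ for some $i$ (connectedness), so the connected set $S\setminus V_i$ lies in one component of $S\setminus\varphi(\Sigma)$, putting at least $n-1\ge 2$ of the maximal-rank ends $\{\varphi(e_1),\dots,\varphi(e_n)\}=\{e_1,\dots,e_n\}$ into a single component --- contradicting that $\varphi(\Sigma)$ must pairwise separate them. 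Then \cref{thm:non-displaceable-implies-finite-isometries,thm:realization-finite-groups} give statement (3). I expect the ordinal bookkeeping and the Ker\'ekj\'art\'o--Richards realizations to be routine, with the Cantor--Bendixson rank dichotomy for $n=2$ being the one genuinely delicate point.
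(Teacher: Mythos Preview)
Your proposal is correct and follows essentially the same strategy as the paper: reduce each case to the trichotomy theorems by identifying, in terms of the characteristic system $(\alpha,n)$, whether $S$ has self-similar end space, is translatable with doubly-pointed end space, or falls into the third class. The paper's proof is much terser because it outsources the topological identifications: for (2) and the $n=2$ limit case of (3) it invokes \cref{lem:translatable_countable_case} (whose two directions are exactly your successor/limit arguments), and for $n\ge 3$ it cites \cite{APV}*{Lemma~4.1} for the existence of a non-displaceable finite-type subsurface, whereas you construct one directly. Your ordinal arguments and the Ker\'ekj\'art\'o--Richards realizations are correct; in particular your Cantor--Bendixson rank contradiction for the $n=2$ limit case is precisely the content of the ``only if'' direction of \cref{lem:translatable_countable_case}.
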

	
	\begin{remark}
		Let $M$ be a translation surface. We denote by $\Trans(M) \le \Isom(M)$ the
		group of translations of $M$, which are isometries that, in local coordinates,
		are translations. Our proofs  of \crefrange{thm:main}{thm:countable-case} are
		constructive. Notably, each translation structure $M$ we construct in our proofs
		not only realizes the abstract group in question as its isometry group, but also
		possesses the property $\Isom(M)=\Trans(M)$. This fits in line with a recent result
		by Hidalgo and Morales in~\cite{HidalgoMorales}, who showed that every countable
		group can be realized as the group of automorphisms of some origami\,\footnote{Also
			called square-tiled surface.}, which, if the group is infinite, is homeomorphic
		to the Loch Ness monster. Note that for~\cite{HidalgoMorales}, an automorphism is a translation which respects the tessellation by squares of the origami.
	\end{remark}
		
	We now turn our attention to Veech groups. On a translation surface $M$, we
	denote by $\Aff(M)$ the group of $\R$--affine orientation-preserving
	homeomorphisms of $M$. Since the transition functions are translations, any
	such affine homeomorphism has a constant derivative, which is an element of
	$\GL^+(2,\R)$. By definition, the \emph{Veech group} $\Gamma(M)$ of $M$ is the group of
	derivatives of affine homeomorphisms. The relation between the group of
	translations and the Veech group can be
	summarized by the following short exact sequence
	\[
	1 \to \Trans(M) \to \Aff(M) \to \Gamma(M) \to 1.
	\]
	Moreover, we recall that $\Trans(M) \le \Isom(M)$, see \cref{sec:transsurf-preliminaries} for
	more details.
	
	The problem of which groups can be Veech groups is still open even for compact
	translation surfaces, see \cref{sec:SymmetriesCompact}, and seems to be harder
	than the isometry groups. Part of the difficulty in studying Veech groups comes
	from the fact that these groups need not act properly discontinuously
	on the surface. Our first result for Veech groups is similar to
	\cref{thm:realization-isometries-selfsimilar}, discussing which groups can
	appear for surfaces which have self-similar end space.
	
	\begin{theorem}
		\label{thm:realization-Veech-selfsimilar}
		Let $S$ be an infinite-type surface such that $g(S)>0$, $E(S)$ is
		self-similar, and every subsurface $S'\subset S$ of finite type is displaceable. Let
		$G$ be any \emph{countable} subgroup of $\GL^+(2,\mathbb{R})$. Then there exists a
		translation structure $M$ on $S$ such that $\Aff(M)\to \Gamma(M)$ has trivial
		kernel and $\Gamma(M)$ is isomorphic to $G$.
	\end{theorem}
	
	Moreover, we show that finite groups can always be realized, independent of
	the topology of $S$, similar to \cref{thm:realization-finite-groups}.
	
	\begin{theorem}
		\label{thm:realization-finite-Veech-groups}
		Let $S$ be an infinite-type surface such that $E(S)=E^g(S)$ and let $G<\GL^+(2,\R)$
		be a \emph{finite} group. Then there exists a translation structure $M$ on $S$ such
		that $\Aff(M)\to \Gamma(M)$ has trivial kernel and~$\Gamma(M)$ is isomorphic to
		$G$.
	\end{theorem}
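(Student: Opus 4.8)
The plan is to reduce the realization of a finite group $G < \GL^+(2,\mathbb{R})$ as a Veech group (with trivial translation kernel) to the realization of $G$ as an isometry group, which is already handled by \cref{thm:realization-finite-groups}. The crucial observation is that a finite subgroup of $\GL^+(2,\mathbb{R})$ is conjugate to a finite subgroup of $\SO(2,\mathbb{R})$: indeed, any compact subgroup of $\GL^+(2,\mathbb{R})$ is conjugate into the maximal compact subgroup $\SO(2,\mathbb{R})$, by averaging an inner product over $G$ to produce a $G$-invariant one. Since conjugating a translation surface by an element $A \in \GL^+(2,\mathbb{R})$ conjugates its Veech group by $A$ and its affine group isomorphically, it suffices to realize an arbitrary finite $G < \SO(2,\mathbb{R})$, i.e.\ a finite cyclic group of rotations.

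First I would record that for a rotation $R_\theta \in \SO(2,\mathbb{R})$, an affine automorphism of a translation surface $M$ with derivative $R_\theta$ is precisely an \emph{isometry} of the flat metric that is not a translation but induces the rotation $R_\theta$ on every chart; conversely every orientation-preserving isometry of a translation surface is affine with derivative in $\SO(2,\mathbb{R})$. Hence for a translation surface $M$ one has $\Isom(M) = \Aff(M) \cap \{\text{derivative in }\SO(2,\mathbb{R})\}$, and whenever $\Gamma(M) \subseteq \SO(2,\mathbb{R})$ the exact sequence $1 \to \Trans(M) \to \Aff(M) \to \Gamma(M) \to 1$ coincides with $1 \to \Trans(M) \to \Isom(M) \to \Gamma(M) \to 1$. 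Therefore, if I can produce a translation structure $M$ on $S$ with $\Isom(M) \cong G$, with $\Isom(M) = \Trans(M)$ ruled \emph{out}—wait, rather the opposite: I want the isometry group to act by nontrivial rotations, with trivial translation subgroup. So the target is a translation structure $M$ on $S$ such that $\Isom(M) \cong G$ and $\Trans(M)$ is trivial and every nontrivial isometry has derivative a nontrivial rotation; then $\Gamma(M) = \Isom(M)/\Trans(M) \cong G$ and $\Aff(M) \to \Gamma(M)$ has trivial kernel.

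To build such an $M$, I would revisit the construction behind \cref{thm:realization-finite-groups}. In that construction one realizes the finite group $G$ of order $m$ as a group of \emph{isometries} of a translation surface homeomorphic to $S$; the natural way to do this is to take $m$ copies of a suitable ``building block'' surface carrying the right topology, arrange them cyclically symmetric in the plane, and glue so that $G = \mathbb{Z}/m\mathbb{Z}$ acts by rotating the copies into one another—precisely an action by rotations by multiples of $2\pi/m$, realizing $G$ inside $\SO(2,\mathbb{R})$ rather than inside the translations. One must check three things: (i) the action is free of translations, i.e.\ no nontrivial element acts as a translation, which follows because each nontrivial element has nontrivial rotational derivative by construction; (ii) $G$ is the \emph{full} affine group, for which I would choose the building block with enough aperiodic geometric decoration (e.g.\ singularities of distinct cone angles placed aperiodically, following the techniques used for \cref{thm:realization-finite-groups}) so that any affine automorphism must permute the $m$ blocks respecting the decoration and hence lies in the cyclic group of rotations; and (iii) the resulting surface is homeomorphic to $S$, for which the hypothesis $E(S) = E^g(S)$ lets us distribute genus and ends symmetrically among the $m$ blocks using the Ker\'ekj\'art\'o–Richards classification, exactly as in \cref{thm:realization-finite-groups}. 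Finally, conjugating this $M$ by the fixed $A \in \GL^+(2,\mathbb{R})$ with $A^{-1} G A \subseteq \SO(2,\mathbb{R})$ gives a translation structure on $S$ whose Veech group is the original $G < \GL^+(2,\mathbb{R})$, with $\Aff \to \Gamma$ injective.

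The main obstacle is step (ii): ensuring that the constructed affine group is not strictly larger than $G$. Unlike for isometry groups, affine automorphisms can distort the flat metric (shear or scale), so ruling out ``extra'' affine symmetries requires the geometric decoration of the building blocks to be rigid under \emph{all} of $\GL^+(2,\mathbb{R})$, not merely under $\SO(2,\mathbb{R})$; concretely, one wants the set of cone points and their cone angles (which are affine invariants) to admit no affine self-map other than the intended rotations. I expect this to be handled by importing, essentially verbatim, the rigidity gadget already developed in the proof of \cref{thm:realization-finite-groups} (and in \cite{PSV}), since cone angles are preserved by affine maps and a sufficiently generic aperiodic placement of distinct-angle singularities within a fundamental domain admits no affine automorphism fixing that domain. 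Once that rigidity input is in place, the remainder is bookkeeping about ends and genus already carried out for the isometry version.
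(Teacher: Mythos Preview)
Your conjugation reduction to $G \subset \SO(2,\R)$ is correct and is a nice simplification in principle, but the proposal has a genuine gap at the point where you invoke \cref{thm:realization-finite-groups}. You write that in that construction one ``take[s] $m$ copies \dots\ arrange[s] them cyclically symmetric in the plane, and glue[s] so that $G = \Z/m\Z$ acts by rotating the copies into one another.'' That is not what the construction behind \cref{thm:realization-finite-groups} does: there the vertex surfaces $M_g$ are \emph{isometric copies} of $M_{\Id}$ and the $G$--action permutes them by maps that are the identity in local coordinates, so every isometry produced is a \emph{translation}, i.e.\ has derivative $\Id$. (The paper even remarks explicitly that the constructions yield $\Isom(M)=\Trans(M)$.) Thus importing that construction ``essentially verbatim'' gives $\Gamma(M)$ trivial, not $G$, and the rigidity gadget there only pins down isometries, not general affine maps.

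The fix is exactly the one the paper uses, and it works without your preliminary conjugation: replace the identical copies $M_g$ by the \emph{linearly deformed} copies $g\cdot M_{\Id}$ (post-composing charts with $g$), and redesign the slits so that the slit $s$ in $M_g$ that is to be glued to $M_{gh}$ has holonomy vector $g\cdot v$, matched by a slit of holonomy $(gh)\cdot(h^{-1}v)=g\cdot v$ in $M_{gh}$. This makes the natural $G$--action affine with derivative exactly $G$, and the asymmetric--polygon singularity $\sigma_{\Id}$ now rules out extra \emph{affine} maps: any $A\in\Aff(M)$ must permute the unique large-angle singularities $\{\sigma_g\}$, so some $A_h^{-1}\circ A$ fixes $\sigma_{\Id}$ and permutes its self--saddle-connections; genericity of the polygon then forces the derivative (not just its rotational part) to be the identity. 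Your step~(ii) worry is therefore resolved by the same gadget, but only once the vertex surfaces are deformed by $g$; the topological verification that $M\cong S$ is then identical to the isometry case since the gluings are combinatorially the same. In short, your outline becomes correct once you replace ``copies arranged cyclically'' with ``copies $g\cdot M_{\Id}$ and matched slits,'' at which point the conjugation into $\SO(2,\R)$ is no longer needed.
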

	
	We also obtain similar results for virtually cyclic groups for positive-genus translatable surfaces, more precisely: 
	
	\begin{theorem}
		\label{thm:realization-vistually-cyclic-Veech-groups}
		Let $S$ be a translatable surface such that $g(S)>0$ and $E(S)$ is not self-similar. Let $G<\GL^+(2,\R)$ be any virtually cyclic group. Then there exists a translation structure $M$ on $S$ such that  $\Aff(M)\to \Gamma(M)$ has trivial kernel and~$\Gamma(M)$ is isomorphic to $G$.
	\end{theorem}

	Another indication that Veech groups are harder to classify is that they do not
	need to be countable. In fact, we prove the following extension
	of~\cite{Maluendas-Valdez17}*{Theorem~0.4} where the authors only considered
	surfaces without planar ends:
	\begin{theorem}
		\label{thm:uncountable-veech-no-restrictions}
		Let $S$ be an infinite-type surface and
		\begin{equation}
			\label{eq:group-P}
			P \coloneqq \left\{\begin{pmatrix}1 & s\\0 & t\end{pmatrix}: s\in\R
			, t>0\right\} .
		\end{equation}
		Then there exists a translation structure $M$ on $S$ such that $\Gamma(M)=P$.
	\end{theorem}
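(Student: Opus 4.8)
The plan is to build $M$ so that its set of directions in which the surface decomposes nicely, together with the flat geometry, forces the Veech group to be exactly the parabolic-and-scaling subgroup $P$. The subgroup $P$ is the stabilizer in $\GL^+(2,\R)$ of the horizontal direction as an \emph{unoriented} line together with a choice of co-orientation; concretely, $A \in P$ iff $A$ fixes the horizontal foliation and acts on it preserving transverse orientation while allowing arbitrary shear and arbitrary positive vertical scaling. So the first step is to design a translation structure on $S$ with a distinguished horizontal cylinder decomposition that is ``rigid'' enough that every affine automorphism must preserve the horizontal direction (giving $\Gamma(M) \subseteq P$, at least up to the sign issue), yet ``flexible'' enough that every element of $P$ is actually realized by an affine map (giving $\Gamma(M) \supseteq P$).

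For the inclusion $\Gamma(M)\supseteq P$, the natural idea is to take $M$ to be built entirely out of horizontal cylinders glued along their horizontal boundaries — for instance an infinite ``staircase'' or a tree of cylinders, all of the same height, with a combinatorial structure carrying enough symmetry. A matrix $\begin{pmatrix}1 & s\\ 0 & t\end{pmatrix}$ acts on such a structure by shearing each cylinder and scaling heights by $t$; because the derivative is constant and the gluing pattern is preserved, this defines an affine automorphism provided the gluings are chosen compatibly (e.g.\ all circumferences equal, or arranged so that the shear only permutes/translates the gluing intervals). The cleanest route is probably to let $M$ be a single bi-infinite horizontal cylinder of infinite area, or a $\Z$-cover of a one-cylinder surface, with the cylinder subdivided so that $P$ acts by automorphisms; one then only needs to check that $S$, with whatever genus and end space it has, can be given such a structure, which follows from the constructions used for \cref{thm:realization-finite-Veech-groups} by attaching handles/ends inside the cylinders in a $P$-equivariant way (e.g.\ periodically along a $\Z$ action, or self-similarly).

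For the reverse inclusion $\Gamma(M)\subseteq P$ we must rule out any affine map whose derivative moves the horizontal direction. Here the standard tool is that an affine automorphism sends horizontal cylinders to cylinders in the direction $A(\text{horizontal})$, and it must preserve the set of \emph{core curves of maximal-length horizontal cylinders} (or, if all cylinders are infinite, the set of proper horizontal lines / the horizontal foliation as the unique foliation all of whose leaves are complete and non-closed, or some similar canonical feature). If $M$ is engineered so that the horizontal direction is the only direction containing such a distinguished family — e.g.\ the only direction in which there is a bi-infinite cylinder, or in which the decomposition has a prescribed combinatorial type not shared by any other direction — then $A$ must fix the horizontal direction, so $A=\begin{pmatrix}\lambda & s\\0 & t\end{pmatrix}$ with $\lambda,t$ of the same sign (orientation-preserving); normalizing by the fact that $\begin{pmatrix}-1&0\\0&-1\end{pmatrix}$-type elements are excluded if we also break the transverse symmetry, and using that the cylinder circumferences form a set whose only self-similarity is scaling by $1$ (so $\lambda=1$), pins $A$ down to $P$. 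Finally, $\Trans(M)\le\ker(\Aff(M)\to\Gamma(M))$ is automatically trivial or not — the statement only asks for $\Gamma(M)=P$, so we need not control the kernel.

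The main obstacle I anticipate is the $\Gamma(M)\subseteq P$ direction: it is easy to make $P$ act, but hard to make sure \emph{nothing else} does, because infinite-type surfaces have so much room that spurious affine symmetries (in particular hyperbolic or rotational ones, or parabolics in other directions) can sneak in unless the combinatorial/metric data is rigid. The key idea to handle this is to encode, in the widths of the subcylinders or in the placement of the handles and ends, a sequence of real numbers (lengths) that is \emph{aperiodic and has no multiplicative or linear self-similarities except the ones coming from $P$} — for instance lengths $1$ for every gluing so only shears survive, combined with a marking that is preserved only by transverse-orientation-preserving maps — while still being realizable on the prescribed surface $S$. Verifying that such an encoding is simultaneously (i) rigid, (ii) $P$-invariant, and (iii) compatible with an arbitrary end space and genus of $S$ is where the real work lies, and it is plausible the authors reduce (iii) to the earlier constructions by performing a $P$-equivariant connected sum of a ``model'' $P$-surface with the surfaces produced for \cref{thm:realization-finite-Veech-groups}.
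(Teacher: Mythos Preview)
Your high-level plan --- single out the horizontal direction, show $P$ acts and nothing else does --- is correct in spirit, but you are working far harder than the paper, and you have missed the invariant that makes the argument almost immediate.

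The paper does not use cylinders at all. It simply takes $M$ to be the end-grafting surface of \cref{ssec:Maluendas-Randecker} applied to $E^g(S)\subset E(S)$; this already realizes an arbitrary infinite-type $S$, so your worry (iii) about compatibility with the prescribed topology evaporates. By \cref{prop:MR-construction} every saddle connection of that $M$ is horizontal of length~$1$. Since the derivative of any affine automorphism must permute the set of holonomy vectors of saddle connections, and that set is $\{\pm(1,0)\}$, one gets $\Gamma(M)\le P$ in one line (the asymmetry of the construction --- all slits lie on the positive $x$-axis of each plane $P_n$ --- handles the sign). For the inclusion $P\le\Gamma(M)$: every $A\in P$ fixes the line $y=0$ pointwise, so the linear action of $A$ on each plane $P_n$ preserves every slit and glues up to a global affine map with derivative $A$. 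That is the entire proof.

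So the step you identified as the main obstacle, $\Gamma(M)\subseteq P$, is not a delicate rigidity statement about cylinder combinatorics but a two-line consequence of the fact that holonomy vectors of saddle connections form a $\Gamma(M)$-invariant subset of $\R^2$. Your proposed apparatus --- aperiodic encodings of lengths, uniqueness of a cylinder direction, $P$-equivariant connected sums with the surfaces from \cref{thm:realization-finite-Veech-groups} --- is unnecessary. In fact the end-grafting surface has no closed horizontal cylinders (each plane contributes bi-infinite horizontal leaves), so your cylinder-based argument would not even apply to the $M$ the paper uses; you would have to build a different surface from scratch and then actually carry out the rigidity analysis you defer as ``where the real work lies.''
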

	
	\subsection{Symmetries of compact translation surfaces}
	\label[section]{sec:SymmetriesCompact}
	
	A natural question is why we restrict ourselves to surfaces of infinite type. In
	order to make this clear, let us discuss for a moment our guiding question in
	the case of \emph{finite}-type translation surfaces. Since translation surfaces
	are Riemann surfaces, see~\cref{rmk:defs-TS}, the isometry group is a subgroup
	of the group of automorphisms of the Riemann surface. The Hurwitz automorphism
	theorem~\cite{Hurwitz}*{Abschnitt II.7} then implies that the isometry group is
	always finite, with order bounded by $84(g-1)$, where $g\ge 2$ is the genus of
	the surface, see, for example,~\cite{FarbMargalit}*{Theorem~7.4}.
	It is worth mentioning that any finite group occurs as automorphism
	group of a compact translation surface, more precisely of an origami
	(see~\cite{Hidalgo})\footnote{Here, automorphisms are meant as automorphisms of the translation structure again.}.
	However, Schlage-Puchta and Weitze-Schmith\"{u}sen show that the
	group of translations  of a compact translation surface of genus $g$ has at most
	$4g-4$ elements and has exactly $4g-4$ elements if and only if it is a normal origami
	in the principal stratum, see~\cite{SP-WS17} for more details. In particular, the list of finite groups
	which can be realized as automorphism groups of translation surfaces of a fixed genus is exclusive.
	
	Regarding Veech groups, we know that they are always non-cocompact Fuchsian groups,
	see~\cite{HubertSchmidt:Veech}. Typically, they are quite small, and in fact, a
	generic\,\footnote{``Generic'' is here meant with respect to the Masur--Veech
		measure.} surface has trivial Veech group~\cite{Moller:affine}. However, there
	exist exceptional surfaces, called Veech (or lattice) surfaces, whose Veech
	group is a lattice. In this case, the geodesic flow (also called directional
	flow) exhibit special dynamical properties~\cites{Veech,HubertSchmidt:Veech}.
	Moreover, there are examples of Veech groups that are infinitely
	generated~\cite{HubertSchmidt:infinite}.
	
	The question of which Fuchsian groups can be realized as Veech groups is still open and is a
	difficult one. For instance, it is not known whether there exists a Veech group
	which is generated by a single hyperbolic matrix.\footnote{This is called the
		``lonely guy conjecture''.} Moreover, it is not known if there is a
	non-elementary Veech group of the $2^\text{nd}$ kind. For more information on
	Veech groups of compact translation surfaces, see~\cite{Lehnert17}.
	
	\subsection{Previous results on Veech groups of infinite surfaces}
	\label{sec:previousresultsVeech}
	Our results for Veech groups extend previous results from some of the authors
	and their collaborators: Przytycki, Valdez, and Weitze-Schmith\"usen showed
	in~\cite{PSV} that every countable group $G < \GL^+(2,\R)$ can be realized as
	the Veech group of some translation structure on the Loch Ness monster.
	Maluendas and Valdez later expanded this result in~\cite{Maluendas-Valdez17} as
	follows.
	
	Assume that $S$ is an infinite-type surface such that $E(S) = E^g(S)$. If
	\begin{itemize}
		\item $S$ is the blooming Cantor tree, whose end space is a Cantor set,
		see \cref{fig:bloomingcantor}, or
		\item $E(S)$ is homeomorphic to the union of a a Cantor set and a discrete set of points
		accumulating to one point of the Cantor set, or
		\item $E(S)$ is homeomorphic to $\omega^k + 1$ for $k\ge 1$,
	\end{itemize}
	then, as in~\cite{PSV}, one can realize any countable group as the Veech group
	of some translation structure on~$S$. We remark that all these surfaces have
	self-similar end space, and therefore our
	\cref{thm:realization-Veech-selfsimilar} generalizes these results.
	
	\begin{figure}[tb]
		\centering
		\includegraphics[height=.2\textheight]{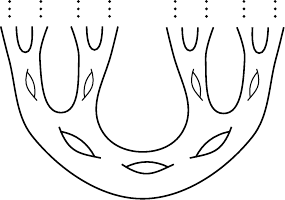}
		\caption{The blooming Cantor tree.}
		\label{fig:bloomingcantor}
	\end{figure}
	
	When considering surfaces whose space of ends is not self-similar, less is known about realizability for Veech groups so far. As one of the few results, Morales and Valdez
	showed the following, using the Hooper--Thurston--Veech construction~\cite{Hooper}.
	
	\begin{theorem}[Morales, Valdez~\cite{Morales_Valdez_20}]
		\label{teo:HTV-construction}
		Let $S$ be an infinite-type surface. Then there exist uncountably many
		half-translation structures $M$ whose Veech group $\Gamma(M)$ contains a free
		group with two generators.
	\end{theorem}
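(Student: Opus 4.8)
The plan is to realize $S$ through the Hooper--Thurston--Veech construction of~\cite{Hooper} and to read off from it two ``independent'' parabolic elements of the Veech group, which will generate a free group of rank two by a ping-pong argument. Recall that this construction associates to a connected bipartite ribbon graph $\mathcal{G} = (V_0 \sqcup V_1, E)$, together with a positive eigenfunction of its (weighted) adjacency operator, a half-translation surface $M(\mathcal{G})$ carrying two transverse cylinder decompositions: horizontal cylinders indexed by $V_0$ and vertical cylinders indexed by $V_1$, with the intersection rectangles indexed by $E$ and glued according to the ribbon structure. Since $M(\mathcal{G})$ deformation retracts onto a thickening of $\mathcal{G}$, its genus, its space of ends, and the subset of ends accumulated by genus are governed by the cycle rank and the behaviour at infinity of $\mathcal{G}$.

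First I would carry out the topological realization. Given the Ker\'ekj\'art\'o--Richards invariants $\bigl(g(S), E(S), E^g(S)\bigr)$, I would build a locally finite bipartite ribbon graph $\mathcal{G}_S$ whose thickening is homeomorphic to $S$: start from a locally finite tree whose space of ends is $E(S)$, insert small bipartite cycles along it --- accumulating densely onto the ends in $E^g(S)$ and staying away from the planar ends --- so as to produce the prescribed genus, subdivide edges to restore bipartiteness, and choose the ribbon structure so that the thickening is orientable. One then verifies $M(\mathcal{G}_S) \cong S$ by matching the two sets of invariants. \textbf{This is the step I expect to be the main obstacle:} one must show that the combinatorics of bipartite ribbon graphs are flexible enough to reproduce \emph{every} triple permitted by the classification of infinite-type surfaces --- in particular all wild end spaces --- and keep careful track of how the HTV gluings turn ends of $\mathcal{G}_S$ into ends of $M(\mathcal{G}_S)$, including whether each such end is accumulated by genus. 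The remaining steps are, by comparison, routine.

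Granting the realization, the Veech group is easy to populate. Choose the edge weights on $\mathcal{G}_S$ so that the weighted adjacency operator has a positive eigenfunction with eigenvalue $\lambda > 2$; this is always possible, since the adjacency spectral radius of an infinite locally finite connected graph is at least $2$ and can be made larger by rescaling the weights, while a positive eigenfunction exists for every eigenvalue that is at least the spectral radius. Feeding this eigenfunction into the construction forces all horizontal cylinders of $M \coloneqq M(\mathcal{G}_S)$ to share one modulus and all vertical cylinders to share another; hence the product of the Dehn twists along all horizontal core curves and the product along all vertical core curves are affine automorphisms $\phi_h, \phi_v \in \Aff(M)$ with derivatives conjugate to
\[
A = \begin{pmatrix} 1 & \lambda \\ 0 & 1 \end{pmatrix}
\qquad \text{and} \qquad
B = \begin{pmatrix} 1 & 0 \\ -\lambda & 1 \end{pmatrix} .
\]
These are parabolic with distinct fixed points on $\partial\mathbb{H}$, namely the horizontal and the vertical direction, and since $\lambda > 2$ the classical ping-pong argument for parabolic elements of $\PSL(2,\R)$ shows that $\langle A, B \rangle \le \Gamma(M)$ is free of rank two. (The borderline value $\lambda = 2$ works as well, or one may simply pass to powers of $A$ and $B$.) Thus $\Gamma(M)$ contains a free group on two generators.

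It remains to produce uncountably many such surfaces on $S$. The homeomorphism type of $M(\mathcal{G}_S)$ depends only on $\mathcal{G}_S$ as a ribbon graph, and the previous paragraph applies to any admissible eigenvalue--eigenfunction pair; as the cone of such data is infinite-dimensional, one obtains a continuum of half-translation structures $\{M_t\}_t$ on $S$, each with a free group of rank two inside $\Gamma(M_t)$. To see that uncountably many of them are genuinely different, I would arrange the rectangle dimensions --- equivalently the lengths of the saddle connections bounding the rectangles --- to vary with $t$ in such a way that each $M_t$ has a saddle-connection length spectrum $L(M_t) \subset \R_{>0}$, which is countable and invariant under isometry, containing $t$ itself. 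If only countably many of the $M_t$ were pairwise non-isometric, then $L(M_t)$ would take only countably many values, so the union of these sets would be countable, contradicting that it contains the interval of parameters $t$; hence uncountably many of the $M_t$ are pairwise non-isometric. (I would deliberately not rest this part of the argument on the trace field of $\Gamma(M_t)$, since by \cref{thm:uncountable-veech-no-restrictions} Veech groups of infinite-type surfaces need not even be countable.) This yields the theorem.
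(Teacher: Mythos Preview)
This theorem is not proved in the paper at all: it is quoted from Morales--Valdez~\cite{Morales_Valdez_20} as part of the survey of prior results in \cref{sec:previousresultsVeech}, and the paper only records that it is obtained ``using the Hooper--Thurston--Veech construction~\cite{Hooper}''. There is therefore no proof here against which to compare your proposal.

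That said, your sketch follows precisely the route the paper attributes to~\cite{Morales_Valdez_20}: realize $S$ as $M(\mathcal{G}_S)$ for a suitable bipartite ribbon graph, and read off two parabolic elements from the horizontal and vertical multitwists. You are right that the substantive work lies in the topological realization step --- showing that the thickening of a bipartite ribbon graph can reproduce an arbitrary triple $(g(S),E(S),E^g(S))$ --- and the remainder (ping-pong for $\lambda\ge 2$, varying the eigenfunction to obtain a continuum of non-isometric structures) is indeed routine. One small caution: your claim that a positive eigenfunction exists for every $\lambda$ at least the spectral radius is true but not entirely trivial for infinite graphs, and in practice one often just exhibits a concrete eigenfunction for the specific $\mathcal{G}_S$ built; this is how the argument is usually packaged.
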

	
	We recall that a \emph{half-translation} structure is similar to a translation structure, but with transition maps (except at singular points) being translations, possibly composed with a rotation by $\pi$.
	
	\subsection{Applications to big mapping class groups} In
	\cref{lemma:isom-into-map} we show that, for any translation structure
	$M$ on a topological surface $S$, the natural map $\Isom(M)\to\Map(S)$, which
	associates to each isometry its mapping class, is an injective homomorphism. As
	a consequence, \cref{thm:realization-isometries-selfsimilar} can be applied to
	extend Corollary 1.4 in~\cite{APV}. More precisely:
	
	\begin{corollary}
		\label{corollary:applications-to-bigmcgs}
		Let $S$ be an infinite-type surface such that $g(S)>0$, $E(S)$ is
		self-similar, and where every subsurface $S'\subset S$ of finite type is
		displaceable. Then $\Diffeo(S)$, $\Homeo(S)$ and $\Map(S)$ are not linear,
		are not (cyclically or linearly) orderable and are not coherent.
	\end{corollary}

	\subsection{Methods}
	
	Our primary inspiration is the work of Aougab--Patel--Vlamis in~\cite{APV}.
	Following their approach, we exploit some variation of a Cayley graph of a given
	group $G$ to guide our construction of a translation structure. The core
	idea is to construct a vertex surface ensuring that it has the appropriate end
	space. To do this, we follow and adapt earlier techniques of
	Allcock~\cite{Allcock06}, Przytycki--Valdez--Weitze-Schmith\"usen~\cite{PSV},
	Maluendas--Valdez~\cite{Maluendas-Valdez17} and Maluendas--Randecker~\cite{Randecker2016}. However, a major difference is that flat surfaces
	lack the hyperbolic collar lemma, which is a crucial tool for~\cite{APV}. To
	circumvent this difficulty, we simplify their construction by eliminating the
	edge surfaces and gluing using~slits.
	
	The use of the Cayley graph of $G$ guarantees that the group naturally acts
	isometrically on the constructed surface. Therefore, to complete our
	proofs, we must show that there are no other isometries.
	
	We emphasize that all our proofs are \emph{constructive}, providing a step-by-step
	procedure to construct a translation surface $M$ from a topological surface $S$
	and an abstract group $G$. For the sake of clarity, we provide a blueprint for
	each construction, aiming to streamline the presentation as much as possible.
	Finally, since some of the results from~\cite{APV} are independent of the
	hyperbolic metric, they apply to our setting directly. We do not reproduce these proofs,
	preferring to refer the reader to the well-written original~\cite{APV}.
	
	\subsection{Open questions}
	The constructions outlined earlier have two drawbacks: the resulting surface $M$
	always has infinite area, and the actions of both $\Isom(M)$ and $\Aff(M)$ have no
	fixed points. While the absence of fixed points is desirable for $\Isom(M)$,
	which should act properly discontinuously, it is not for~$\Aff(M)$, and the
	infinite area of $M$ is unnatural.  This motivates the following questions:
	
	\begin{question}
		\label{q:finite-area-isom-veech}
		Let $S$ be an infinite-type surface and $G$ an (abstract) group. Does there exist a translation structure~$M$ on $S$ of \emph{finite area} such that $G$ is isomorphic to $\Isom(M)$?
	\end{question}
	
	Even for simple instances, this question remains open. For example, to the best
	of our knowledge, it is still unknown if when $E(S)=E^g(S)$ has two elements,
	there exists a translation structure $M$ on $S$ such that $\Gamma(M)=\SL(2,\Z)$.
	
	However, in upcoming work the authors show that there exist translation structures on the Loch Ness monster that can realize any free group of any rank as Veech group.

	\subsection{Organization of the paper}
	The paper is organized as follows. We begin in~\cref{sec:topo-preliminaries} by
	recalling basic topological results such as the classification of infinite-type
	surfaces and the Cantor--Bendixson derivative. Then we introduce and discuss the
	notions of self-similar and doubly-pointed end spaces. Of
	particular importance is \cref{ssec:translatable-surfaces} where we
	define translatable surfaces and present some useful results regarding these. In
	\cref{ssec:useful-results} we present two important results for groups of
	homeomorphisms acting properly discontinuously on a surface.
	In \cref{sec:transsurf-preliminaries}, we give the definition of translation
	surfaces and of the groups we will be interested in. We also recall the
	end-grafting construction, which is a fundamental tool in what follows. Finally,
	we prove our results about isometry groups in~\cref{sec:proofs-isometries} and
	the ones about Veech groups in~\cref{sec:proofs-Veech}.
		
	\section*{Acknowledgements}
	This research project was supported by AIM’s research
	program Structured Quartet Research Ensembles (SQuaREs).
	A.\,R.\ acknowledges funding from Deutsche Forschungsgemeinschaft (DFG,
	German Research Foundation) under RA 2857/3-1 in the Priority Programme
	``Geometry at Infinity'' (SPP 2026) and Germany’s Excellence Strategy - EXC
	2181/1 – 390900948 (the Heidelberg STRUCTURES Excellence Cluster).
	F.\,V.\ would like to thank the following grants UNAM PAPIIT IN-101422 and IN106925.
	
	\section{Topological preliminaries}
	\label{sec:topo-preliminaries}
	
	In this section, we introduce the topological preliminaries that are necessary
	for the subsequent discussions. Let $S$ be an orientable surface of genus
	$g(S)\in\Z_{\geq 0}\cup{\infty}$ which is not necessarily compact. We are
	mainly concerned with surfaces of infinite type, that is, surfaces for
	which~$\pi_1(S)$ is not finitely generated. We denote by $E(S)$ and
	$E^g(S)\subset E(S)$ the spaces of ends of $S$ and of the ends of $S$
	accumulated by genus, respectively. Note that $E^g(S)\neq\emptyset$ if and only if the surface $S$ has infinite genus. Both of these spaces can be embedded in the Cantor set as
	closed subsets. A detailed discussion of ends of surfaces can
	be found in \cite{Richards63}.
	
	The following classical result by Ker\'ekj\'art\'o and Richards (see
	[\emph{Ibid.}]) provides a topological classification of orientable surfaces:
	
	\begin{theorem}[Topological classification of orientable surfaces]
		\label{thm:classification-surfaces}
		Two orientable surfaces $S$ and $S'$ are homeomorphic if and only if
		$g(S)=g(S')$ and $E^g(S)\subset E(S)$ is homeomorphic to $E^g(S')\subset E(S')$
		as pair of nested topological spaces. Moreover, for every $g\in\Z_{\geq
			0}\cup\{\infty\}$ and pair of closed subsets $E^g\subset E$ of the Cantor set,
		there exists an orientable surface $S$ such that $g(S)=g$ and $E^g(S)\subset
		E(S)$ is homeomorphic to~$E^g\subset E$.
	\end{theorem}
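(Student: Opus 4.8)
The plan is to treat the two implications separately, handling first the realization claim and then the harder statement that the invariants determine the homeomorphism type.

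\emph{Realization.} Given $g \in \Z_{\geq 0} \cup \{\infty\}$ and nested closed subsets $E^g \subseteq E$ of the Cantor set $C$, I would first note that necessarily $E^g = \emptyset$ when $g < \infty$ (all handles lie in a compact region) and $E^g \neq \emptyset$ when $g = \infty$ (infinitely many pairwise disjoint handle-disks escape every compact set and hence accumulate onto a nonempty closed set of ends), so these are the only compatibility constraints and the data $g$ and $E^g \subseteq E$ are otherwise independent. Embed $C$, and with it $E^g \subseteq E$, as a closed nowhere-dense totally disconnected subset of $S^2$, and put $\Sigma_0 \coloneqq S^2 \setminus E$; this is a genus-zero surface whose end space is canonically identified with $E$. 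Next choose a subset $\{p_i\}_{i \in I}$ of $\Sigma_0$ that is discrete and closed in $\Sigma_0$ and whose closure in $S^2$ equals $E^g$ — possible because $E^g$ is compact metrizable, hence separable, and each of its points is an end of $\Sigma_0$ and so can be approached from within $\Sigma_0$ — with $|I| = g$ when $g < \infty$ and $I$ countably infinite when $g = \infty$. Attaching a handle inside a small disk around each $p_i$ (the disks may be taken pairwise disjoint by discreteness) yields a surface $S$; one checks directly that $g(S) = g$, that $E(S) \cong E$, and that an end of $S$ is accumulated by genus exactly when it lies in $\overline{\{p_i\}} = E^g$. Hence $(E^g(S) \subseteq E(S)) \cong (E^g \subseteq E)$ as nested pairs.

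\emph{Invariants determine the surface.} Suppose $g(S) = g(S')$ and there is a homeomorphism $\varphi \colon E(S) \to E(S')$ with $\varphi(E^g(S)) = E^g(S')$. Exhaust $S = \bigcup_n K_n$ and $S' = \bigcup_n K'_n$ by increasing sequences of compact connected subsurfaces with boundary, chosen so that every connected component of $S \setminus K_n$ has a single boundary circle, and similarly for $S'$; then the components of $S \setminus K_n$ induce a finite clopen partition $\mathcal P_n$ of $E(S)$, these partitions refine as $n$ grows, and $\bigcup_n \mathcal P_n$ generates the topology of $E(S)$ (and likewise $\mathcal P'_n$ for $S'$). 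After passing to a common refinement of the two filtrations — using that $\varphi$ is a homeomorphism of compact totally disconnected spaces, so clopen partitions of source and target can be matched and interleaved — I may assume that for every $n$ the map $\varphi$ carries $\mathcal P_n$ bijectively onto $\mathcal P'_n$; one further arranges, at the cost of enlarging the $K_n$ again, that corresponding complementary components have the same (finite) genus and that genus-accumulating components correspond to genus-accumulating components — this is exactly where the nested-pair hypothesis is used. Finally build homeomorphisms $f_n \colon K_n \to K'_n$ by induction on $n$: the step from $K_n$ to $K_{n+1}$ glues finitely many compact connected bordered subsurfaces onto the boundary circles of $K_n$, and the corresponding step for $S'$ glues the matching pieces; since a compact connected orientable surface with nonempty boundary is determined up to homeomorphism by its genus and number of boundary components — the classical classification of compact surfaces — each newly attached piece of $S$ is homeomorphic to its counterpart in $S'$, so $f_n$ extends over the new pieces to $f_{n+1}$, compatibly with all boundary identifications. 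The union $f \coloneqq \bigcup_n f_n$ is a homeomorphism $S \to S'$.

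The main obstacle is the bookkeeping in the second part: one must show that exhaustions by compact bordered subsurfaces with one-circle complementary pieces exist and are cofinal, that any two such exhaustions admit a common refinement whose induced clopen partitions of the end spaces can be made arbitrarily fine, and — the genuinely delicate point — that the given homeomorphism $\varphi$ can be approximated stagewise by bijections of complementary pieces that respect both the finite genus of the pieces and the property of being accumulated by genus. Once this combinatorial skeleton is in place, the remainder is a routine back-and-forth extension resting on the classification of compact surfaces with boundary. For the details I would follow Richards~\cite{Richards63}.
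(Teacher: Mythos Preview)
The paper does not prove this theorem at all: it is stated as a classical result of Ker\'ekj\'art\'o and Richards, with a reference to~\cite{Richards63}, and used as a black box throughout. Your sketch is a reasonable outline of the standard Richards argument (exhaustions by compact bordered subsurfaces, matching clopen partitions of the end spaces, back-and-forth extension via the classification of compact surfaces with boundary), and you even defer to~\cite{Richards63} for the details, so there is nothing to compare against.

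One small slip in your realization paragraph: you write that the closure of $\{p_i\}$ in $S^2$ should equal $E^g$, but when $0 < g < \infty$ the set $\{p_i\}$ is a finite nonempty subset of $\Sigma_0$ while $E^g = \emptyset$, so that cannot hold. What you want is that the set of accumulation points of $\{p_i\}$ in $S^2$ (equivalently, $\overline{\{p_i\}} \cap E$) equals $E^g$. This is only a phrasing issue; the construction is fine.
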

	
	In addition to surfaces, we will also work with end spaces of graphs, which can
	be defined in terms of equivalence classes of infinite paths. Recall that every open subset $U\subset S$ determines an open set $U^*\subset E(S)$. For a general
	discussion on ends of manifolds, we refer to the classical work of
	Raymond~\cite{Raymond60}.
	
	Some of our results concern only surfaces with countable end spaces, so we
	recall notation and facts from descriptive set theory which are related
	to countable compact Hausdorff spaces. For more details, see,
	for example, \cite{Kechris}. Given an ordinal $\alpha$, we identify it with the set
	containing the previous ordinals: $\alpha = \{\beta: \beta<\alpha\}$, with the order topology. We denote
	the first (countably) infinite ordinal $\omega$. An ordinal $\alpha$ is called
	\emph{successor} if $\alpha = \beta + 1$ for some ordinal $\beta$ and
	\emph{limit} if is neither $0$ nor a successor.
	
	If $X$ is a topological space, we
	define
	\[
	X' \coloneqq \{x\in X: x \text{ is an accumulation point of } X\}.
	\]
	The set $X'$ is called the \emph{Cantor--Bendixson derivative} of $X$.
	For an ordinal $\alpha$, we inductively define
	\[
	\begin{split}
		X^0 & \coloneqq X \\
		X^{\alpha+1} & \coloneqq (X^{\alpha})' \\
		X^{\lambda} & \coloneqq \bigcap_{\beta < \lambda} X^{\beta}, \text{ if $\lambda$ is a limit ordinal.}
	\end{split}
	\]
	One can prove that the above sequence stabilizes for every Polish space. The
	smallest ordinal $\beta$ such that~$X^{\beta} = X^{\beta + 1}$ is called the
	\emph{Cantor--Bendixson rank} of $X$. The cardinality of the last non-empty derived set in the above sequence is called the \emph{Cantor-Bendixson degree} of $X$.
	
	The following result due to Mazurkiewicz--Sierpinski~\cite{Mazurkiewicz-Sierpinski} provides a complete classification of countable, compact, Hausdorff spaces:
	
	\begin{theorem}[Topological classification of countable ordinals]
		Let $X$ be a countable, compact, Hausdorff space. Then there exist a countable
		ordinal $\alpha$ and $d\in\N$ such that $X$ is homeomorphic to
		$\omega^{\alpha}\cdot d+1$.
	\end{theorem}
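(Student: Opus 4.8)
The plan is to prove the Mazurkiewicz--Sierpiński theorem by a transfinite induction on the Cantor--Bendixson rank of $X$, exactly as recalled in the preceding paragraph. First I would observe that since $X$ is countable, compact, and Hausdorff, it is a Polish space, so the Cantor--Bendixson sequence $X\supseteq X'\supseteq X''\supseteq\cdots$ stabilizes at some countable ordinal, the rank $\beta$. Because $X$ is countable we have $X^\beta=\emptyset$ (a nonempty perfect subset of a Polish space is uncountable), and moreover $\beta$ must be a successor, say $\beta=\alpha+1$: if $\beta$ were a limit ordinal, then $X^\beta=\bigcap_{\gamma<\beta}X^\gamma$ would be a nested intersection of nonempty compact sets, hence nonempty, a contradiction. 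Thus $X^\alpha$ is a nonempty finite set; call its cardinality $d$. The claim is then that $X\cong\omega^\alpha\cdot d+1$.

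Next I would set up the induction. For $\alpha=0$: then $X=X^0$ is finite of cardinality $d$, and $\omega^0\cdot d+1=d+1$ — wait, here one must be careful with the convention; the standard statement has $X\cong\omega^\alpha\cdot d+1$ only when $\alpha\ge 1$, while for $\alpha=0$ a nonempty finite discrete space of cardinality $d$ is just $d$ as an ordinal. I would either restrict to $\alpha\ge 1$ (absorbing the finite case into the hypothesis $X$ infinite, which is the only case we need) or phrase the base case as $\alpha=1$: a countable compact Hausdorff space with $X'$ finite of size $d$ and $X''=\emptyset$. Such a space is a finite disjoint union of $d$ compact spaces each with exactly one accumulation point, and each of those is homeomorphic to $\omega+1$ (a convergent sequence with its limit), so $X\cong(\omega+1)\cdot d$, which is order-homeomorphic to $\omega\cdot d+1$. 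For the inductive step with $\alpha$ arbitrary of successor or limit type, the key structural fact is: the points of $X^\alpha=\{p_1,\dots,p_d\}$ each have a neighborhood basis of clopen sets (Boolean-ness of zero-dimensional compact Hausdorff spaces), so one can partition $X$ into $d$ clopen pieces $U_1,\dots,U_d$ with $p_i\in U_i$ and $U_i^\alpha=\{p_i\}$. It then suffices to show each $U_i\cong\omega^\alpha+1$, and glue: a disjoint union of $d$ copies of $\omega^\alpha+1$ is homeomorphic to $\omega^\alpha\cdot d+1$ by collecting the $d$ top points into a single limit, using that $\omega^\alpha$ is closed under the relevant operations.

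So the heart is the case $d=1$: a countable compact Hausdorff $Y$ with $Y^\alpha=\{p\}$ and $Y^{\alpha+1}=\emptyset$ is homeomorphic to $\omega^\alpha+1$. Here I would again split into successor and limit $\alpha$. If $\alpha=\gamma+1$, then $Y'$ is a countable compact space with $(Y')^\gamma$ finite (of some size $e$) and $(Y')^{\gamma+1}=\emptyset$; there is a clopen neighborhood $V$ of $p$ with $V\cap(Y'\setminus\{p\})$ "small", and one writes $Y$ as a convergent sequence of clopen pieces, each of strictly lower rank, converging to a piece containing $p$; applying the inductive hypothesis to each piece (each homeomorphic to some $\omega^{\gamma}\cdot(\text{finite})+1$) and reassembling via countable ordinal arithmetic ($\omega^{\gamma+1}=\sup_n \omega^\gamma\cdot n$) gives $Y\cong\omega^{\gamma+1}+1$. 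If $\alpha$ is a limit, one uses that the ranks of points near $p$ (other than $p$) are a set of ordinals cofinal in $\alpha$, decomposes $Y\setminus\{p\}$ into countably many clopen pieces of ranks approaching $\alpha$, applies induction to each, and reassembles using $\omega^\alpha=\sup_{\gamma<\alpha}\omega^\gamma$.

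The main obstacle I expect is the bookkeeping in the gluing/reassembly steps: one needs a clean lemma of the form ``if $Z$ is partitioned into countably many clopen sets $Z_n$ with $Z_n\cong\omega^{\gamma_n}+1$ and the $Z_n$ converge to a single point (i.e. every neighborhood of that point contains all but finitely many $Z_n$), and $\sup_n(\gamma_n+1)=\alpha$ in the appropriate sense, then $Z\cong\omega^\alpha+1$'' — and verifying this requires care about which sequences of ordinals actually arise, and matching the topological ``one-point compactification of a countable discrete union'' picture with the order topology on $\omega^\alpha$. A secondary subtlety is ensuring at each stage that the clopen pieces genuinely have strictly smaller Cantor--Bendixson rank so the induction is well-founded; this follows because removing a clopen neighborhood of the (finitely many) top-rank points drops the rank, but it should be stated explicitly. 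Since this theorem is classical (it is exactly Mazurkiewicz--Sierpiński, cited as~\cite{Mazurkiewicz-Sierpinski}), in the paper I would likely present only this sketch and refer to the literature, e.g.~\cite{Kechris}, for the detailed transfinite induction.
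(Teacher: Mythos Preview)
The paper does not prove this theorem at all: it simply states it as a classical result and cites the original Mazurkiewicz--Sierpi\'nski paper~\cite{Mazurkiewicz-Sierpinski}, with no sketch given. Your final instinct---to present at most a sketch and defer to the literature---is therefore exactly what the paper does, except the paper omits even the sketch.

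Your outline of the transfinite induction on the Cantor--Bendixson rank is the standard argument and is essentially correct. The one wrinkle you flag about the $\alpha=0$ case is real: with the convention $\omega^0=1$, the space $\omega^0\cdot d+1$ has $d+1$ points, not $d$, so the formula as stated does not literally cover finite spaces (one either restricts to infinite $X$ or adjusts the statement). This is a known quirk of how the theorem is usually phrased and not a defect in your reasoning. The reassembly lemma you isolate is indeed the crux, and your caution about well-foundedness of the induction is appropriate; both are handled in standard references such as~\cite{Kechris}.
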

	
	From this result, we get that when $X$ is a countable and compact ordinal, its rank is of the form $\beta=\alpha+1$ for some countable ordinal $\alpha\geq 0$. If $X$ is a countable compact space of Cantor-Bendixson rank $\beta=\alpha+1$ and degree $d$, the ordered pair $(\alpha,d)$ is called the \emph{characteristic system} of $X$.  A finite set of cardinality $d$ has characteristic system $(0,d)$. The compact countable ordinal $\omega^{\alpha}\cdot d+1$ has characteristic system $(\alpha,d)$  if~$\alpha>0$ and  $(\alpha,d+1)$ if $\alpha=0$. For this last case, we interpret $\omega^0=1$ and thus $\omega^0\cdot d +1= d+1$.

	\subsection{Self-similar and doubly pointed end spaces}
	In this section, we
	introduce the key topological notions regarding the end spaces of surfaces that
	are important for this paper. For simplicity, throughout this section,
	$E^g\subset E$ denotes the end space of a surface.
	
	\begin{disclaimer}
		To simplify notation, we endow every subset $X$ of $E$ with the subspace
		topology and consider it as a pair of nested topological spaces $X\cap
		E^g\subset X$. Furthermore, when we say that $X\subset E$ is homeomorphic to
		$X'\subset E$, we mean that $X\cap E^g\subset X$ and $X'\cap E^g\subset X'$ are
		homeomorphic as nested topological spaces.
	\end{disclaimer}
	
	\begin{definition}
		\label{def:self-similar-doubly-pointed}
		The end space $E$ is said to be \emph{self-similar} if for any partition
		$E=E_1\sqcup\dotsb\sqcup E_n$ into pairwise disjoint clopen subsets, there
		exists $i\in\{1,\dotsc,n\}$ and $X\subset E_i$ open such that $X$ is homeomorphic
		to~$E$.
	\end{definition}
	
	\begin{definition}
		\label{def:doubly pointed}
		The end space $E$ is said to be \emph{doubly pointed} if there are exactly two
		points in $E$ whose $\Homeo(S)$--orbit is finite.
	\end{definition}
	
	Every finite self-similar end space has cardinality 1. Similarly, every finite doubly-pointed end space has cardinality 2.
	An infinite countable end space with $E^g=E$ is self-similar (respectively
	doubly pointed) if and only if $E$ has degree 1 (respectively 2). For instance,
	$E=\omega+1$ is self-similar. If $x \coloneqq (\omega+1)'$ (here the exponent denotes the
	Cantor--Bendixson derivative) then $E$ can be written as
	$E\setminus\{x\}=\bigsqcup_{n\in \N}E_n
	$, where each $E_n$ is homeomorphic to $\omega$, $E_n$ is disjoint from the closure
	of $E_m$ whenever $n\neq m$, and $\cap_{n\in\N}\overline{E_n}=x$. Aougab--Patel--Vlamis
	generalized this property to give a normal form for self-similar spaces as
	follows (see~\cite{APV}*{Definition~4.6}).
	
	\begin{definition} \cite{APV}
		\label{def:radially-symmetric}
		A compact Hausdorff topological space has \emph{radial symmetry} if either $E$
		is a singleton or there exists a point $x\in E$ and a collection of pairwise
		homeomorphic, non-compact sets $\{E_n\}_{n\in\N}$ such that $E_n$ is disjoint
		from the closure of $E_m$ whenever $n\neq m$ and such that
		\[
		E\setminus\{x\}=\bigsqcup_{n\in\N}E_n.
		\]
	\end{definition}
	
	\begin{remark}
		\label{rmk:star-point}
		By compactness of $E$ and the topological assumptions on $E_n$, the point $x$
		belongs to the closure of $E_n$ for every $n\in\N$. In this context, $x$ is
		called a \emph{star point} of $E$.
	\end{remark}
	
	The following theorem is a fundamental tool in the proof of our results.
	
	\begin{theorem}\cite{APV}*{Theorem 5.2 }
		\label{thm:self-similar-radially-symmetric}
		Let $E$ be the end space of an orientable surface $S$. Then $E$ is self-similar
		if and only if $E$ has radial symmetry.
	\end{theorem}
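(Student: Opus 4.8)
\textbf{Proof proposal for \cref{thm:self-similar-radially-symmetric}.}

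The plan is to prove both implications of the equivalence. The easy direction is that radial symmetry implies self-similarity. Suppose $E$ has radial symmetry with star point $x$ and decomposition $E \setminus \{x\} = \bigsqcup_{n \in \N} E_n$ into pairwise homeomorphic non-compact pieces, each $E_n$ disjoint from $\overline{E_m}$ for $n \neq m$. Given any clopen partition $E = E_1' \sqcup \dotsb \sqcup E_k'$, the star point $x$ lies in exactly one piece, say $E_1'$. Since $E_1'$ is open and contains $x$, and since $x \in \overline{E_n}$ for every $n$, the clopen set $E_1'$ must contain all but finitely many of the $E_n$ entirely (using that each $E_n$ is connected-enough in the relevant sense, or more carefully: $E_1'$ is an open neighborhood of $x$, and by the disjointness-from-closures hypothesis, any open neighborhood of $x$ contains cofinitely many $E_n$). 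Then inside $E_1'$ one finds a copy of $E$ itself: take the sub-union $\{x\} \cup \bigsqcup_{n \in A} E_n$ over an infinite set $A$ of indices with $E_n \subset E_1'$; this is an open subset of $E_1'$ (one should check openness — it is the complement in $E$ of the closed set $\bigsqcup_{n \notin A} E_n \cup (\text{finite stuff})$, which works since $A$ is cofinite in the set of good indices), and it is homeomorphic to $E$ by relabeling the $E_n$. Hence self-similarity holds. One must also handle the trivial case where $E$ is a singleton, which is immediate.

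The harder direction is that self-similarity implies radial symmetry. Here I would argue as follows. Assume $E$ is self-similar and not a singleton; we must produce the star point $x$ and the decomposition. The natural candidate for $x$ is a point whose $\Homeo$-orbit — or rather whose "local type" — is somehow maximal or rare; more concretely, one expects $x$ to be characterized by: every clopen neighborhood of $x$ contains a homeomorphic copy of $E$. First I would show such a point exists: if no point had this property, then for each $y \in E$ one could find a clopen neighborhood $U_y$ with no embedded copy of $E$; by compactness finitely many $U_{y_1}, \dotsc, U_{y_m}$ cover $E$, refine to a clopen partition, and self-similarity forces one block to contain a copy of $E$, a contradiction. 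Uniqueness of $x$ is the delicate point: one partitions $E$ into two clopen pieces separating two putative star points and derives a contradiction with the "copy of $E$ in every neighborhood" property, using that a copy of $E$ sitting in a proper clopen subset together with the complement would let us iterate and build an improper nested sequence. Once $x$ is pinned down, build the decomposition: choose a neighborhood basis $E = U_0 \supset U_1 \supset U_2 \supset \dotsb$ of clopen sets shrinking to $\{x\}$ (possible since $E$ embeds in the Cantor set, hence is metrizable and zero-dimensional), set $A_n = U_{n} \setminus U_{n+1}$, and regroup the nonempty annuli $A_n$ into countably many pieces $E_n$, each a countable union of annuli, arranged so that each is non-compact (accumulating only at $x$) and the pieces are pairwise disjoint-from-closures. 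The real work is arranging that the $E_n$ are pairwise homeomorphic: this is where one leans hardest on self-similarity, applying it to partitions adapted to the annuli to show each annulus, hence each regrouped piece, realizes the "same" open subtype, so that after a further careful regrouping (a back-and-forth between the pieces) they become mutually homeomorphic. The nested-pair refinement $X \cap E^g \subset X$ must be tracked throughout, which complicates the homeomorphism-matching but does not change the structure of the argument.

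The main obstacle I anticipate is the simultaneous handling of (i) uniqueness of the star point and (ii) the mutual homeomorphism of the pieces $E_n$, both of which require squeezing real content out of Definition~\ref{def:self-similar-doubly-pointed} — the definition only guarantees \emph{some} block contains \emph{some} open copy of $E$, with no canonicity, so one must bootstrap this into strong structural rigidity. In practice I would expect to need an auxiliary notion of the "$E$-saturated part" of a clopen set (the union of all clopen subsets homeomorphic to an open subset of $E$ witnessing self-similarity, or something in this spirit) and to prove it behaves well under the partition operation. Given that this is quoted as \cite{APV}*{Theorem 5.2}, the cleanest route for our paper is to cite it directly rather than reproduce the descriptive-set-theoretic argument in full; if a self-contained proof is wanted, the sketch above — existence of $x$ by a compactness/covering argument, uniqueness by a separating-partition argument, and construction of $\{E_n\}$ by a shrinking clopen neighborhood basis followed by a back-and-forth regrouping — is the skeleton to flesh out.
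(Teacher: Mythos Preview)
The paper does not supply its own proof of this theorem; it is stated with the attribution \cite{APV}*{Theorem~5.2} and used as a black box throughout. Your final paragraph already anticipates this correctly: citing \cite{APV} is precisely the route the paper takes.

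On the substance of your sketch, one concrete gap is worth flagging. In the ``easy'' direction (radial symmetry $\Rightarrow$ self-similar), the set you propose, $X = \{x\} \cup \bigsqcup_{n \in A} E_n$ for a cofinite $A \subsetneq \N$, is \emph{not} open in $E$. Its complement is $\bigsqcup_{n \notin A} E_n$, and since $x \in \overline{E_n}$ for every $n$ (this is exactly \cref{rmk:star-point}), the closure of that finite union is $\{x\} \cup \bigsqcup_{n \notin A} E_n$, which strictly contains the complement. So your parenthetical openness check fails. A repair requires more than relabelling: one has to argue, for instance, that the clopen piece $E_1'$ containing $x$ is itself homeomorphic to $E$, and this already uses the radial structure in a nontrivial way (each $E_n \cap E_1'$ is $E_n$ with a compact clopen piece removed, and one must absorb these defects). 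This is part of why the result in \cite{APV} is not a one-liner even in the forward direction. Your outline of the converse---existence of the star point via a compactness/covering contradiction, then a shrinking clopen basis and a back-and-forth to homogenize the pieces---is the right shape, with the difficulties located where you say they are.
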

	
	\subsection{Translatable surfaces}
	\label{ssec:translatable-surfaces}
	
	The notion of translatable surface allows us to present our main result in the
	form of a trichotomy. In what follows we present their definition, the notion of
	non-displaceable subsurface, and some of the main properties of translatable
	surfaces.
	
	Given a surface $S$, an end $e$ of $S$ and a sequence of curves $\{ \gamma_n\}$ in $S$,
	we say that $\lim_{n \rightarrow \infty} \gamma_n = e$ if and only if for every
	neighborhood $U^*$ of $e$, there exists an $N$ such that for all $n>N$,
	$\gamma_n$ is contained in $U$.

	\begin{definition}[Translatable surfaces] \label{def:translatable} Let $S$ be a
		surface. A homeomorphism $h$ of $S$ is called a \emph{translation} if there
		exist two \emph{distinct} ends $e_-$ and $e_+$ in $E(S)$, such that for any
		closed curve $\alpha$ on $S$, $\lim_{n \rightarrow \infty} h^n(\alpha) = e_+$
		and $\lim_{n \rightarrow \infty} h^{-n}(\alpha) = e_-$.
		
		The surface $S$ is called \emph{translatable} if there exists a translation on $S$.
		
	\end{definition}
	
	\begin{remark}
		The definition of translatable surface appears first
		in~\cite{schaffer-cohen_20} and is slightly different from
		\cref{def:translatable}. There are two main differences. First, in
		\cite{schaffer-cohen_20}, $\lim_{n \rightarrow \infty} \gamma_n = e$ means
		that there exists an~$N$ such that for all $n>N$, $\gamma_n$ is contained in
		$U$ \emph{after some isotopy}. Second, translations are specific mapping
		classes, not homeomorphism. With this consideration, any element in the
		mapping class group of a cylinder (for example, the identity) is a translation given
		that any curve on a cylinder can be isotoped into an end.
		
		For our setup, it fits better to use homeomorphisms instead of mapping
		classes and consider curves instead of isotopy classes of curves. It is
		immediate to check that all translatable surfaces according to
		\cref{def:translatable} are translatable in the sense
		of~\cite{schaffer-cohen_20}.
	\end{remark}

	\begin{figure}[tb]
		\centering
		\includegraphics[width=.8\textwidth]{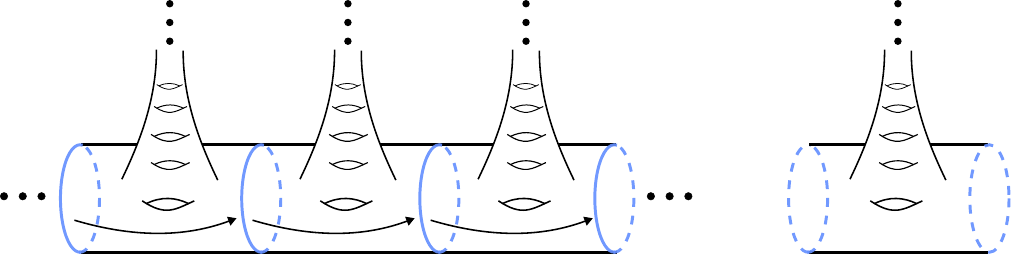}
		\caption{An example of a translatable surface $S$, with the translation $h$
			and some images of a curve under its action (in blue), together with its fundamental domain, from which we obtain the
			surface $S'=S/\langle h \rangle$ by identifying the two dashed boundary components.}
		\label{fig:translatable}
	\end{figure}
	
	Let $X$ be a topological space, and let $G$ be a group. In this paper, we call
	an action $G \to \Homeo(X)$ \emph{properly discontinuous} if for every compact
	subset $K \subset X$, the set $\{g \in G : gK \cap K \neq \emptyset\}$ is finite.
	For a general discussion on various notions of proper discontinuity for group
	actions, we refer the reader to~\cite{Kap23}. The following lemma illustrates the
	structure of the end space of a translatable surface. We will use it to give
	examples of surfaces whose end space is doubly pointed but which are not translatable.

	\begin{lemma} \label{lem:translatable_Z-cover}
		Let $S$ be an infinite-type surface.
		\begin{enumerate}
			
			\item If $S$ is translatable, then for every translation $h$, the infinite cyclic group $\left\langle h\right\rangle$ acts properly discontinuously and freely on $S$. As a consequence, $S$ is a regular $\mathbb{Z}$--cover of the surface $S' \coloneqq S / \left\langle h \right\rangle$.
			\item If $S$ has a translation structure or hyperbolic structure and there exists an isometry $h$ of $S$ of infinite order, then the infinite cyclic group $\left\langle h\right\rangle$ acts properly discontinuously and freely on $S$. As a consequence, $S$ is a regular $\mathbb{Z}$--cover of the surface $S' \coloneqq S / \left\langle h \right\rangle$.
		\end{enumerate}
		
		\begin{proof}
			\begin{enumerate}
				\item Let $h$ be a translation in the sense of \cref{def:translatable}. Then
				$h$ has infinite order. We will show that the group $\left\langle h
				\right\rangle$ acts freely and properly discontinuously on $S$. Then $S'
				\coloneqq S / \left\langle h \right\rangle$ is a surface, see
				\cref{fig:translatable} for an example.
				
				Consider a compact subset $K$ of $S$. Assume there existed infinitely many
				elements $f \in \langle h \rangle$ with~$f (K) \cap K \neq \emptyset$. We
				may enlarge $K$ to a compact set that has negative Euler characteristic
				(because~$S$ is of infinite type), while maintaining this property.  We claim that $K$
				contains an essential closed curve~$\alpha$ such that, for infinitely many elements $f
				\in \langle h \rangle$, $f(\alpha)$ intersects $K$. Indeed, by hypothesis there are infinitely many integers  $n_i$, $i\in\N $, such that $ h^{n_i}(x_i)\in K$  for some~$x_i\in K $, depending on~$i$. Up to taking a subsequence (of the indexes $i\in\N$), we can suppose that the sequence~$(x_i)_{i\in\N}$ converges to a point $x$ in $K$. Let then $\alpha$ be an essential closed curve in~$K$ containing infinitely many elements of the sequence $(x_i)$.
				Then $f(\alpha)$ intersects $K$ for infinitely many $f \in \langle h \rangle$ and $\lim_{n
					\rightarrow \infty} h^n(\alpha)$ can neither be~$e_+$ nor $e_-$, which is a
				contradiction. Therefore the action of~$\langle h \rangle$ is properly
				discontinuous.
				
				Assume now that a nontrivial element $f \in \langle h \rangle$ fixes a point $x$. Then
				we can find a compact set~$K$ with negative Euler characteristic that contains
				$x$ and for all $n \in \mathbb{N}$, we have $f^n(K) \cap K \neq \emptyset$. By
				the argument in the preceding paragraph, this leads to a contradiction, and
				therefore the action is~free.
				
				\item We claim that, since $h$ is an isometry of infinite order, $\left\langle h \right\rangle$ acts properly discontinuously on~$S$. This follows from the fact that the isometry group of any infinite-type translation surface acts properly discontinuously on the surface, see
				\cref{cor:infinite_type_properly_discontinuously} in \cref{ssec:useful-results}, where we gather results about homeomorphism groups acting properly discontinuously. As in the previous case, we also obtain that $\left\langle h \right\rangle$ acts freely. Therefore, $S' \coloneqq S / \left\langle h \right\rangle$ is again a surface.
				\qedhere
			\end{enumerate}
		\end{proof}
	\end{lemma}
	
	\begin{remark}
		\label{rmk:translatable_Z-cover}
		As a consequence of \cref{lem:translatable_Z-cover}, for a translatable surface $S$ with a translation $h$, we have
		\begin{equation}
				\label{eq:decomposition-translatable-covering-0}
			S=\bigcup_{n\in\Z}h^n(F)
		\end{equation}
		for a fundamental domain $F$ of the action of $h$ on $S$. Since $S$ is connected, $F$ is a connected (possibly infinite-type) surface.  Indeed, the covering $S\to S'$ is defined by a surjective map $\varphi:\pi_1(S')\to\Z$. Using that $H^1(S';\Z)$ is homotopy equivalent to $[S', \mathbb{S}^1]$, the homotopy classes of maps $S' \to \mathbb{S}^1$, 
		we have that $\varphi$ is induced by a map $g \colon S' \to \mathbb{S}^1$. We can consider then a lift $\tilde{g}\colon S\to\R$ such that $\tilde{g}(h(x))=\tilde{g}(x)+1$ and take~$F=\tilde{g}^{-1}([0,1])$. Hence the end space of a
		translatable surface $S$ has a decomposition (which depends on the translation
		$h$) of the form
		\begin{equation}
			\label{eq:decomposition-translatable-covering}
			E(S) = \bigsqcup_{n \in \mathbb{Z}} E_n \sqcup \{e_+\} \sqcup \{e_-\}
		\end{equation}
		
		where:
		\begin{itemize}
			\item Each $E_n$ is homeomorphic to $E(S')$. Moreover, for every
			$n\neq m$, there exist open subsets $U_n^*,U_m^*$ of $E(S)$ such that
			$E_n\subset U_n^*$, $E_m\subset U_m^*$ and $U_m^*\cap U_n^*=\emptyset$.
			\item Both $e_+$ and $e_-$ are accumulation points of $\sqcup_{n \in
				\mathbb{Z}} E_n$. More precisely, we can assume that the
			enumeration $\sqcup_{n \in \mathbb{Z}} E_n$ is so that
			$h^k(E_n)=E_{n+k}$ for every $n,k\in\Z$, and that
			$\lim_{k\to\pm\infty}h^k(e)=e_\pm$ for every $e\in \sqcup_{n \in
				\mathbb{Z}} E_n$.
		\end{itemize}
		When $S$ has a translation structure or hyperbolic structure with an isometry of infinite
		order, such a decomposition of $E(S)$ is still valid but one could have that
		$e_+=e_-$. For example, consider the $\Z$--action on~$\C$ given by
		$h(z)=z+1$. Let $C\subset\C$ be a copy of the Cantor set contained in a
		fundamental domain of this~$\Z$--action and define $C_n\coloneqq h^n(C)$. Then
		$\C\setminus\sqcup_{n\in\Z}C_n$ has a natural translation structure for
		which $h$ is an isometry of infinite order. The end space of
		$\C\setminus\sqcup_{n\in\Z}C_n$ is a Cantor set. However, $h$ is \emph{not} a
		translation in the sense of \cref{def:translatable} because there, we
		require $e_+\neq e_-$.
	\end{remark}
	
	\begin{lemma} \label{lem:translatable_non_self-similar_implies_doubly_pointed}
		Let $S$ be a translatable surface. Then $E(S)$ fulfills exactly one of the following two possibilities:
		\begin{enumerate}
			\item The end space $E(S)$ is self-similar.
			\item The end space $E(S)$ is doubly pointed.
		\end{enumerate}
	\end{lemma}
		
	\begin{proof}
		By \cref{rmk:translatable_Z-cover}, if $S$ is a translatable surface
		with translation $h$ then $E(S)$ decomposes as
		in~\cref{eq:decomposition-translatable-covering} and
		$\lim_{k\to\pm\infty}h^k(e)=e_\pm$ for every $e\in \sqcup_{n\in\Z}
		E_n$.
		
		We can read from this description that the only points in $E(S)$ whose
		$\Homeo(S)$--orbit can potentially be finite are $e_+$ and $e_-$.
		
		If the $\Homeo(S)$--orbits of $e_+$ and $e_-$ are actually finite, the only possible points in their orbit(s) are $e_+$ and $e_-$ themselves. Hence for an open subset to be homeomorphic to $E(S)$, it has to contain two distinct points, locally homeomorphic to $e_+$ and $e_-$. However, there exists a finite clopen partition of $E(S)$ that separates $e_+$ and $e_-$, hence $E(S)$ cannot be self-similar. Furthermore, $E(S)$ is by definition
		doubly pointed and hence (2) but not (1) is fulfilled.
		
		If the $\Homeo(S)$--orbit of at least one of $e_+$ or $e_-$ is
		infinite (say of $e_+$), then $E(S)$ is not doubly pointed.
		In particular, the orbit of $e_+$ intersects one, and hence all of the sets $E_n$.
		In every finite clopen partition of $E(S)$, the clopen set which contains $e_-$
		has to contain infinitely many of the sets $E_n$, hence by \eqref{eq:decomposition-translatable-covering} a neighbourhood of $e_-$ of the form $\sqcup_{n \in \mathbb{N}} E_n \sqcup \{e_-\}$.
		Therefore, it contains an open set homeomorphic to $E(S)$,
		which implies that $E(S)$ is self-similar. Hence (1) but not (2) is fulfilled.
		\end{proof}
		
	There are uncountably many examples of surfaces that satisfy
	\cref{lem:translatable_non_self-similar_implies_doubly_pointed} (1). One example is
	$\mathbb{S}^2$ punctured at a Cantor set (a.k.a.\ the Cantor tree). Another one
	is the Cantor tree punctured at an infinite countable discrete set $U$ of points
	that accumulates to the Cantor set. More examples can be produced by replacing
	each point in $U$ in the preceding sentence by an ordinal of the form
	$\omega^n+1$ for some fixed $n\in\N$.
	
	In the case that the end space is countable, we can use the language of characteristic systems to characterize translatable surfaces.

	\begin{lemma} \label{lem:translatable_countable_case}
		Let $S$ be a surface whose end space has characteristic system $(\alpha, 2)$ and with $E(S) = E^g(S)$. Then $S$ is translatable if and only if $\alpha$ is a successor ordinal or zero.
		
		\begin{proof}
			Note first that from the Cantor--Bendixson degree being $2$, it follows that $S$ has doubly-pointed end~space.
			
			If $S$ is translatable, then the surface $S'$ from \cref{lem:translatable_Z-cover} has an end space with characteristic system~$(\alpha', d)$ for some $\alpha$ and $d$. The decomposition of $E(S)$ from \cref{rmk:translatable_Z-cover} implies then that $\alpha = \alpha'+1$ is a successor ordinal.
			
			Now let $\alpha$ be a successor ordinal or zero. If $\alpha = 0$, then $S$ is a cylinder and hence translatable. If $\alpha$ is a successor, then there exists $\alpha'$ with $\alpha = \alpha'+1$. Let $S'$ be a surface such that the characteristic system of $E(S')$ is $(\alpha', 1)$ and such that all ends are non-planar. Then the regular $\Z$--cover of $S'$ is translatable by definition and has characteristic system $(\alpha, 2)$, hence is homeomorphic to $S$. Therefore, $S$ is translatable.
		\end{proof}
	\end{lemma}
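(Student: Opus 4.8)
The plan is to prove the two directions separately, using the $\Z$-cover description from \cref{lem:translatable_Z-cover} and \cref{rmk:translatable_Z-cover} together with the arithmetic of ordinals and characteristic systems. First I would record the easy observation that, since the Cantor--Bendixson degree of $E(S)$ is $2$ and $E(S)=E^g(S)$, the surface $S$ automatically has doubly-pointed end space, so the two cases of \cref{lem:translatable_non_self-similar_implies_doubly_pointed} are exhaustive and we are genuinely in the "doubly pointed" regime. This is just a sanity check and requires nothing new.

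For the forward direction, suppose $S$ is translatable with translation $h$. By \cref{lem:translatable_Z-cover}, $S' = S/\langle h\rangle$ is a surface and, by \cref{rmk:translatable_Z-cover}, $E(S)$ decomposes as $\bigsqcup_{n\in\Z} E_n \sqcup\{e_+,e_-\}$ with each $E_n \cong E(S')$, the pieces separated by disjoint clopen neighborhoods, and $e_\pm$ the limits of $h^{\pm k}$. Let $(\alpha',d')$ be the characteristic system of $E(S')$. I would argue that $(\alpha+1)$ must be the rank of $E(S)$: taking the Cantor--Bendixson derivative $\alpha'$ times kills the interior of each $E_n$ down to $d'$ points per piece, so $E(S)^{(\alpha')}$ is a countable discrete set of points (the surviving points from each $E_n$) together with $\{e_+,e_-\}$, which accumulates only at $e_+$ and $e_-$; one more derivative leaves exactly $\{e_+,e_-\}$, a two-point set, which is then stable. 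Hence the rank of $E(S)$ is $\alpha'+2$, so $\alpha = \alpha'+1$, a successor ordinal (in particular $\alpha \neq 0$ unless $S$ is a cylinder, which is handled trivially). The main thing to be careful about here is that the surviving points of the $E_n$ genuinely accumulate only at $e_+,e_-$ and nowhere else — this is exactly the content of the two bullet points in \cref{rmk:translatable_Z-cover}, namely that the $E_n$ are separated by disjoint clopen neighborhoods and that $h^k(e)\to e_\pm$.

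For the reverse direction, suppose $\alpha$ is a successor or zero. If $\alpha=0$ then $E(S)$ is homeomorphic to $2$ points with no genus accumulating — but wait, $E(S)=E^g(S)$ forces both ends to be accumulated by genus, so $S$ is the unique infinite-genus surface with two non-planar ends, i.e.\ the "infinite Loch Ness-type cylinder"; this is translatable by an explicit shift, or one applies the construction below with $\alpha'$ void. If $\alpha = \alpha'+1$, let $S'$ be the surface with characteristic system $(\alpha',1)$ and all ends non-planar (which exists by \cref{thm:classification-surfaces}, choosing any realization of $\omega^{\alpha'}+1$ as a closed subset of the Cantor set all of whose points are accumulated by genus). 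The regular $\Z$-cover of $S'$ corresponding to a surjection $\pi_1(S')\to\Z$ that is nontrivial on a suitable curve is translatable by construction, with the deck translation serving as $h$; by \cref{rmk:translatable_Z-cover} its end space is $\bigsqcup_{n\in\Z}E_n\sqcup\{e_+,e_-\}$ with $E_n\cong E(S')\cong \omega^{\alpha'}+1$, and reading off the characteristic system exactly as in the forward direction gives $(\alpha'+1,2) = (\alpha,2)$, with all ends accumulated by genus since that property is inherited in a cover. Hence this $\Z$-cover is homeomorphic to $S$ by \cref{thm:classification-surfaces}, and $S$ is translatable.

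I expect the only real obstacle to be the careful bookkeeping in the forward direction — verifying that iterating the Cantor--Bendixson derivative on $\bigsqcup_{n\in\Z}E_n\sqcup\{e_+,e_-\}$ drops the rank by exactly the rank of $E(S')$ plus one, and not more or less. The key input is that the $E_n$ are uniformly homeomorphic and mutually clopen-separated, so derivatives act "piecewise" on them, while the two extra points $e_\pm$ are precisely the new top-rank points; everything else is routine ordinal arithmetic and an appeal to \cref{thm:classification-surfaces}.
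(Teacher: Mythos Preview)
Your proposal is correct and follows essentially the same argument as the paper's proof: both directions use the $\Z$--cover structure from \cref{lem:translatable_Z-cover} and \cref{rmk:translatable_Z-cover} together with the computation of the characteristic system of the resulting end-space decomposition $\bigsqcup_{n\in\Z}E_n\sqcup\{e_+,e_-\}$. Your handling of the case $\alpha=0$ is in fact more careful than the paper's, which calls $S$ a cylinder despite the hypothesis $E(S)=E^g(S)$ forcing $S$ to be Jacob's Ladder; this does not affect the conclusion, since both surfaces are translatable.
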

	
	We now present the topological trichotomy that is needed for the proof of our main result. For this, we need to recall the notion of non-displaceable subsurface.
	
	\begin{definition}
		\label{def:non-displaceable-subsurface}
		A subsurface $S'\subset S$ is called \emph{non-displaceable} if for every $f\in\Homeo(S)$, we have that~$f(S')\cap S'\neq\emptyset$.
	\end{definition}
	
	See \cref{fig:non-displaceable} for an example of a non-displaceable subsurface.

	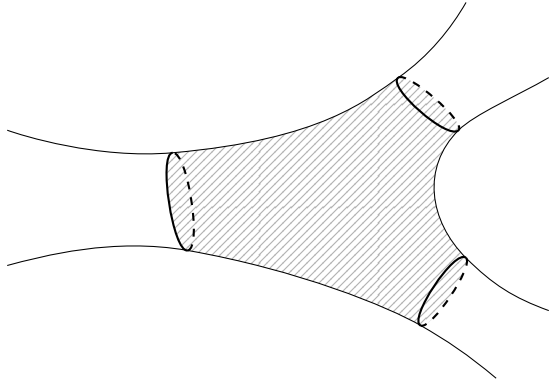
\begin{figure}[tbh]
		\centering
		\begin{tikzpicture}
			\path[pattern color=white!70!black, pattern=north east lines] (-0.5,2) .. controls +(-175:0.2cm) and +(-170:0.2cm) .. (-0.3,0.7)
			.. controls +(-10:1.5cm) and +(150:0.5cm) .. (2.8,-0.3)
			.. controls +(-20:0.15cm) and +(-45:0.15cm) .. (3.4,0.6)
			.. controls +(135:0.7cm) and +(-135:0.7cm) .. (3.3,2.3)
			.. controls +(45:0.15cm) and +(35:0.15cm) .. (2.5,3)
			.. controls +(-145:0.5cm) and +(5:2cm) .. (-0.5,2);
			
			\draw (-2.7,2.3) .. controls +(-20:0.2cm) and +(-175:1cm) .. (-0.5,2)
			.. controls +(5:2cm) and +(-145:0.5cm) .. (2.5,3)
			.. controls +(35:0.5cm) and +(-120:0.4cm) .. (3.4,4);
			\draw (-2.7,0.5) .. controls +(15:0.5cm) and +(170:1cm) .. (-0.3,0.7)
			.. controls +(-10:1.5cm) and +(150:0.5cm) .. (2.8,-0.3)
			.. controls +(-30:0.4cm) and +(140:0.5cm) .. (3.8,-1);
			\draw (4.5,3) .. controls +(-150:0.5cm) and +(45:0.3cm) .. (3.3,2.3)
			.. controls +(-135:0.7cm) and +(135:0.7cm) ..  (3.4,0.6)
			.. controls +(-45:0.5cm) and +(160:0.5cm) .. (4.5,-0.1);
			
			\draw[thick, dashed] (-0.3,0.7) .. controls +(10:0.2cm) and +(5:0.2cm) .. (-0.5,2);
			\draw[thick] (-0.5,2) .. controls +(-175:0.2cm) and +(-170:0.2cm) .. (-0.3,0.7);
			\draw[thick] (2.5,3) .. controls +(-145:0.2cm) and +(-135:0.2cm) .. (3.3,2.3);
			\draw[thick, dashed] (2.5,3) .. controls +(35:0.15cm) and +(45:0.15cm) .. (3.3,2.3);
			\draw[thick] (3.4,0.6) .. controls +(135:0.2cm) and +(160:0.2cm) .. (2.8,-0.3);
			\draw[thick, dashed] (3.4,0.6) .. controls +(-45:0.15cm) and +(-20:0.15cm) .. (2.8,-0.3);
		\end{tikzpicture}
		\caption{A non-displaceable subsurface.}
		\label{fig:non-displaceable}
	\end{figure}
	
	In the proof, we use the Mann--Rafi order on the end space of a surface and facts about the maximal elements w.r.t.~this order.
	
	\emph{Mann-Rafi order on $E(S)$}.   For $x,y\in E(S)$, we say that
	$x\preccurlyeq y$ if for every neighborhood $U$ of~$y$, there exists a
	neighborhood $V$ of $x$ and $f\in\Homeo(S)$ such that $f(V)\subset U$.  If
	$x\preccurlyeq y$ and $y\preccurlyeq x$, we say that~$x\sim y$.
	The relation $\preccurlyeq$ defines a strict partial order $\prec$ on the set of equivalence classes under $\sim$. We denote the equivalence class of $x$ by $E(x)$.
	Mann and Rafi show that $\prec$ has maximal elements.
	The set of ends whose equivalence class is maximal with respect to  $\prec$ is denoted by $\mathcal{M}$. Note that $\mathcal{M}\subset E(S)$ is $\Homeo(S)$--invariant by definition.
	We refer the reader to~\cite{Mann-Rafi_23}*{Section~4} for more details.

	\begin{lemma}[Trichotomy of surfaces with all ends accumulated by genus]
		\label{lemma:trichotomy-of-surfaces}
		Let $S$ be an infinite-type surface such that $E(S) = E^g(S)$. Then $S$ falls into exactly one of the following categories:
		\begin{enumerate}
			\item $S$ has self-similar end space,
			\item $S$ is translatable and $E(S)$ is not self-similar,
			\item $S$ has a non-displaceable subsurface of finite type or has doubly-pointed end space but is not translatable.
		\end{enumerate}
		
		\begin{proof}
			If $E(S) = E^g(S)$ and this space is self-similar then every finite-type subsurface $S'\subset S$ is displaceable. The same is true if~$S$ is translatable (but not a cylinder). Furthermore, no doubly-pointed end space is self-similar. These three arguments are sufficient to see that the three categories above are disjoint. We now prove that every surface falls into one of them.
			
			For this, we show that every surface has self-similar end space or doubly-pointed end space or a non-displaceable subsurface of finite type by using two tools from \cite{Mann-Rafi_23}.
			The first is [\emph{Ibid.}, Proposition 4.8] which states that, under the hypothesis that every finite-type subsurface is displaceable, $E(S)$ is self-similar if and only if the set $\mathcal{M}$ of maximal ends is either a singleton or a Cantor set of equivalent points.
			The second are the following two criteria for the existence of a finite-type non-displaceable subsurface (see~[\emph{Ibid.}, Example~2.5]):
		
\begin{enumerate}
	\item $S$ has an infinite end space and there exists a $\Homeo(S)$-invariant, finite set $Z\subset E(S)$ of cardinality at least $3$.
	\item $E(S)$ contains two $\Homeo(S)$--invariant disjoint subsets $X$ and $Y$ with $X$ homeomorphic to the Cantor set. 
\end{enumerate}

By [\emph{Ibid.}, Proposition 4.7], $\mathcal{M}$ decomposes into equivalence classes that are either finite or Cantor sets. We go now through all the possibilities:

\begin{itemize}
	\item If $|\mathcal{M}| = 1$, then $E(S)$ is self-similar or $S$ contains a non-displaceable subsurface of finite type.
	\item If $|\mathcal{M}| = 2$, then $E(S)$ is doubly pointed since by definition, any end which is not maximal has an infinite $\Homeo(S)$--orbit. 
	\item If $\mathcal{M}$ contains more than two points which are contained in finite equivalence classes, then Criterion~(1) above implies that we obtain a non-displaceable subsurface.
	\item If $\mathcal{M}$ consists exactly of one infinite equivalence class, then this is a Cantor set. Hence $E(S)$ is self-similar or $S$ contains a non-displaceable subsurface of finite type.
	\item If $\mathcal{M}$ consists of an infinite equivalence class (which must be a Cantor set) and at least one other equivalence class, then Criterion (2) above implies that there exists a non-displaceable subsurface of finite type.
	\qedhere
\end{itemize}
\end{proof}
\end{lemma}

\subsection{Two results on homeomorphism groups acting properly discontinuously}
\label{ssec:useful-results}

In the following, we present two technical results that are crucial for the proof of our main results.

\begin{lemma}
	\label{thm:topological-restrictions-homeo}
	Let $G<\Homeo(S)$ be a group acting properly discontinuously on a surface $S$. Then:
	\begin{enumerate}
		\item $G$ is countable.
		\item If $S$ has a non-displaceable subsurface of finite type, then $G$ is
		finite.
		\item If $S$ has doubly pointed end space but is not a cylinder, then $G$ is virtually cyclic.
	\end{enumerate}
\end{lemma}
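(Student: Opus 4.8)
The plan is to prove each of the three parts by exploiting the hypothesis of proper discontinuity, which provides strong finiteness constraints on how $G$ can move compact sets around.

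For part (1), the strategy is a standard exhaustion argument. Since $S$ is a surface, it admits an exhaustion $S = \bigcup_{n} K_n$ by an increasing sequence of compact sets, each of which we may take to contain a point in its interior (say a fixed basepoint $p$). For each $n$, proper discontinuity tells us that $\{g \in G : gK_n \cap K_n \neq \emptyset\}$ is finite. Now I would argue that $G = \bigcup_n \{g \in G : gK_n \cap K_n \neq \emptyset\}$: given any $g \in G$, the point $gp$ lies in some $K_n$, and since $p \in K_n$ for all large $n$ as well, we get $gp \in gK_m \cap K_m$ for $m$ large enough, so $g$ belongs to one of the sets in the union. Thus $G$ is a countable union of finite sets, hence countable.

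For part (2), let $S' \subset S$ be a non-displaceable subsurface of finite type. After enlarging $S'$ slightly we may assume it is compact with boundary (a finite-type subsurface has compactly supported closure up to its punctures; if needed, replace $S'$ by a compact core, noting that non-displaceability is preserved — or simply take $K$ to be a compact set whose image under any homeomorphism must meet $S'$). The key point: non-displaceability means $f(S') \cap S' \neq \emptyset$ for \emph{every} $f \in \Homeo(S)$, in particular for every $g \in G$. If $S'$ itself were compact, then $\{g \in G : gS' \cap S' \neq \emptyset\} = G$ is finite by proper discontinuity, and we are done. The only subtlety is that a finite-type subsurface need not be compact; I would handle this by passing to a compact core $K \subset S'$ such that $g(K)\cap K \neq\emptyset$ for all $g$ — this follows because the homeomorphism group of $S$ that preserves $S'$ up to isotopy can be used to push the intersection point into the core, or more directly one observes that non-displaceability of a finite-type subsurface is equivalent to non-displaceability of its compact core (a standard fact, and the definition in the excerpt via $f(S')\cap S'\neq\emptyset$ applied to a compact representative suffices). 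Then proper discontinuity applied to $K$ forces $G$ finite.

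For part (3), suppose $S$ has doubly-pointed end space but is not a cylinder. By the hypotheses available, $S$ has exactly two ends $e_+, e_-$ whose $\Homeo(S)$-orbit is finite; every other end has infinite orbit. I would first show there is a homomorphism $\phi \colon G \to \mathrm{Sym}(\{e_+, e_-\})$ recording the action on this distinguished pair: every $g \in G$ either fixes both or swaps them (it cannot send $e_+$ to an infinite-orbit end). The kernel $G_0$ has index at most $2$ in $G$, so it suffices to show $G_0$ is cyclic (infinite or trivial). The elements of $G_0$ fix both $e_+$ and $e_-$. The main work — and the main obstacle — is to show $G_0$ is cyclic. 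Here is the idea: since $S$ is not a cylinder, there is essential topology (genus or extra ends) concentrated somewhere between $e_+$ and $e_-$; pick a separating curve $\gamma$ cutting $S$ into two "halves," the $e_+$-side and the $e_-$-side, chosen so that $\gamma$ together with a compact piece $K$ captures all this essential topology. Proper discontinuity bounds how many $g \in G_0$ can fix $K$ setwise-up-to-intersection, so $G_0$ acts on the "line of ends" — one builds from the doubly-pointed structure a $G_0$-equivariant order (or a $\mathbb{Z}$-worth of regions, analogous to the decomposition in Remark~\ref{rmk:translatable_Z-cover}) and shows $G_0$ injects into $\mathbb{Z} \rtimes \{\pm 1\}$, and being in the kernel of the orientation part, into $\mathbb{Z}$; any subgroup of $\mathbb{Z}$ is cyclic. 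The delicate point is ruling out that $G_0$ acts "trivially on the ends but wildly on the surface," which is exactly where proper discontinuity is used: a nontrivial element fixing a large compact neighborhood of all the essential topology and fixing both ends would, by iterating, violate the finiteness of $\{g : gK \cap K \neq \emptyset\}$ unless its "translation length" along the $e_\pm$-axis is nonzero — so the translation-length map $G_0 \to \mathbb{Z}$ is injective. I would formalize this by choosing $K$ large enough to contain a compact core carrying all genus and all ends other than $e_\pm$ (possible since $S$ is not a cylinder, so this core is nonempty and, crucially, compact because only finitely many homeomorphism-orbits of ends are non-singletons — wait, this needs care; more robustly one argues directly that any $g \in G_0$ with $gK \cap K \neq \emptyset$ for the chosen $K$ must be isotopic to the identity and then is the identity by discreteness), completing the proof that $G$ is virtually cyclic.
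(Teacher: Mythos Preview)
Your arguments for parts (1) and (2) are correct and match the paper's proof essentially verbatim: countability via an exhaustion by compacta, and finiteness by applying proper discontinuity directly to a compact non-displaceable subsurface. (Your worry about compactness of $S'$ is harmless; a finite-type subsurface of $S$ has a compact closure in $S$, and non-displaceability passes to that closure.)

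Part (3), however, has a real gap. You reduce to the index-$\le 2$ subgroup $G_0$ fixing $e_+$ and $e_-$ and then set as your goal ``it suffices to show $G_0$ is cyclic (infinite or trivial).'' This is stronger than what is true. Take any translatable $S$ realized as a $\Z$--cover of a base surface $S'$ carrying a nontrivial finite isometry group that lifts; then $G_0$ contains $\Z$ together with those finite-order lifts and is not cyclic. Your mechanism for injectivity of the translation-length map is exactly where this breaks: you argue that a nontrivial $g\in G_0$ with translation length zero would, ``by iterating,'' produce infinitely many distinct elements hitting a fixed compact $K$, contradicting proper discontinuity. But if $g$ has finite order, iteration produces only finitely many elements, and there is no contradiction. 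Relatedly, your fallback of choosing $K$ to ``contain a compact core carrying all genus and all ends other than $e_\pm$'' is impossible when the genus is infinite or there are infinitely many other ends, which is the generic situation here.

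The paper's route (following \cite{APV}*{Theorem~4.13}) avoids this by not aiming for cyclicity. One picks an end $e$ with infinite $\Lambda$--orbit (where $\Lambda=G_0$), shows this orbit accumulates on both $e_+$ and $e_-$, fixes a separating curve $\gamma$, and defines
\[
\phi(g)=\bigl|g^*(L_\gamma)\setminus L_\gamma\bigr|-\bigl|L_\gamma\setminus g^*(L_\gamma)\bigr|,
\]
the net number of ends in the orbit $\Lambda e$ pushed from the $e_-$--side to the $e_+$--side of $\gamma$. This is a homomorphism $\Lambda\to\Z$, and proper discontinuity forces its kernel to be \emph{finite} (not trivial), which is exactly enough to conclude that $\Lambda$, hence $G$, is virtually cyclic. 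The case of finitely many ends (forcing $S$ to be Jacob's Ladder) is handled by a parallel homomorphism counting genus instead of ends. Your ``translation length'' idea is morally the same map $\phi$, but you need to weaken the conclusion to ``finite kernel'' rather than ``injective,'' and you need the explicit construction via an infinite orbit of ends to make the map well-defined in the first place.
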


\begin{proof}
	We first show that $G$ is countable. If $S$ is compact, then $G$ is finite, so
	we may assume that $S$ is non-compact. Let $\cup_{i\in\N} K_i=S$ be an
	exhaustion of $S$ by compact subsets. For every $K_i$, let $G_i = \{g : gK_i\cap
	K_i\neq\emptyset\}$. Then $\cup_{i\in\N}G_i$ is countable. If $G$ is uncountable
	there exists $g\in G\setminus \cup_{i\in\N}G_i$. Then, for every~$x\in K_1$, we
	have that $gx\notin K_i$ for all $i\in\N$, but this is a contradiction because
	$\cup_{i\in\N} K_i=S$. We conclude that $G$ is countable.
	
	Next, suppose that $S$ has a non-displaceable subsurface of finite type. Then
	we can choose a compact set~$K$ that includes this non-displaceable subsurface. For every element $g\in G$, we have $g(K)\cap K\neq\emptyset$, hence~$G$ cannot be infinite.
	
	Finally, we assume that $S$ is non-compact and has doubly pointed end space.
	By~\cite{APV}*{Theorem~4.13}, if $S$ is a complete hyperbolic surface and $G =
	\Isom(S)$, then $G$ is virtually cyclic. Their proof only uses topological
	arguments and the fact that $G$ acts properly discontinuously on $S$, so the
	proof of (3) is essentially the same. We include a sketch of the argument here
	for completeness.
	
	Since every finite group is virtually cyclic, we assume henceforth that $G$ is
	infinite. Let $\{e_1,e_2\}$ be the two distinguished elements of $E(S)$ and let
	$\Lambda$ be the stabilizer of $e_1$ in $G$. This is a subgroup of $G$ of index
	at most $2$ and hence itself infinite. The closure of any $\Lambda$--orbit in
	$E(S)$ contains $\{e_1,e_2\}$. Indeed, this is~\cite{APV}*{Lemma~4.14}. The
	proof of this lemma is again purely topological,\footnote{As a matter of fact,
		this proof uses at some point the existence of a separating simple closed
		geodesic $\gamma$, but it remains valid if we change $\gamma$ for a separating simple
		closed \emph{curve}.} so we refer to [\emph{Ibid}.] for details.
	There are two cases to consider:
	\begin{itemize}
		\item First suppose that $S$ has infinitely many ends. Let $e$ be an end
		with an infinite $G$--orbit. Such an end exists because if not there would
		be a finite-type $G$--non-displaceable subsurface and this would contradict $G$
		being infinite. Then the closure of the orbit $\Lambda e$ contains both
		$e_1$ and $e_2$. Let $\gamma$ be a simple closed curve separating $e_1$ from
		$e_2$, and denote by $L_\gamma\subset\Lambda e$ the set of ends to the left
		of $\gamma$. Define $\phi\colon \Lambda\to\Z$ by:
		\[
		\phi(g)=|g^*(L_\gamma)\setminus L_\gamma|-|L_\gamma\setminus g^*(L_\gamma)|.
		\]
		The map $\phi$ is a group homomorphism that measures the net change of how
		many ends in $\Lambda e$ move from the left of $\gamma$ to the right of
		$\gamma$. It has finite kernel, and since $\Lambda$ has index at most $2$ in
		$G$, we conclude that $G$ is virtually cyclic.
		
		\item If $S$ has finitely many ends, then the hypothesis on all finite-type
		subsurfaces to be displaceable, implies that $S$ has at most $2$ ends.
		By hypothesis $S$ is not a cylinder, hence the only possibilities are that $S$ is homeomorphic to either the Loch Ness monster punctured at
		one point or to Jacob's Ladder\,\footnote{The Jacob's Ladder is the orientable
			topological surface of infinite genus and whose end space consists exactly
			of two non-planar ends.}. In the former case, any finite-type subsurface containing the
		puncture would be~$G$--non-displaceable, leading to a contradiction. In the
		latter case, we can again construct a group homomorphism $\phi\colon \Lambda\to\Z$
		with finite kernel. For details, see~\cite{APV}*{Proposition~3.3}. \qedhere
	\end{itemize}
\end{proof}

We finish this section with an important result on Riemann surfaces that has
implications for translation surfaces with non-finitely generated fundamental
groups. Specifically, we obtain that the isometry group of any such translation
surface $M$ acts properly discontinuously on $M$. This result is a consequence
of the following theorem, which characterizes Riemann surfaces whose group of
analytic automorphisms does not act properly discontinuously.

\begin{theorem}
	\label{thm:BigAnalyticAut}
	Let $X$ be a Riemann surface such that the group $\Aut(X)$ of its analytic
	automorphisms does not act properly discontinuously on $X$. Then $X$ is
	conformally equivalent to one of the following surfaces:
	\begin{enumerate}
		\item the Riemann sphere $\widehat{\C}$,
		\item the plane $\C$,
		\item the half-plane $\mathbb{H}^2$,
		\item an annulus $\{z \in \C : r < |z| < R\}$, with $0 < r < R \leq \infty$,
		\item the punctured plane $\C^*$, or
		\item a torus $\R^2 / \Lambda$.
	\end{enumerate}
\end{theorem}

The proof follows from the Poincar\'e--Koebe uniformization theorem. A simple
proof can be consulted in~\cite{DHV1}*{Chapter~3}. As a consequence of this
theorem and~\cite{APV}*{Lemma~2.6} we deduce the following:

\begin{corollary} \label{cor:infinite_type_properly_discontinuously}
	The isometry group of any infinite-type translation or hyperbolic surface $M$ acts properly discontinuously on $M$.
\end{corollary}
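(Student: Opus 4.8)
The plan is to deduce \cref{cor:infinite_type_properly_discontinuously} from \cref{thm:BigAnalyticAut} by showing that an infinite-type translation or hyperbolic surface $M$ is never conformally equivalent to any of the six exceptional surfaces listed there, so that $\Aut(M)$ — and hence its subgroup $\Isom(M)$ — acts properly discontinuously. First I would recall that a translation surface, with the punctures at the singular points filled in as cone points or reinstated as punctures according to the chosen convention, carries an underlying Riemann surface structure (see \cref{rmk:defs-TS}), and likewise a hyperbolic surface is a Riemann surface via uniformization; in either case $\Isom(M) \le \Aut(M)$ because isometries of the flat or hyperbolic metric are in particular holomorphic or anti-holomorphic, and the orientation-preserving ones are holomorphic automorphisms. (If one wants to include orientation-reversing isometries, one passes to an index-$2$ subgroup, which does not affect proper discontinuity.)

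Next I would go down the list in \cref{thm:BigAnalyticAut} and rule out each case on topological grounds: each of $\widehat{\C}$, $\C$, $\mathbb{H}^2$, an annulus, $\C^*$, and a torus has \emph{finitely generated} fundamental group (trivial, trivial, trivial, $\Z$, $\Z$, $\Z^2$ respectively), whereas an infinite-type surface has by definition non-finitely-generated $\pi_1$. Hence $M$ cannot be homeomorphic — let alone conformally equivalent — to any surface in the list, so by \cref{thm:BigAnalyticAut} the group $\Aut(M)$ must act properly discontinuously on $M$. Then $\Isom(M) \le \Aut(M)$ inherits proper discontinuity: for any compact $K \subset M$, the set $\{g \in \Isom(M) : gK \cap K \neq \emptyset\}$ is contained in the corresponding finite set for $\Aut(M)$, hence finite.

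The one point that requires a little care — and where I would invoke \cite{APV}*{Lemma~2.6} as the excerpt suggests — is the passage between the surface $M$ with its metric/translation structure and the \emph{Riemann surface} to which \cref{thm:BigAnalyticAut} applies: in the translation-surface setting the cone points/punctures must be handled consistently so that the resulting object really is a Riemann surface in the sense of that theorem and so that metric isometries are genuinely holomorphic automorphisms of it. Concretely, \cite{APV}*{Lemma~2.6} is the statement that lets one identify the relevant automorphism group and transfer proper discontinuity back to the metric surface. I expect this bookkeeping — making sure ``infinite type as a topological surface'' is preserved under whichever completion/puncturing convention is in force, and that it forces non-finitely-generated $\pi_1$ of the underlying Riemann surface — to be the main (though minor) obstacle; the rest is the immediate observation that none of the six exceptional Riemann surfaces is of infinite type.
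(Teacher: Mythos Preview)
Your argument is correct and is essentially the paper's own: rule out the six exceptional Riemann surfaces of \cref{thm:BigAnalyticAut} on topological grounds (each has finitely generated $\pi_1$), then pass from $\Aut(M)$ to the subgroup $\Isom(M)$. The only minor remark is that the citation \cite{APV}*{Lemma~2.6} most likely serves to dispatch the hyperbolic case directly (since \cite{APV} is about hyperbolic surfaces) rather than to handle the cone-point bookkeeping you describe; but your self-contained reasoning already covers both cases without needing that lemma, so this does not affect correctness.
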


\section{Geometrical preliminaries: translation surfaces}

\label{sec:transsurf-preliminaries}
This section
provides basic definitions, examples, and geometric invariants of translation
surfaces, aimed at readers unfamiliar with the topic. We do not assume that $S$
is a finite-type topological surface.

Translation surfaces with non-finitely generated fundamental groups are natural
in the study of classical dynamical systems such as polygonal billiards,
periodic wind-tree models, or baker's maps. For a more detailed discussion of
these examples and the definitions presented here, readers are referred to
Chapter 1 of~\cite{DHV1}.

We begin with a geometrical definition in the spirit of $(G,X)$--structures
\emph{à la Thurston}. A \emph{translation atlas} is an atlas on $S$ where all
transition functions are translations of the plane. Every surface with a
translation atlas can be endowed with a flat metric by pulling back the
(translation invariant) Euclidean metric $dz$ in~$\C$. Consider the Riemannian
metric $(\C^*,g_\alpha)$ given in polar coordinates by
$g_\alpha \coloneqq (dr)^2+(\alpha rd\theta)^2$, $\alpha>0$. A
point $z$ in a surface $S$ endowed with a Riemannian metric $\mu$ is called a
\emph{conical singularity} of angle $2\pi\alpha$ if there exists a neighborhood
of $z$ isometric to a neighborhood of the origin in $(\C^*,g_\alpha)$. Conical
singularities are unavoidable when $S$ is compact of genus greater than 1.

\begin{definition}[Geometric definition of a translation surface]
	\label{def:geometrical-TS}
	A translation surface $M$ is a pair $(S,\mathcal{T})$ consisting of a connected
	topological surface $S$ and a maximal translation atlas on $S\setminus\Sigma$,
	where $\Sigma$ is a discrete subset of $S$ such that every $z\in\Sigma$ is a
	\emph{conical singularity}. The maximal translation atlas $\mathcal{T}$ is
	called a \emph{translation structure} on $S$, and its charts are referred to as
	\emph{translation charts}.
\end{definition}

Since the natural flat metric in a translation surface has constant curvature 0
in the complement of $\Sigma$, all conical singularities have an angle of $2\pi
k$ for some positive integer $k > 1$.

\begin{remark}
	\label{rmk:defs-TS}
	There are two other equivalent ways to define what a translation surface is: a
	constructive definition and an analytical definition. In the constructive
	definition, a translation surface is obtained by gluing countably many
	Euclidean polygons along parallel sides of the same length using translations.
	In the analytical definition, a translation surface is a pair $(X,\omega)$ where
	$X$ is a Riemann surface and~$\omega$ is a non-identically zero holomorphic
	$1$--form. A proof of the equivalence of the three definition can be found
	in~\cite{Wright15}. In this text, we adopt the definition that is more
	convenient in each context, which in almost all cases will be
	\cref{def:geometrical-TS} or the constructive one.
\end{remark}

\subsection{Affine maps and Veech groups}

Let $M_1$ and $M_2$ be translation surfaces with conical singularities $\Sigma_i
\subset M_i$, for $i=1,2$. We call a map $f\colon  M_1 \to M_2$ \emph{affine} if, in
translation charts, the restriction $f|\colon  M_1 \setminus \Sigma_1 \to M_2
\setminus \Sigma_2$ is an $\R$--affine map and $f(\Sigma_1) \subset \Sigma_2$.
We denote the group of affine automorphisms of $M$ \emph{that preserve
	orientation} by $\Aff(M)$.

Since the transition functions in $M \setminus \Sigma$ are translations, the
derivative $Df$ of an affine map $f$ is constant. This implies the existence of a well-defined
derivative group homomorphism
\[
Df\colon \Aff(M)\to\GL^+(2,\R).
\]
We define the subgroups $\Trans(M) \coloneqq Df^{-1}(Id)$ and $\Isom(M) \coloneqq
Df^{-1}(\SO(2,\R))$. The affine maps in $\Trans(M)$ and $\Isom(M)$ are called
\emph{translations} and \emph{isometries}, respectively. The image of $Df$ in
$\GL^+(2,\R)$ is called the \emph{Veech group} $\Gamma(M)$ of $M$. We have an exact
sequence
\[
1\to\Trans(M)\to\Aff(M)\to \Gamma(M)\to 1.
\]

\begin{lemma}
	\label{lemma:isom-into-map}
	Let $S$ be a surface which has either negative Euler characteristic or is of infinite type. Then for any translation structure on $S$, the natural map $\Isom(M)\to\Map(S)$, that assigns to each isometry of~$M$ its
	mapping class, is an injective homomorphism.
\end{lemma}

\begin{proof}
	The translation surface $M$ has a natural Riemann surface structure. Indeed,
	there is a unique analytic extension of the translation structure of
	$M\setminus\Sigma$ to a Riemann surface structure $R_M$ on $M$. Moreover,
	$\Isom(M)< \Aut(R_M)$, where the latter denotes the group of conformal
	automorphisms of $R_M$. To conclude, note that every conformal automorphism
	of $f\in \Aut(R_M)$ which is isotopic to the identity is actually
	$\Id_{R_M}$. This is because a (properly normalized) lift of $f$ to the
	universal cover $\widetilde{R_M}$ extends to the boundary by the identity on
	the limit set. Therefore, the lift of $f$ is a Möbius transformation that
	fixes three points on the boundary.
\end{proof}

As a corollary, we get that the natural map $\Trans(M)\to\Map(S)$ that assigns
to each translation of $M$ its mapping class, is an injective homomorphism.

\subsection{Translation flow, surgeries, and linear actions}
\label{sssec:flow-surgeries-action}
For any translation surface $M$ and $\theta \in \R/2\pi\Z$, the constant vector
field $e^{i\theta}$ in $\C$ can be pulled back to a constant vector field
$X_\theta$ on $M \setminus \Sigma$. For any $z \in M \setminus \Sigma$, let
$\gamma_z\colon  I \to M$ be the maximal integral curve of $X_\theta$ with initial
condition $\gamma_z(0) = z$. The map~$t \to \gamma_z(t)$ defines a local action
of $\R$ on $M$ called the \emph{translation flow} on $M$ in direction $\theta$.
A partial or total orbit of the translation flow is called a \emph{geodesic} of
$M$. If $I$ has finite length, then we call $\gamma_z$ or its image in~$M$ a
\emph{saddle connection} in direction $\theta$. Typically, a saddle connection
is a trajectory of the translation flow joining (not necessarily distinct)
points in $\Sigma$. If $\gamma$ is a saddle connection in direction $\theta$ of
length $|\gamma|$, then the vectors~$\pm|\gamma|e^{i\theta}\in\C$ are called
the \emph{holonomy vectors associated with $\gamma$}.

To prove our main results, we use a particular surgery on translation surfaces
called \emph{gluing along slits}. We describe this procedure in what follows.
Let $\gamma_i \subset M_i$, $i = 1,2$, be two parallel geodesic segments of the
same length. The gluing of $M_1$ to $M_2$ along the pair ${\gamma_1, \gamma_2}$
is the translation surface $M'$ obtained by cutting~$M_i$ along $\gamma_i$ and
then gluing them back by identifying, using a translation, the left side of
$\gamma_1$ to the right side of~$\gamma_2$ and vice versa. This procedure is
illustrated in \cref{fig:slits}.

\begin{figure}[tb]
	\includegraphics[scale=.8]{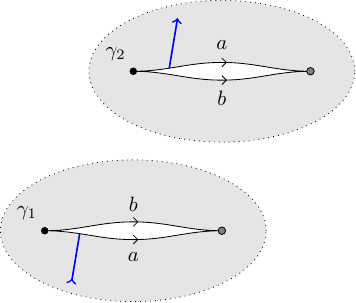}
	\caption{Gluing along two slits with a geodesic depicted in blue.}
	\label{fig:slits}
\end{figure}
The translation surface $M'$ inherits from the gluing two conical singularities
of angle $4\pi$ which appear as extremities of a saddle connection parallel to
$\gamma_i$ of length $|\gamma_i|$.

\begin{remark}
	\label{rmk:gluing-infinite-slits}
	In the proof of our main results, we consider the following cases or variations
	of the aforementioned surgery:
	\begin{enumerate}
		\item  Let $\gamma_1$ and $\gamma_2$ be two infinite parallel rays in two different copies $P_1$ and $P_2$ of the Euclidean plane, both departing in the respective origins. The gluing of $P_1$ and $P_2$ along $\gamma_1$ and
		$\gamma_2$ is defined analogously as when~$\gamma_1$ and $\gamma_2$ are finite-length geodesic segments.
		The resulting surface~$M$ is by construction a $2:1$ covering of the Euclidean plane,
		ramified over the origin. In particular, $M$ is homeomorphic to the
		Euclidean plane.	
		\item The gluing of two copies $P_1$ and $P_2$ of the Euclidean plane along
		two finite-length slits results in a translation surface $M$ that is
		homeomorphic to an annulus.
		\item The gluing of two copies $P_1$ and $P_2$ of the Euclidean plane along
		two infinite families $\{\gamma_{i,j}\}_{j\in\N}$ of parallel slits of the
		same length, with the property that for each $i=1,2$ the set
		$\{\gamma_{i,j}\}_{j\in\N}$ is a closed subset of the plane, is homeomorphic
		to the Loch Ness monster.
	\end{enumerate}
\end{remark}

The group $\GL^+(2,\R)$ acts
naturally on the set of all translation surfaces: If $(S,\mathcal{T})$ is a
translation surface as in the geometric definition,
$\mathcal{T}=\{\phi_i\colon U_i\to\C\}_{i\in I}$ and $A\in\GL^+(2,\R)$, then
$A(S,\mathcal{T})=(S,A\mathcal{T})$, where $A\mathcal{T}=\{A\circ\phi_i\colon U_i\to\C\}_{i\in I}$. Here $A\circ\phi_i$ denotes
the \emph{post}-composition of $\phi_i$ with the linear transformation of the
plane defined by the matrix $A$. Note that $A\mathcal{T}$ is indeed a
translation atlas because:
\[
((A\circ\phi_j)\circ(A\circ\phi_i)^{-1})(z) = A\circ(\phi_j\circ\phi_i^{-1})(A^{-1}(z)) = A(A^{-1}(z)+c) = z + A(c)
\]
In the context of finite-type translation surfaces, this action is a fundamental
ingredient for the study of the dynamics of the translation flow. For a detailed
discussion of this action in the context of finite-type surfaces, see Section 3
in \cite{Wright15}.

\subsection{The end-grafting construction}
\label{ssec:Maluendas-Randecker}

In this section, we present the end-grafting construction, which is used to
prove several results in \cref{sec:proofs-isometries,sec:proofs-Veech}.
We begin with a more abstract statement on the existence of translation structures on most Riemann surfaces.

\begin{proposition}
	\label{prop:translation_structure_Riemann_surface}
	Every orientable surface $S$ which is not homeomorphic to the sphere $\mathbb{S}^2$ admits a translation structure.
	\begin{proof}

		Any surface $S$ can be triangulated and hence admits a Riemann surface structure
		$X=X(S)$, see Section 46A in~\cite{AhlforsSario60}. It is then sufficient to
		show that if $X$ is not homeomorphic to the sphere $\mathbb{S}^2$, then~$X$
		admits a non-identically zero holomorphic $1$--form $\omega$. If $X$ is compact
		of genus $0<g<\infty$ then the space of holomorphic $1$--forms is a complex
		vector space of dimension $g$. If $X$ is non-compact then, as known
		from complex geometry, any holomorphic vector bundle on $X$ is holomorphically
		trivial. In particular, it is possible to extract a non-vanishing section
		$\omega$ of the canonical bundle on $X$ (that is, a never vanishing
		holomorphic~$1$--form). The desired translation structure is $(X,\omega)$ and the
		charts are given by local integration of the~$1$--form~$\omega$.
	\end{proof}
\end{proposition}

The end-grafting construction described by Maluendas and Randecker
guarantees certain geometric constraints that
\cref{prop:translation_structure_Riemann_surface} can not certify. We now
provide a brief overview of this construction; see Theorem 2
in~\cite{Randecker2016} for a detailed description.

The idea is the following: Given two closed, non-empty subsets $E'\subset E$ of the Cantor
set, we construct a translation surface $M$ such that $E^g(M)\subset E(M)$ is homeomorphic to
$E'\subset E$.
To achieve this, we start with a rooted infinite tree $T_E$ with end space
homeomorphic to $E$, where each vertex has either degree $2$ or~$3$. For
illustration purposes, we assume that each edge has length 1. We then define a
countable family of rays in $T_E$ which will be used to construct $M$.  These rays cover the whole tree $T_E$ and are disjoint except in the vertices, see \cref{fig:MR-construction} for illustration.

\begin{figure}
	\centering
	\def\svgwidth{.9\textwidth}
	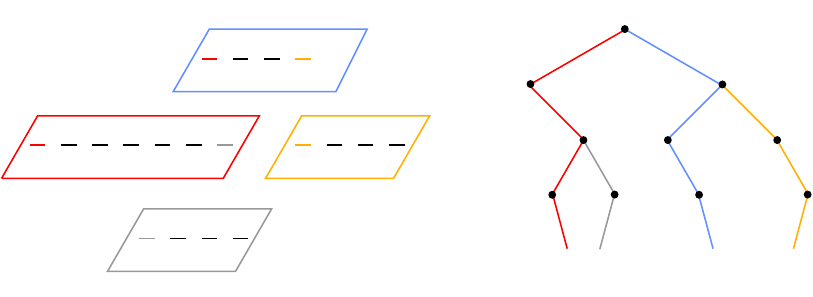
	\caption{Some of the gluings in the end-grafting construction.}
	\label{fig:MR-construction}
\end{figure}

Let $v_0$ be the root of $T_E$. We begin with an infinite injective ray $\gamma_0\colon [0,\infty)\to T_E$ such that
$\gamma_0(0)=v_0$ and $\{\gamma_0(0), \gamma_0(1), \gamma_0(2), \dots\}$ is the set of vertices on~$\gamma_0$.
For every $n\geq 1$,
we choose an edge $e_n$ in the complement of $\cup_{i=0}^{n-1}\operatorname{im}(\gamma_i)$ that is closest to $v_0$.
Let $v_n$ be the vertex of $e_n$ which is closer to $v_0$ and choose an infinite ray
$\gamma_n\colon [0,\infty)\to T_E\setminus\cup_{i=0}^{n-1}\operatorname{im}(\gamma_i)$ starting at
$v_n$ such that $\{v_n=\gamma_n(0), \gamma_n(1), \gamma_n(2), \dots\}$ is the set of vertices on~$\gamma_n$.
Proceeding inductively, we
obtain a countable family $\Gamma_E \coloneqq \{\gamma_n\}_{n\geq 0}$ whose union is~$T_E$.

To each $\gamma_n\in\Gamma_E$, we associate a Euclidean plane $P_n$ with
coordinates $(x,y)$. We define the following families of disjoint horizontal
slits of length 1 in $P_n$:
\begin{itemize}
	\item $s(n,k)$ is the slit with extremities $(6k,0)$ and $(6k+1,0)$,
	\item $t(n,k)$ is the slit with extremities $(6k+2,0)$ and $(6k+3,0)$, and
	\item $t'(n,k)$ is the slit with extremities $(6k+4,0)$ and $(6k+5,0)$.
\end{itemize}
where $k$ ranges over all positive integers. Now perform the following gluings
on $\sqcup_{n\geq 0} P_n$:
\begin{itemize}
	\item By construction, for every $n\geq 1$, there exists a unique $n'\geq 0$
	such that $\gamma_n(0)=\gamma_{n'}(l)$ for some~$l\in\N$. Then glue the slit
	$s(n,0)$ to $s(n',l)$. Forget all slits $s(n,k)$ which are not glued. The result
	is a genus zero translation surface $M'$. If $E^g=\emptyset$, the construction
	stops here.
	
	There is an embedding $i\colon T_E\hookrightarrow M'$. Indeed, let $T_n \colon  \gamma_n \to
	[0,\infty)$ be the linear map whose image is the $x$--axis of $P_n$ and that
	sends for each $n\geq 0$ the edge $\gamma_n(0)\gamma_n(1)$ to the segment
	$[0,6]\subset P_n$. Then the family $\{T_n\}_{n\geq 0}$ can be glued in a
	natural way together to define $i$. Note that $i$ covers all glued slits and is
	a homeomorphism at the level of end spaces.
	\item For each $e\in E'$, let $\gamma_e\colon [0,\infty)\to T_E$ be an infinite
	injective ray starting at $v_0$, such that $\gamma_e^{-1}(V(T_E))=\Z_{\geq 0}$,
	and whose end is precisely $e$. We glue $t(n,k)$ to $t'(n,k)$ whenever these
	slits are contained in the image of $i(\gamma_e)$. We denote by $M$ the
	resulting translation surface obtained by these gluings. We forget all slits
	$t(n,k)$ and $t'(n,k)$ which are not glued.
\end{itemize}

As shown in~\cite{Randecker2016}, the resulting translation surface $M$ has the
property that $E^g(M)\subset E(M)$ is homeomorphic to $E'\subset E$. Moreover, the end-grafting construction proves the following result:

\begin{proposition}
	\label{prop:MR-construction}
	Let $E' \subset E$ be two closed, non-empty subsets of the Cantor set. Then there exists a translation surface $M$ such that $E^g(M) \subset E(M)$ is homeomorphic to $E' \subset E$.
	Moreover, we can choose $M$ such that it has the following properties:
	\begin{enumerate}
		\item All singularities have cone angle equal to $4\pi$.
		\item All saddle connections in $M$ are horizontal and have length $1$.
		\item  It is always possible to embed a closed half-plane
		$H_0\hookrightarrow P_0$ such that $\partial H_0$ does not intersect any of
		the slits used in the construction.
	\end{enumerate}
\end{proposition}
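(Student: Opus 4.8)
The plan is to let $M$ be the translation surface produced by the end-grafting construction recalled above. With this choice, the existence statement --- a translation surface $M$ with $E^g(M)\subset E(M)$ homeomorphic to $E'\subset E$ --- is precisely the content of Theorem~2 of~\cite{Randecker2016}; moreover $M$ is connected because the $s(n,0)$--gluings realize the connectedness of the tree $T_E$, while the $t$--$t'$--gluings cannot disconnect it. So the remaining task is to read properties (1)--(3) off the explicit description of the construction.

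For (1), I would argue that $M$ carries a genuine translation structure away from the endpoints of those slits that actually get glued, since each plane $P_n$ is Euclidean and the only identifications are slit gluings. A glued slit $\sigma$ is identified, by a single translation, with exactly one other glued slit $\sigma'$ (its partner $s(n',l)$ in an $s$--gluing, or its partner $t'(n,k)$ in a $t$--$t'$--gluing), and the prescription ``left side to right side and vice versa'' matches each endpoint of $\sigma$ with one endpoint of $\sigma'$. The local model at such a point is the one recalled in \cref{sssec:flow-surgeries-action}: going once around accumulates the angle $2\pi+2\pi=4\pi$. Since the slits $s(n,k),t(n,k),t'(n,k)$ occupy the pairwise disjoint, mutually non-adjacent intervals $[6k,6k+1]$, $[6k+2,6k+3]$, $[6k+4,6k+5]$ on the $x$--axis of $P_n$, distinct glued slits never share an endpoint; hence every conical singularity of $M$ arises in this way and has cone angle exactly $4\pi$.

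Property (3) is immediate from the same picture: in $P_0$ every slit --- glued or forgotten --- lies on the half-line $\{(x,0):x\ge 0\}$, so the closed half-plane $H_0\coloneqq\{(x,y)\in P_0:x\le -1\}$ has boundary $\partial H_0=\{x=-1\}$ disjoint from all of these slits. As $H_0$ moreover misses every gluing locus, it embeds isometrically into $M$.

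The point demanding the most care --- and where I expect the main obstacle to lie --- is (2). Every slit is a horizontal segment of length $1$, and since both slits of any glued pair sit on the $x$--axis, each gluing map is a horizontal translation; hence within each $P_n$ the horizontal foliation of $M$ is the standard one, and all gluings and all singularities of $M$ lie on the images of the $x$--axes. Now suppose $\delta$ were a non-horizontal saddle connection, say issuing from a singularity $p$. Near $p$ the curve $\delta$ is a straight non-horizontal segment inside some $P_n$ leaving a point of the $x$--axis, so its $y$--coordinate in $P_n$ is strictly monotone and never returns to $0$; therefore $\delta$ never crosses another slit, stays in $P_n$ forever, escapes every compact set, and never reaches a second singularity --- a contradiction. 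So every saddle connection of $M$ is horizontal. That a horizontal saddle connection must have length exactly $1$ --- which requires keeping track of the slits that are \emph{not} glued --- is part of the analysis of~\cite{Randecker2016}, and I would invoke it rather than reprove it here. Together these points give the proposition.
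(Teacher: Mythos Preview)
Your argument is correct and matches the paper's approach: the paper gives no separate proof of this proposition, treating it as an immediate consequence of the end-grafting construction just described, and your explicit verification of (1)--(3) simply fills in the details the paper leaves implicit. Your caution on the ``length~$1$'' clause of (2) is well-placed --- the paper's own later applications (in the proofs of \cref{thm:realization-isometries-translatable} and \cref{thm:uncountable-veech-no-restrictions}) invoke this proposition but use only that saddle connections are horizontal of \emph{integer} length, which is what the forgotten-slit bookkeeping actually yields.
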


\section{Proof of main results regarding isometry groups}
\label{sec:proofs-isometries}
In this section, we present the proofs for our results concerning the
isometry groups of translation structures on infinite-type surfaces. We first
provide the proofs of the results with shorter arguments.

\begin{proof}[Proof of  \cref{thm:realization-isometries-doubly-pointed}]
	Recall that we want to show that, for any infinite-type surface $S$ whose
	end space is doubly pointed and any translation structure $M$ on
	$S$, the group $\Isom(M)$ is virtually cyclic. From
	\cref{cor:infinite_type_properly_discontinuously}, we know that the isometry
	group $\Isom(M)$ acts properly discontinuously on $M$.  Since the end space
	of $M$ is doubly pointed, by \cref{thm:topological-restrictions-homeo} (3),
	we have that $\Isom(M)$ is virtually cyclic.
\end{proof}

\begin{proof}[Proof of \cref{thm:non-displaceable-implies-finite-isometries}]
	Recall that we want to show that any infinite-type translation surface $M$
	having a non-displaceable subsurface of finite-type has to have a finite group
	of isometries. As in the preceding proof, from
	\cref{cor:infinite_type_properly_discontinuously}, we know that the isometry
	group $\Isom(M)$ acts properly discontinuously on $M$. Hence, by
	\cref{thm:topological-restrictions-homeo}\,(2), $\Isom(M)$ must be finite.
\end{proof}

\subsection{Overview of the constructions}
\label{ssec:overview-constructions}
Before diving into the details of the proofs of the remaining results for
isometry groups, let us give an overview of the constructions we will use.

We begin by using \cref{thm:topological-restrictions-homeo}, which
establishes that $\Isom(M)$ is always countable. Next, we demonstrate that for
any infinite countable group $G$, there exists a translation structure~$M$ on
$S$ such that $\Isom(M)$ is isomorphic to $G$. Our approach is to modify a
variant of the construction outlined in~\cite{APV}*{Section~4.2} to the context
of translation surfaces, which we will now detail.

We start with a Cayley graph $C_G$ of $G$ and a translation surface~$M_{\Id}$
that corresponds to the identity $\Id \in G$. This graph serves as a
\emph{blueprint} for the subsequent steps. For every vertex $g\in G$ of $C_G$,
we consider a (potentially altered) copy $M_g$ of $M_{\Id}$. Then, using
translations and following specific rules depending on the edges of $C_G$, we
glue the elements in $\sqcup_{g\in G} M_g$ along slits. The resulting
translation surface $M$ is without boundary. The end space of $M$ and $\Isom(M)$
are determined by~$M_{\Id}$ and the gluing process.

Note that the specific details of the construction are contingent on the desired
outcome.

\subsection{Proof of \cref{thm:realization-isometries-selfsimilar}}
\label{ssec:proof-realization-isometries-selfsimilar}

We present first a detailed constructive proof which will be later
extended to obtain our main results about Veech groups. At the end of this
section, we present an argument to deduce
\cref{thm:realization-isometries-selfsimilar} from~\cite{APV}*{Theorem~6.2}. We thank the anonymous referee who provided the generalities of this argument.

In the context of \cref{thm:realization-isometries-selfsimilar}, the
general construction outlined in \cref{ssec:overview-constructions} takes the
following form.

\subsubsection*{Blueprint} Let $G$ be a countable group and $C_G$ be the \emph{complete Cayley graph} of $G$. That is, the vertex set~$V(C_G)$ of $C_G$ consists of all elements of $G$, and
for each ordered pair $(g,h)\in G^2$ with $h\neq \Id$, there is a directed edge
from~$g$ to $gh$ labeled by $h$. Since $C_G$ is a complete, labeled Cayley graph, the
action by left multiplication of $G$ on itself defines a representation $G\to\Aut(C_G)$ which is an isomorphism.

\subsubsection*{Vertex surface $M_{\Id}$} Let $x_\infty\in E(S)$ be the star
point of $E(S)$ (see \cref{rmk:star-point} and
\cref{thm:self-similar-radially-symmetric}). In particular, the set
$E(S)\setminus \{x_\infty\}$ can be decomposed as a disjoint union
$\sqcup_{n\in\mathbb{N}}E_n$, where $\{E_n\}_{n\in\N}$ is a collection of
pairwise homeomorphic subspaces of $E(S)$, and $x_\infty$ is in the closure of
$E_n$ for every $n$. Furthermore, since~$g(S)>0$ and every finite-type
subsurface in $S$ is displaceable, we have $g(S)=\infty$. Finally, as $E(S)$ is
self-similar, the star point $x_\infty$ belongs to $E^g(S)$.

Let $\overline{E_1} = E_1\sqcup\{x_\infty\}$ and $M_{\Id}'$ be the translation
surface obtained from the end-grafting construction applied to $ \overline{E_1} \cap
E^g(S)\subset\overline{E_1}$. We make the following convention: the ray
$\gamma_0$ used in the end-grafting construction corresponds to
$x_\infty\in\overline{E_1}$. We denote by $x_{\infty,\Id}$ the element of
$E(M_{\Id}')$ that corresponds to $x_\infty$.

By \cref{prop:MR-construction}, we can choose an upper half-plane $H_0$ in
$M_{\Id}'$ with a coordinate system $\mathbb{R} \times [0,\infty)$ which is
completely contained in $P_0$ and does not intersect any of the slits used in
the end-grafting construction.
We enumerate the elements of $G$ as $\{g_0, g_1, \dots\}$. For each element
$g_n$, we construct two vertical infinite rays in $H_0$ from $(4n,0)$ to
$x_{\infty,\Id}$ and from $(4n+1,0)$ to $x_{\infty,\Id}$, called
$s_{g_n,\text{in}}$ and $s_{g_n,\text{out}}$.

We continue by performing surgery on $M_{\Id}'$, in order to destroy all
possible non-trivial isometries by creating a distinguished conical
singularity.  Let $P$ be a convex $20$--gon representing a translation
surface~$M'$ with only one conical singularity $\sigma_{\Id}$. This means that
$M'$ is obtained from $P$ by identifying sides in $\partial P$ using
translations. Since a generic translation surface of genus 5 has trivial
isometry group, we can ensure that $\Isom(M')$ is trivial. Moreover, without
loss of generality, we can rescale~$P$ using a homothety so that the diameter of
$P$ is less than $1$.

Next, we consider an isometric embedding $i\colon P\hookrightarrow H_0$ such that the
image is at a large distance (for instance, a googol, which is a 1 followed by 100 zeroes) from $\sqcup_{n\geq
	0}\{{s_{g_n,\text{in}},s_{g_n,\text{out}}}\}$ and all $4\pi$--singularities
arising from the end-grafting construction. Removing the interior of~$i(P)$ from $H_0$ and
performing the edge identifications that define $M'$ on $M_{\Id}'$, we obtain a
new translation surface~$M_{\Id}$ with $\sigma_{\Id}$ as its only conical
singularity with total angle different from $4\pi$.

We claim $\Isom(M_{\Id})$ is trivial. Indeed, since $i(P)$ is a googol away from
all slits and singularities and $P$ is convex, the saddle connections in
$M_{\Id}$ defined by the sides of $P$ are the only ones of length less than~$1$
and hence form an $\Isom(M_{\Id})$--invariant set. Furthermore, the polygon $P$
can be chosen such that no two non-parallel sides have the same length. But then
any isometry would have to fix two non-parallel saddle connections of different
lengths. This implies that $\Isom(M_{\Id})$ is trivial.

\subsubsection*{Construction of the surface $M$ by gluings} For every $g\in G$,
let $M_g$ denote a copy of $M_{\Id}$. For every directed edge with label $h$
between the vertices $g$ and $gh$ in $C_G$, we glue  $s_{h,\text{out}}$ in $M_g$
to $s_{h,\text{in}}$ on~$M_{gh}$. We denote the resulting surface $M$.

We recall from \cref{rmk:gluing-infinite-slits} that the gluing of two copies of $\mathbb{R}^2$ along two infinite slits
is homeomorphic to another copy of~$\mathbb{R}^2$. By applying this fact each time we
perform one of the gluings defined above, we deduce that all the points
$\{x_{\infty,g}\}_{g\in G}$ merge into a single point $y_\infty \in E(M)$.

\begin{lemma}\label{lemma:isometries-are-correct}
	$\Isom(M)$ is isomorphic to $G$.
\end{lemma}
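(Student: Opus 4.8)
The plan is to prove the two inclusions $G \hookrightarrow \Isom(M)$ and $\Isom(M) \hookrightarrow G$ separately, the first being essentially by construction and the second being the main work.

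\medskip
\noindent\textbf{The easy inclusion.} First I would observe that left multiplication of $G$ on itself acts on the complete Cayley graph $C_G$ by graph automorphisms, and that this action is compatible with the gluing pattern: multiplying on the left by $g_0$ sends the edge labeled $h$ from $g$ to $gh$ to the edge labeled $h$ from $g_0 g$ to $g_0 g h$. Since each vertex surface $M_g$ is an identical isometric copy of $M_{\Id}$, and the slits $s_{h,\text{out}}, s_{h,\text{in}}$ used for gluing are defined intrinsically inside each copy (independent of $g$), the map that sends $M_g$ isometrically to $M_{g_0 g}$ for every $g$ respects all the slit identifications and hence descends to a well-defined isometry of $M$. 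This gives an injective homomorphism $G \to \Isom(M)$; injectivity holds because a nontrivial $g_0$ permutes the vertex surfaces nontrivially (the copies $M_g$ are recognizable inside $M$, e.g.\ as the connected components obtained after cutting along all glued slits). I would also note that each such isometry is in fact a translation in local coordinates, matching the remark in the paper that $\Isom(M) = \Trans(M)$.

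\medskip
\noindent\textbf{The hard inclusion.} The main obstacle is showing that $\Isom(M)$ contains no isometries beyond those coming from $G$; equivalently, every isometry of $M$ permutes the vertex surfaces $\{M_g\}$ according to a left multiplication, and the only isometry fixing $M_{\Id}$ setwise is the identity. The key geometric fact to exploit is the distinguished conical singularity $\sigma_g$ in each $M_g$: by construction $\sigma_g$ is the unique conical point of $M_g$ whose cone angle differs from $4\pi$, and all other cone points have angle exactly $4\pi$ (from both the end-grafting construction and the slit gluings, see \cref{rmk:gluing-infinite-slits} and \cref{prop:MR-construction}). Since any isometry preserves cone angles, it must permute the set $\{\sigma_g : g \in G\}$. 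Next I would use the metric data around each $\sigma_g$: the sides of the convex icosagon $P$ give, inside $M_g$, a collection of saddle connections of length $< 1$ emanating near $\sigma_g$, and because $i(P)$ was embedded a googol away from every slit and every $4\pi$-singularity, these are the \emph{only} saddle connections of length $< 1$ anywhere in $M$ (all saddle connections from the end-grafting part have length exactly $1$, and slits are chosen long enough). So an isometry $\varphi$ of $M$ sends this short-saddle-connection set bijectively to itself, and since $P$ has no two non-parallel sides of equal length, $\varphi$ restricted near $\sigma_g$ is determined and in fact forces $\varphi$ to carry the whole "local polygon" $i(P)$ in $M_g$ to the corresponding $i(P)$ in $M_{\varphi(g)}$ rigidly.

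\medskip
\noindent\textbf{Finishing.} Once we know $\varphi$ permutes the $M_g$'s, inducing a bijection $\bar\varphi \colon G \to G$, I would argue that $\bar\varphi$ is a graph automorphism of $C_G$: the gluing slits $s_{h,\text{out}}, s_{h,\text{in}}$ are recognizable from the geometry (they are the infinite rays running to the unique end $y_\infty$, distinguished from the end-grafting rays by lying in the half-plane $H_0$ and by their position relative to the short saddle connections and relative to $\sigma_{\Id}$), so $\varphi$ must match up out-slits with in-slits edge-compatibly, meaning $\bar\varphi$ preserves adjacency and edge labels in $C_G$. By the standard fact that $\Aut(C_G)$ for the \emph{complete} Cayley graph with labeled edges is exactly $G$ acting by left multiplication, $\bar\varphi$ is left multiplication by some $g_0 \in G$. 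Composing $\varphi$ with the inverse of the constructed isometry realizing $g_0$, we get an isometry fixing every $M_g$ setwise; restricting to $M_{\Id}$ gives an isometry of $M_{\Id}$, which we showed is trivial (here I would need to check that an isometry of $M$ fixing $M_{\Id}$ setwise and respecting the slit structure restricts to a genuine isometry of the closed-up surface $M_{\Id}$, using that the slit endpoints are among the $4\pi$-cone points and so are preserved). Hence $\varphi$ is the identity, proving $\Isom(M) \cong G$. The subtle point I expect to be delicate is making rigorous the claim that the vertex surfaces and the two families of slits (end-grafting slits versus gluing slits) are distinguishable by purely metric invariants of $M$; this is where the large-distance choices (the googol separation) and the length normalizations in \cref{prop:MR-construction} do the real work.
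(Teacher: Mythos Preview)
Your outline is correct in spirit and would ultimately work, but it takes a substantially longer route than the paper, and the detour is exactly where you flag uncertainty.

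Both you and the paper begin the same way: the embedding $G\hookrightarrow\Isom(M)$ is by construction, and any isometry must permute the set $\{\sigma_g\}_{g\in G}$ of non-$4\pi$ cone points, so after composing with some $T_{g_0}$ one may assume $\sigma_{\Id}$ is fixed. Both then observe that the sides of $P$ are the only short saddle connections, and since no two non-parallel sides have equal length, the composed isometry fixes each of them.

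At this point the paper finishes in one stroke: in a translation chart around a point on one of those fixed saddle connections, the map is the identity; but isometries of a translation surface are conformal (holomorphic) maps, and a conformal automorphism is determined by its restriction to any open set. Hence the map is the identity on all of $M$. No combinatorics, no slit-matching, no appeal to $\Aut(C_G)$.

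You instead try to promote ``fixes the polygon'' to ``permutes the full vertex surfaces $M_g$'', then to ``induces a labelled automorphism of $C_G$'', then invoke $\Aut(C_G)\cong G$, then restrict to $M_{\Id}$ and use $\Isom(M_{\Id})=\{\Id\}$. Each of these steps needs the gluing slits and the end-grafting slits to be distinguishable by purely metric invariants of $M$, and indeed the step ``$\bar\varphi$ preserves edge labels'' requires distinguishing $s_{g_n,\text{out}}$ from $s_{g_m,\text{out}}$ for $n\neq m$ inside $M$ --- this is the delicate point you yourself identify. It can be done, but it is work the paper simply does not need because of the analytic-continuation argument. The payoff of the paper's approach is that all of your ``Finishing'' paragraph collapses to two sentences.
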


\begin{proof}
	By construction, there is a natural embedding $G\hookrightarrow \Isom(M)$
	which sends $g \mapsto T_g$ where $T_g | \colon M_h \to M_{gh}$
	is the identity in
	any local coordinate. Now, let $T\in\Isom(M)$. Then, there exists $g\in G$
	such that~$T_g^{-1}\circ T(\sigma_{\Id})=\sigma_{\Id}$. The map $T_g^{-1}\circ
	T$ fixes any saddle connection joining $\sigma_{\Id}$ to itself,
	which are precisely those saddle connections that correspond to the sides of
	$P$.
	
	Let $(U,\varphi)$ be a sufficiently small chart of $M_{\Id}$ around a point
	on one of the aforementioned fixed saddle connections. Then, in the
	$(U,\varphi)$ coordinate, we have that $T_g^{-1}\circ T$ is the identity.
	Since isometries are conformal maps, they are completely determined by the
	image of an open set. Therefore, $T=T_g$, and we have shown that $\Isom(M)$
	is isomorphic to $G$.
\end{proof}

\begin{lemma}\label{lemma:topology-is-correct}
	$M$ is homeomorphic to $S$.
\end{lemma}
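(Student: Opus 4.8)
The plan is to verify the three Kerékjártó--Richards invariants of $M$ --- genus, end space, and the subspace of ends accumulated by genus --- and check that they match those of $S$, so that \cref{thm:classification-surfaces} gives the homeomorphism. First I would record what the building blocks contribute. Each vertex surface $M_{\Id}$ is, as a topological surface, the end-grafting surface for $\overline{E_1}\cap E^g(S)\subset\overline{E_1}$ with one extra handle glued in along the icosagon $P$ (replacing the interior of $i(P)\subset H_0$ by the genus-$5$ piece $M'$ adds finite genus and no ends), so $M_{\Id}$ has infinite genus, end space homeomorphic to $\overline{E_1}=E_1\sqcup\{x_\infty\}$, and every end of $M_{\Id}$ is accumulated by genus (this uses $g(S)=\infty$, which was derived from $g(S)>0$ together with all finite-type subsurfaces being displaceable, and uses that $x_\infty\in E^g(S)$). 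Cutting the two infinite vertical rays $s_{g_n,\mathrm{in}},s_{g_n,\mathrm{out}}$ out of $H_0$ and using them as slits does not change the homeomorphism type of $M_{\Id}$: a half-plane slit along a ray to infinity is a standard neutral operation (it is the one-ended piece of \cref{rmk:gluing-infinite-slits}\,(1) before doubling), so no new ends appear on a single $M_g$ before the gluings.

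Next I would analyze the gluing along $C_G$. The key local fact is \cref{rmk:gluing-infinite-slits}\,(1): gluing two copies of $\R^2$ along a pair of infinite rays from the origin yields again a plane, and in particular merges the two ``infinite'' ends into one. Applying this inductively over the (connected) graph $C_G$, all the points $\{x_{\infty,g}\}_{g\in G}$ are identified into the single end $y_\infty$, exactly as stated in the construction; no other ends are created or destroyed, since each gluing is supported in a small neighborhood of the rays, which lie a googol away from all the end-grafting slits, all $4\pi$-singularities, and the icosagon. Thus $E(M)=\big(\bigsqcup_{g\in G}E_g\big)\sqcup\{y_\infty\}$ where each $E_g$ is a homeomorphic copy of $E_1$, and $y_\infty$ lies in the closure of every $E_g$ (because $x_{\infty,g}\in\overline{E_g}$ in $M_g$ and gluing is a local operation away from $E_g$, while connectivity of $C_G$ forces the closures to meet at $y_\infty$). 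One checks that the $E_g$ are pairwise separated by clopen neighborhoods: within $M_g$ the set $E_g$ already has such a neighborhood missing $x_{\infty,g}$, and the gluings only touch neighborhoods of $x_{\infty,g}$. Hence $E(M)$ has exactly the radial-symmetry decomposition $E(M)\setminus\{y_\infty\}=\bigsqcup_{g\in G}E_g$ with each piece homeomorphic to $E_1$; since $G$ is countably infinite and $\{E_n\}_{n\in\N}$ are pairwise homeomorphic non-compact pieces whose closures meet only at $x_\infty$, this is precisely (up to relabeling the countable index set) the radially symmetric model of $E(S)$ from \cref{def:radially-symmetric}, so $E(M)\cong E(S)$ by \cref{thm:self-similar-radially-symmetric}. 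The same bookkeeping on the ends-accumulated-by-genus subspace shows $E^g(M)=E(M)$: every $E_g$ consists of ends accumulated by genus (inherited from $E^g(M_{\Id})$), and $y_\infty$ is a limit of genus as well; this matches $E^g(S)=E(S)$. Finally $g(M)=\infty=g(S)$, since already one $M_g$ has infinite genus and gluing along slits never decreases genus.

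The main obstacle I expect is the end-space bookkeeping: one must be careful that gluing infinitely many pairs of rays (indexed by the edges of the complete Cayley graph, so each $M_g$ has infinitely many slits $s_{g_n,\mathrm{in}}$ and $s_{g_n,\mathrm{out}}$) does not accidentally create extra ends, and that the resulting decomposition $E(M)\setminus\{y_\infty\}=\bigsqcup_{g\in G}E_g$ really is a disjoint union into clopen-separated pieces rather than something whose closure collapses further. The clean way to handle this is to exhibit, for each $g$, an explicit exhaustion of $M_g$ minus a neighborhood of $x_{\infty,g}$ by compact sets disjoint from all rays entering other vertices, so that the only ``escape route'' out of $M_g$ to another vertex surface is through $y_\infty$; the remaining verification that $E_g$ carries the subspace structure of $E_1$ as a nested pair $E_g\cap E^g(M)\subset E_g$ is then immediate from the construction of $M_{\Id}$, and \cref{thm:classification-surfaces} finishes the proof.
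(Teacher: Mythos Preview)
Your overall approach matches the paper's: both argue that the end space of $M$ decomposes radially with star point $y_\infty$ and pieces homeomorphic to $E_1$, using \cref{rmk:gluing-infinite-slits}\,(1) to show that gluing along infinite rays merges the $x_{\infty,g}$'s without creating new ends. The paper packages this as an explicit surjective embedding $\psi\colon E(S)\hookrightarrow E(M)$; your radial-symmetry bookkeeping amounts to the same thing.

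There is, however, a genuine gap in your treatment of ends accumulated by genus. You assert that ``every end of $M_{\Id}$ is accumulated by genus'' and conclude ``$E^g(M)=E(M)$ \dots this matches $E^g(S)=E(S)$'', but $E(S)=E^g(S)$ is \emph{not} a hypothesis of \cref{thm:realization-isometries-selfsimilar}. Indeed, the paragraph immediately following that theorem exhibits infinitely many surfaces satisfying its hypotheses with only one end accumulated by genus and infinitely many planar ends. What the hypotheses actually yield is $g(S)=\infty$ and $x_\infty\in E^g(S)$, nothing more; your deduction from ``$g(S)=\infty$'' to ``all ends are non-planar'' is false. The fix is to track the nested pair $E^g\subset E$ throughout rather than collapsing it: the end-grafting construction on $\overline{E_1}\cap E^g(S)\subset\overline{E_1}$ already produces the correct nested structure on $M'_{\Id}$; the icosagon surgery adds only finite genus and so leaves $E^g$ unchanged; and any genus created by the Cayley-graph gluings (coming from relations in $G$, cf.\ \cref{fig:GCExample}) accumulates only to $y_\infty$, which is already in $E^g$. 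With this adjustment each $E_g\cap E^g(M)\subset E_g$ is homeomorphic to $E_1\cap E^g(S)\subset E_1$ as a nested pair, and your radial assembly then gives the required homeomorphism of nested pairs $\bigl(E^g(M)\subset E(M)\bigr)\cong\bigl(E^g(S)\subset E(S)\bigr)$.
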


\begin{proof}
	For every $g\in G$, we have an embedding $\psi_g\colon E(M_g)\hookrightarrow
	E(M)$. Since $E(M_g)$ is a closed subset of the Cantor set, the image of
	each $\psi_g$ is a closed subset of $E(M)$. As mentioned before,
	$\psi_g(E(M_g))\cap\psi_h(E(M_h))=y_\infty$ for every $g\neq h$. Hence all
	the $\psi_g$'s can be glued together to define an embedding
	$\psi\colon E=\sqcup_{n\in\mathbb{N}}E_n\sqcup\{x_\infty\}\hookrightarrow E(M)$
	sending $x_\infty$ to $y_\infty$. We claim that the image of $\psi$ is
	$E(M)$. This follows from the fact that the gluing process used to construct
	$M$ does not produce any new end. We can think of each $M_g$ as a copy of
	$\mathbb{R}^2$ with some extra topology localized inside some unbounded subset $C$
	. The gluing of $M_g$ to $M_{gh}$ occurs along slits
	disjoint from $C$. We create no new ends when gluing $M_g$ to $M_{gh}$ because
	the gluing of two copies of $\mathbb{R}^2$ along two infinite slits produces
	a surface that is homeomorphic to another copy of $\mathbb{R}^2$, see
	\cref{rmk:gluing-infinite-slits}. When taking the limit of the gluing
	process, the number of ends also does not increase, so the image of $\psi$ is
	$E(M)$.
\end{proof}

The two previous lemmas finish the proof of
\cref{thm:realization-isometries-selfsimilar}.
\qed
\medskip

\begin{remark}\label{rem:deduction-proof}
	
	As mentioned in the beginning of
	\cref{ssec:proof-realization-isometries-selfsimilar}, for surfaces $S$ with
	self-similar end spaces and no planar ends, one can also directly deduce
	\cref{thm:realization-isometries-selfsimilar} from~\cite{APV}*{Theorem~6.2}
	as follows. Let $S$ be such a surface, $G$ be a countable group, and $Y$ be
	the hyperbolic structure on~$S$ given by~\cite{APV}*{Theorem~6.2} whose
	isometry group is isomorphic to $G$. Abusing the notation, let us write
	$G=\Isom(Y)$. If $S$ has one end, the surface $Y$ is the surface $X^G_S$
	from \cite[Section 3.1]{APV} and the quotient manifold $Y/G$ is a Loch Ness
	monster by \cite[Lemma 3.4, Lemma 3.9]{APV} and thus in particular of
	infinite genus. Otherwise $Y$ is the surface $Y^G_S$ from  \cite[Section
	4.2]{APV}. In this case, $Y$ has infinitely many ends since the space of
	ends is self-similar and the quotient $Y/G$ is a non compact infinite genus
	surface.
	
	By a detailed inspection of the constructions carried out
	in~\cite{APV}*{Section~3.1, Section~4.2}, one can see that the action of $G$
	on $Y$ is free. Given that $G$ always acts properly discontinuously on $Y$,
	the quotient map $Y \to Y/G$ is a covering of the infinite-genus hyperbolic
	surface $Y/G$. Let $X$ be the Riemann surface obtained from $Y/G$ by
	uniformization. Given that $X$ is not compact, any holomorphic vector bundle
	on $X$ is trivial. Let $\omega$ be a constant non-zero section of the
	canonical bundle of $X$. Then $M=(X,\omega)$ is a translation surface. The
	translation structure on $M$ can be pulled-back to $Y$ via the covering
	$Y\to Y/G$ to obtain a translation surface $\widetilde{M}$. Note that by
	construction $G$ acts on $\widetilde{M}$ by translations (as deck
	transformations), so that $G<\Isom(\widetilde{M})$. To force these groups to
	be equal, we pick a fundamental domain $N\subset\widetilde{M}$ for the
	action of $G$ by translations and mark in~$N$ a polygon $P$ similar to the
	situation when we construct the vertex surface $M_{\rm Id}$ in the
	constructive proof of \cref{thm:realization-isometries-selfsimilar}. Use the
	elements of $G$ to mark a polygon $gP$ on each copy $gN$, $g\in G,$ of $N$.
	Remove the interiors of all polygons $gP$ and identify opposite sides as
	before. The arguments in \cref{lemma:isometries-are-correct} also apply in
	this context and we can thus conclude that $G=\Isom(M)$.  Note that the
	argumentation presented in this paragraph does not apply to deduce results
	for Veech groups since in general $\Aff(M)$ does not act by isometries on
	$M$.
\end{remark}

\medskip

Let us now comment on why it is not trivial (at least to us) to drop the assumption of positive genus in the statement of
\cref{thm:realization-isometries-selfsimilar}. In \cref{fig:GCExample}, we depict four copies of the Euclidean plane and infinite slits on each copy which are labeled with capital letters from $A$ to $L$. Let $M$ be the translation surface obtained by gluing these four planes along infinite slits with the same label. Note that the genus of $M$ is different from zero since we can find non-separating curves such as the blue curve $\alpha$ illustrated in the figure. In general, such a curve may arise due to a non-trivial relation
in the group $G$. Hence, the process of attaching various instances of the
same (genus-zero) vertex surface along infinite slits can result in the formation of a
non-separating curve and hence of genus. This indicates that for arbitrary groups, the construction used in the proof of \cref{thm:realization-isometries-selfsimilar} does not guarantee at each of its steps that one produces a genus-zero surface.

\begin{figure}[tbh]
	\def\svgwidth{.9\textwidth}
	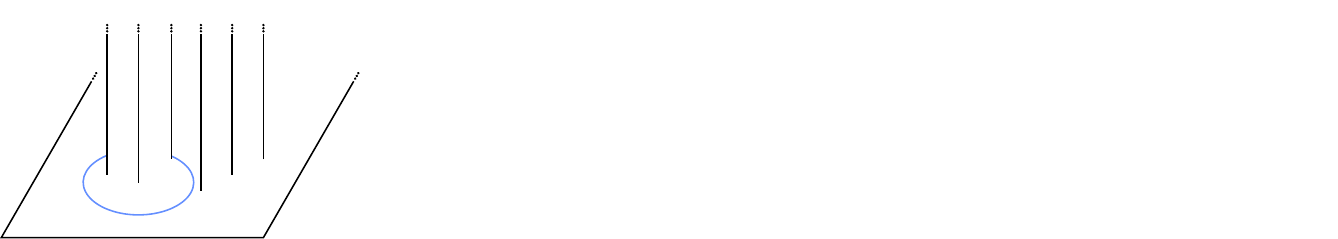
	\caption{A non-separating curve.}
	\label{fig:GCExample}
\end{figure}

However, \cref{thm:realization-isometries-selfsimilar} can be
extended for genus-zero surfaces provided that the group $G$ is~free.

\begin{proposition}\label{prop:genus-zeros-free-group}
	Let $S$ be an infinite-type surface of genus zero, where the end space is
	self-similar. Then, for every $m\geq 1$, there exists a translation structure
	$M$ on $S$ such that $\Isom(M)$ is a free group on $m$ generators.
\end{proposition}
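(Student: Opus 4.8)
The plan is to reuse the architecture of the proof of \cref{thm:realization-isometries-selfsimilar}, replacing the complete Cayley graph $C_G$ by the Cayley graph of the free group $F_m = \langle a_1, \dots, a_m\rangle$ with respect to its standard free generating set. The crucial point is that this Cayley graph is a tree, in particular it contains no cycles, so when we glue copies of the vertex surface along infinite slits following the edges of the graph, no non-separating curve of the type depicted in \cref{fig:GCExample} can appear. Thus genus zero is preserved under the gluing, and the genus-zero hypothesis on $S$ becomes compatible with the construction.

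First I would fix the vertex surface. Since $S$ has genus zero and self-similar end space, by \cref{thm:self-similar-radially-symmetric} its end space is radially symmetric with a star point $x_\infty$, and $E(S) = E^g(S)$ forces $E^g(S) = \emptyset$, so $x_\infty \notin E^g(S)$; we take $E_1 \sqcup \{x_\infty\} = \overline{E_1}$ and build $M_{\Id}'$ via the end-grafting construction of \cref{prop:MR-construction} applied to $\emptyset \subset \overline{E_1}$, using the convention that $\gamma_0$ corresponds to $x_\infty$. As before, \cref{prop:MR-construction}(3) furnishes an upper half-plane $H_0 \subset P_0$ disjoint from all slits, in which we carve out (for each generator $a_i$, $i = 1, \dots, m$) two disjoint vertical infinite rays $s_{a_i,\mathrm{in}}$ and $s_{a_i,\mathrm{out}}$ running to $x_{\infty,\Id}$, and we also perform the icosagon surgery far away from everything to create a single distinguished conical singularity $\sigma_{\Id}$ with cone angle $\neq 4\pi$ and with $\Isom(M_{\Id})$ trivial, exactly as in the proof of \cref{thm:realization-isometries-selfsimilar}. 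The only difference from that construction is that we need only $2m$ slits rather than countably many, since $F_m$ has finitely many generators.

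Next I would perform the gluings. For each $g \in F_m$ take a copy $M_g$ of $M_{\Id}$, and for each edge of the Cayley graph from $g$ to $ga_i$ glue $s_{a_i,\mathrm{out}}$ in $M_g$ to $s_{a_i,\mathrm{in}}$ in $M_{ga_i}$; call the result $M$. Left multiplication by $F_m$ acts on the Cayley graph, hence on $\sqcup_g M_g$ compatibly with the gluings, giving an embedding $F_m \hookrightarrow \Isom(M)$. That this is surjective is verbatim the argument of \cref{lemma:isometries-are-correct}: any isometry $T$ can be post-composed with some $T_g^{-1}$ so as to fix $\sigma_{\Id}$, hence to fix all the short saddle connections coming from the icosagon, hence (being conformal and determined by a chart) to be the identity. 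For the topology, the argument of \cref{lemma:topology-is-correct} shows the gluings create no new ends and merge all the $x_{\infty,g}$ into a single end $y_\infty$, so $E(M) \cong E(S)$; and I would argue that the genus of $M$ is zero because the dual graph of the decomposition $M = \bigcup_g M_g$ is the Cayley graph of $F_m$, which is a tree, so gluing along the slits in the pattern of a tree cannot produce any handle. Together with \cref{thm:classification-surfaces} this gives $M \cong S$.

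The main obstacle is the last point: making rigorous that gluing genus-zero pieces along a tree-indexed pattern of slit pairs yields a genus-zero surface. One clean way is to mimic \cref{lemma:topology-is-correct}: exhaust $F_m$ by balls $B_r$ in the word metric, observe that $\bigcup_{g \in B_r} M_g$ glued along the finite sub-tree is a finite connected union of planes glued along finitely many slit pairs arranged in a tree, and check by induction on $r$ (adding one vertex and one slit-pair gluing at a time, each of which glues a fresh plane $M_{ga_i}$ to the existing surface along a single pair of infinite slits — a move that by \cref{rmk:gluing-infinite-slits}(1) does not raise the genus) that this stays planar; then pass to the limit, noting genus can only fail to increase under such a direct limit. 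This is where one must be careful that the slits are \emph{infinite} (running out to the common end $x_\infty$), since gluing along a pair of finite slits would instead create genus.
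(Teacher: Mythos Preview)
Your overall strategy---use the Cayley graph of $F_m$ with respect to the free generators so that the blueprint is a tree and no non-separating curve is created by the gluings---is exactly right, and matches the paper's approach. However, there is a genuine gap in your vertex surface: the icosagon surgery you import from the proof of \cref{thm:realization-isometries-selfsimilar} \emph{adds genus}. Recall that carving out a convex icosagon $P$ and identifying its sides produces a genus-$5$ translation surface $M'$; performing that surgery inside $H_0$ therefore gives each $M_g$ genus~$5$, and the resulting $M$ has infinite genus regardless of the tree structure of the gluings. Your inductive argument that gluing along a tree of infinite slits preserves planarity is fine, but it only controls genus coming from the gluing pattern, not genus already present in the pieces.

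The paper fixes this by replacing the icosagon surgery with a genus-zero way of producing a distinguished singularity: add one more vertical infinite slit at $(0,0)$ in $H_0$ and glue in a finite-degree branched cover of a plane along it, creating a unique $6\pi$-singularity at $(0,0)$. This keeps $M_{\Id}$ planar. The argument that $\Isom(M)\cong F_m$ then has to be adjusted accordingly: one uses that the set of $6\pi$-singularities is $\Isom(M)$-invariant, and that the length-$4$ horizontal saddle connection from $(0,0)$ to the first slit endpoint $(4,0)$ in each $H_0$ must be permuted, to reduce to the identity on a chart as in \cref{lemma:isometries-are-correct}. (A minor aside: your sentence ``$E(S)=E^g(S)$ forces $E^g(S)=\emptyset$'' is garbled; you presumably meant that $g(S)=0$ forces $E^g(S)=\emptyset$.)
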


\begin{proof}
	
	The proof of this proposition involves a minor modification of the construction
	used in the proof of \cref{thm:realization-isometries-selfsimilar}. Here,
	we provide the details of the modifications required to obtain the desired
	result.

	\subsubsection*{Blueprint} Let $\mathcal{S} =\{g_1,\dotsc,g_m\}$ be a set
	of generators for $\mathbb{F}_m$ (in particular, without inverses in
	$\mathcal{S}$), and $C_G$ be the corresponding Cayley graph. The vertices of
	$C_G$ are elements of $G$, and for every $g\in G$ and $g_i\in \mathcal{S}$,
	$(g,g_i)$ is a directed edge from $g$ to $gg_i$ (with label $g_i$).
	
	\subsubsection*{Vertex surface $M_{\Id}$} We use a modified
	version of the end-grafting construction to define $M_{\Id}$. Recall that $E(S)\setminus \{x_\infty\}=\sqcup_{n\in\mathbb{N}}E_n$ and let $M'_{\Id}$ be
	the translation surface with end space homeomorphic to~$E_1 \sqcup \{x_\infty\}$ obtained by the
	end-grafting construction. By \cref{prop:MR-construction}, there is an embedded
	copy of a closed half-plane $H_0\hookrightarrow M'_{\Id}$.
	For each $g_n\in \mathcal{S}$, construct two vertical infinite slits in $H_0$
	from~$(4n,0)$ to $x_{\infty,\Id}$ and from~$(4n+1,0)$ to $x_{\infty,\Id}$,
	called $s_{g_n,\text{in}}$ and $s_{g_n,\text{out}}$. We add an infinite slit
	from $(0,0)$ upwards and glue in a finite degree covering of a plane that is
	ramified only over the origin and such that we obtain a unique singularity of total angle $6\pi$ at $(0,0)$. We denote the resulting surface by $M_{\Id}$.
	
	\subsubsection*{Gluings}
	For every $g\in G$, we let $M_g$ denote a copy of $M_{\Id}$, and glue them
	together using the same rules as defined in the proof of
	\cref{thm:realization-isometries-selfsimilar}.
	
	The proof that $M$ is
	homeomorphic to $S$ is similar to the proof of \cref{lemma:topology-is-correct},
	as we do not create genus because the group $G$ is free. As before, there is a
	natural embedding $G\hookrightarrow\Isom(M)$. This embedding is surjective.
	Indeed, observe first that $\{(0,0)\in H_0\subset M_g\}_{g\in G}$ is the set of
	singularities of total angle~$6\pi$ of~$M$, hence invariant by $\Isom(M)$. This
	implies that the set of saddle connections joining $(0,0)$ to $(4,0)$ in~$H_0$
	on each vertex surface $M_g$ have to be permuted by any element of $\Isom(M)$.
	This implies that any element of $\Isom(M)$ permutes the set of saddle connections joining $(0,0)$ to $(4,0)$ in $H_0$ on each vertex surface $M_g$. With similar arguments as in the proof of \cref{lemma:isometries-are-correct}, this implies that such an isometry has to come from an element in $G$.
	To conclude, we apply the arguments in the proof of
	\cref{lemma:isometries-are-correct}.
\end{proof}

\subsection{Proof of \cref{thm:realization-finite-groups}}
\label{ssec:proof-realization-finite-groups}

As in the proof of
\cref{thm:realization-isometries-selfsimilar}, we present a detailed
constructive proof which will be later extended to obtain our main results
about Veech groups. We remark that the argument given after the
constructive proof of \cref{thm:realization-isometries-selfsimilar} also
applies to deduce \cref{thm:realization-finite-groups}
from~\cite{APV}*{Theorem~6.2}.

We construct the translation surface $M$ following~\cite{APV}*{Section~3}, to
which we refer the reader for details. In the following, we describe the
generalities of the construction, emphasizing the differences from the
hyperbolic case.

\subsubsection*{Blueprint} Let $G$ be a finite group and  denote the complete
Cayley graph of $G$ by $C_G$.

\subsubsection*{Vertex surface $M_{\Id}$} Let $E=E(S)=E^g(S)$. Choose an
embedding $E\hookrightarrow\mathbb{S}^2$ and let $\mathbb{S}^2_E$ be the
complement of the image. Let $\{c_i\}_{i\in I}$ be a basis for
$H_1(\mathbb{S}^2_E,\Q)$ formed by pairwise disjoint curves. For each $i\in I$,
let $A_i$ be a regular neighborhood of $c_i$ such that $A_i\cap A_j=\emptyset$
for every $i\neq j$ in $I$ and define $S'$ as the complement of $\sqcup_{i\in
	I}A_i$ in $\mathbb{S}^2_E$. Each $A_i$ is homeomorphic to an open annulus. Let
$S'=\sqcup_{j\in J}S_j$ be the decomposition into connected components.

\begin{remark}\label{rmk:ends-decomposition}
	Each $S_j$ is a planar one-ended surface with $\alpha_j\in\N\cup\{\infty\}$
	boundary components. The union $\sqcup_{j\in J}E(S_j)$ forms a dense subset of
	$E(\mathbb{S}^2_E)$. Indeed, if there was an end not in the closure of $\sqcup_{j\in J}E(S_j)$,
	one could find a class in $H_1(\mathbb{S}^2_E,\Q)$ which cannot be expressed in terms of the base $\{c_i\}_{i\in I}$.
\end{remark}

We define a translation structure $M_{\Id}$ on $\mathbb{S}_E^2$
with additional slits for attachment according to our blueprint. This is done by
defining compatible translation structures with slits on each of the $S_j$ as
follows. For each~$j\in J$, consider a copy $P_j$ of $\mathbb{R}^2$ with its
standard Euclidean metric and a coordinate system $(x,y)$. Denote by
$s_j(x,y)\subset P_j$ the horizontal slit of length $\frac{1}{2}$ whose left
endpoint is $(x,y)$. Choose an enumeration $G=\{g_1,\dotsc,g_N\}$. We mark the
following families of slits in $P_j$:
\begin{itemize}
	\item $s_j(m,0)$, where $1\leq m\leq\alpha_j$.
	\item $s_j(l,k)$, where $1\leq l\leq N$ and $k\in\Z_{>0}$.
\end{itemize}
We continue to denote by $P_j$ the copy of the plane marked along the families
of slits defined above. For every~$j\in J$, let $\partial
S_j=\gamma_{j_i}\sqcup\dotsb\sqcup\gamma_{j_{\alpha_j}}$. Then, for every $i\in
I$, we define gluings in the family $\sqcup_{j\in J} P_j$ as follows: for every
$i\in I$, we have that $\partial A_i=\gamma_{j_m}\sqcup\gamma_{j'_{m'}}$ for
some $j,j'\in J$, $1\leq m\leq \alpha_j$ and $1\leq m'\leq \alpha_{j'}$. Then
glue~$P_j$ to $P_{j'}$ along the slits $s_j(m,0)$ and $s_{j'}(m',0)$. The result
of these gluings is a translation structure~$M_{\Id}'$ on~$\mathbb{S}_E^2$
having only conical singularities of angle $4\pi$ (with additional slits).
Indeed, we can identify the annulus $A_i$ with a small regular neighborhood of
the identified slits $s_j(m,0)=s_{j'}(m',0)$. Then the gluings we just defined
are the same, topologically, as gluing back all annuli in $\{A_i\}_{i\in I}$ to
$S'$.

To define the translation surface $M_{\Id}$ we follow a similar process as in
the proof of \cref{thm:realization-isometries-selfsimilar}. We begin by removing
the interior of a highly asymmetric polygon $P\subset M_{\Id}'$ which does not
intersect any of the slits defined before. Next, identify the opposite sides of
the polygon, which will create a single conical singularity in $M_{\Id}'$ whose
total angle is not $4\pi$. We denote the resulting translation surface as
$M_{\Id}$. It is important to note that, by construction, the group of isometries
$\Isom(M_{\Id})$ is trivial.

\subsubsection*{Gluings} For every vertex $g_l$ of $C_G$, consider a copy
$M_{g_l}$ of $M_{\Id}$. For each edge $(g_l,g_{l'})$ of $C_G$ from $g_l$ to~$g_l
g_{l'}$, we glue $M_{g_l}$ and $M_{g_l g_{l'}}$ as follows. For every
$g_l,g_{l'}\in G$, $j\in J$, and $m\in\Z_{>0}$, glue $s_j(l',2m)$ in $M_{g_l}$ to
$s_j(l',2m-1)$ in $M_{g_lg_{l'}}$. We denote the resulting translation surface
by $M$.

\begin{remark}
	\label{Remark:difference-hypcase}
	In the hyperbolic case, as detailed in~\cite{APV}*{Section~3}, edge surfaces
	are defined using tori with two boundary components. Furthermore, the boundaries
	of these tori exhibit varying lengths to represent the directed edges of the
	complete Cayley graph in the construction of the surface. However, in our
	construction, such tori are not necessary.
	
	In fact, the gluings defined above produce infinite genus (since we glue along
	infinite families of slits) that accumulates to each end of $M_{\Id}$ that
	corresponds to an end in $\sqcup_{j\in J}E(S_j)$. By
	\cref{rmk:ends-decomposition}, then, genus accumulates to all ends in
	$M_{\Id}$. Additionally, we will show that one can enforce $\Isom(M)$ to
	be $G$ by using the set of singularities of $M$ whose total angle is not $4\pi$,
	as in the proof of \cref{lemma:isometries-are-correct}.
\end{remark}

\begin{lemma}
	$\Isom(M)$ is isomorphic to $G$.
\end{lemma}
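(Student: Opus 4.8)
The plan is to mimic the proof of \cref{lemma:isometries-are-correct}, using the distinguished conical singularities to rigidify the construction. First I would record the easy inclusion: by construction, left multiplication gives an embedding $G \hookrightarrow \Isom(M)$, $h \mapsto T_h$, where $T_h$ restricts to the identity in local coordinates on each piece $M_g \to M_{gh}$; this uses that $C_G$ is the complete Cayley graph and that the gluing rules are equivariant with respect to this action. So it remains to show surjectivity, i.e., that every isometry $T \in \Isom(M)$ equals some $T_h$.

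The key step is to identify a $T$--invariant set of distinguished geometric features. Recall from the construction of the vertex surface that $M_{\Id}$ has exactly one conical singularity $\sigma_{\Id}$ whose total angle is \emph{not} $4\pi$ (all other singularities, coming from the end-grafting construction and the slit-gluings, have angle $4\pi$), and this $\sigma_{\Id}$ was obtained by removing a highly asymmetric polygon $P$ and identifying its opposite sides. Hence the set $\Sigma_{\neq 4\pi}(M) = \{\sigma_g\}_{g \in G}$ of singularities of $M$ whose total angle is not $4\pi$ is precisely indexed by $G$, and it is $\Isom(M)$--invariant since isometries preserve cone angles. Therefore, given $T \in \Isom(M)$, there is $g \in G$ with $T(\sigma_{\Id}) = \sigma_g$, so $T_g^{-1} \circ T$ fixes $\sigma_{\Id}$. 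Next I would argue that $T_g^{-1}\circ T$ fixes each saddle connection from $\sigma_{\Id}$ to itself: these are exactly the saddle connections corresponding to the sides of the asymmetric polygon $P$, and since $P$ can be chosen so that no two non-parallel sides have equal length, an isometry fixing $\sigma_{\Id}$ must fix each of these individually (it cannot permute them). Finally, picking a small chart around an interior point of one such fixed saddle connection, $T_g^{-1}\circ T$ is the identity there; since isometries of a translation surface are conformal and thus determined by their restriction to any open set, $T_g^{-1}\circ T = \Id$, so $T = T_g$.

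The main obstacle I anticipate is not in this rigidity argument — which is essentially identical to \cref{lemma:isometries-are-correct} — but in checking that the gluings genuinely produce a \emph{connected} surface $M$ on which $G$ acts (so that ``$T(\sigma_{\Id}) = \sigma_g$ for some $g$'' makes sense with the singularities still distinct and correctly indexed), and in making sure the asymmetric polygon $P$ is placed far enough from all slit families that the length-$<1/2$ saddle connections it creates are disjoint from the $4\pi$--singularities' saddle connections, so that $\Sigma_{\neq 4\pi}(M)$ really is canonically $G$. These are bookkeeping points that follow from the explicit construction, so I would state them briefly and then carry out the rigidity argument as above.

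\begin{proof}
By construction there is a natural embedding $G \hookrightarrow \Isom(M)$ sending $h \mapsto T_h$, where $T_h$ restricts to the identity in local coordinates as a map $M_g \to M_{gh}$ for every $g \in G$; this is well-defined and injective because $C_G$ is the complete Cayley graph of $G$ and the gluing rules defining $M$ are equivariant under left multiplication. It remains to prove that this embedding is surjective.

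Each vertex surface $M_{g}$ contains exactly one conical singularity $\sigma_{g}$ whose total angle is not $4\pi$: indeed, all singularities coming from the end-grafting construction and from the slit-gluings have cone angle $4\pi$, while $\sigma_{g}$ arises from removing the highly asymmetric polygon $P$ and identifying its opposite sides. Hence $\Sigma \coloneqq \{\sigma_g\}_{g \in G}$ is the set of all conical singularities of $M$ with total angle different from $4\pi$, and since isometries preserve cone angles, $\Sigma$ is $\Isom(M)$--invariant.

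Let $T \in \Isom(M)$. Then there is $g \in G$ with $T(\sigma_{\Id}) = \sigma_g$, so $T_g^{-1} \circ T$ fixes $\sigma_{\Id}$. The map $T_g^{-1}\circ T$ permutes the saddle connections joining $\sigma_{\Id}$ to itself, which are precisely those corresponding to the sides of the polygon $P$. Since $P$ was chosen so that no two non-parallel sides have the same length, $T_g^{-1}\circ T$ must fix each such saddle connection individually. Let $(U,\varphi)$ be a sufficiently small chart of $M_{\Id}$ around an interior point of one of these fixed saddle connections; in this chart $T_g^{-1}\circ T$ is the identity. Since isometries of a translation surface are conformal, they are determined by their restriction to any open set, so $T_g^{-1}\circ T = \Id$, that is, $T = T_g$. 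Therefore $\Isom(M)$ is isomorphic to $G$.
\end{proof}
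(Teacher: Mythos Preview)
Your proposal is correct and takes essentially the same approach as the paper: the paper's own proof is in fact just a two-sentence sketch that invokes the distinguished non-$4\pi$ singularities $\sigma_g$ and the asymmetric frame of saddle connections at $\sigma_{\Id}$, then explicitly refers back to the argument of \cref{lemma:isometries-are-correct}, which you have unpacked in full. The only cosmetic difference is that the paper phrases the rigidity step as ``a unique frame composed of two non-parallel saddle connections of distinct lengths'' rather than the full set of sides of $P$, but this is the same idea.
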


\begin{proof}
	The natural action of $G$ by left multiplication on $C_G$ provides an
	embedding of~$G$ into $\Isom(M)$, where every element acts as a translation in any local
	coordinate system. To show that $\Isom(M)$ is a subgroup of~$G$, recall that
	when constructing $M_{\Id}$, we removed a highly asymmetric polygon $P$. As
	a result, in each~$M_{g_l}$, there exists a unique frame composed of two
	non-parallel saddle connections of distinct lengths, both of which have an
	endpoint at the sole conical singularity whose angle is not $4\pi$ in
	$V_{g_l}$. By employing the same arguments as used in the proof of
	\cref{lemma:isometries-are-correct}, we can conclude that $\Isom(M)\cong G$.
\end{proof}

\begin{lemma}
	$M$ is homeomorphic to $S$.
\end{lemma}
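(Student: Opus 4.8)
The plan is to verify that the surface $M$ constructed by the gluings has the correct three Kerékjárt\'o--Richards invariants, namely genus, end space, and ends accumulated by genus, and then invoke \cref{thm:classification-surfaces}. Since by hypothesis $E(S) = E^g(S)$ and $G$ is finite, it suffices to check that $E(M)$ is homeomorphic to $E = E(S)$ and that every end of $M$ is accumulated by genus.

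First I would understand the end space. Recall $M$ is built from $|G| = N$ copies $M_{g_1}, \dots, M_{g_N}$ of the vertex surface $M_{\Id}$, which in turn is a translation structure on $\mathbb{S}^2_E$ (the sphere with $E$ removed) modified by removing the interior of a polygon $P$ and re-identifying its sides. Removing $P$ and re-gluing does not change the homeomorphism type of the underlying surface, so $M_{\Id}$ is homeomorphic to $\mathbb{S}^2_E$ and hence has end space $E$. The gluings along the finite-length slits $s_j(l', 2m)$ and $s_j(l', 2m-1)$ are gluings along finite slits: each such operation, topologically, is a connect-sum-type surgery that does not create new ends (gluing two surfaces along a pair of finite slits is homeomorphic to taking a handle-attachment or a boundary connected sum after thickening, and in particular is supported in a compact region). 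The slits accumulate only to the ends of the $S_j$, i.e., to the ends in $\sqcup_{j \in J} E(S_j)$, which by \cref{rmk:ends-decomposition} is dense in $E(\mathbb{S}^2_E) = E$. I would argue, as in the proof of \cref{lemma:topology-is-correct}, that taking the (locally finite) limit of all these gluings produces a surface whose end space is obtained from $\sqcup_{l=1}^N E(M_{g_l})$ by an identification that glues them together into a space homeomorphic to $E$; the key point is that no new end is born in the limit because each individual gluing is compactly supported and the gluing loci are locally finite. Concretely, there is a natural embedding $E(M_{\Id}) \hookrightarrow E(M)$ for the ``first'' copy which I expect to be a homeomorphism onto $E(M)$, since the other copies are glued onto it along slits converging to a dense subset of $E(M_{\Id})$ without producing additional ends.

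Next I would check that every end of $M$ is accumulated by genus. This is exactly the content of \cref{Remark:difference-hypcase}: the gluings are performed along \emph{infinite} families of slits $\{s_j(l', 2m)\}_{m \in \Z_{>0}}$, one family for each edge of $C_G$ and each $j \in J$, and gluing along an infinite locally finite family of finite slits produces infinite genus accumulating precisely to the ends toward which the slits accumulate, namely the ends of $S_j$. Since $\sqcup_{j \in J} E(S_j)$ is dense in $E$ by \cref{rmk:ends-decomposition}, and the set of ends accumulated by genus is closed, it follows that $E^g(M) = E(M)$. Combined with the previous paragraph, $M$ has the same genus behavior ($E^g(M) = E(M)$, infinite genus) and the same end space $E$ as $S$, so \cref{thm:classification-surfaces} gives $M \cong S$.

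The main obstacle I anticipate is making the limiting argument for the end space rigorous: one must argue that passing to the infinite union of vertex surfaces, glued along infinitely many slits, does not introduce ``new'' ends and does not collapse or separate existing ones, despite $G$ being finite (so the union is finite, which actually simplifies matters) — the genuinely infinite part is the slits \emph{within} each vertex surface and the infinite genus they produce. The subtlety is that while each finite-slit gluing is compactly supported, the infinitely many gluings needed to attach one $M_{g_l}$ to another accumulate to the dense set $\sqcup_j E(S_j)$, and one has to confirm that near such an accumulating end the local picture is still a single end of $M$ (now with infinite genus) rather than an end space getting a Cantor-set worth of new ends. I would handle this by exhibiting, for each end $e$ of $M_{\Id}$, a neighborhood basis of $e$ in $M$ by compact-complement open sets, checking that the slit-gluings only modify the topology inside these neighborhoods by adding handles, so the induced map on end spaces is a bijection; this is the same style of argument as in \cref{lemma:topology-is-correct} and in the work of Maluendas--Randecker.
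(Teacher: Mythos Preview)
Your approach is essentially correct but differs from the paper's, and it is worth pointing out both a minor error and the structural contrast.

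First, the minor error: removing the interior of the polygon $P$ and re-identifying its sides does \emph{not} preserve the homeomorphism type of $M_{\Id}'$; it adds a finite amount of genus. So $M_{\Id}$ is not homeomorphic to $\mathbb{S}^2_E$. This is harmless for your argument, since the end space of $M_{\Id}$ is still $E$ and the added genus is finite and localized, but the sentence as written is false.

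More substantively, the paper takes a different route. Rather than directly analyzing how the $N = |G|$ copies of $E(M_{\Id})$ merge under the gluings, the paper argues via the quotient: it shows that $M/\Isom(M)$ is homeomorphic to $S$ (this is easy, since the quotient is just $M_{\Id}$ with the slits $s_j(l,2m)$ and $s_j(l,2m-1)$ self-identified, visibly giving the right end space and genus), and then shows that $G \cong \Isom(M)$ acts \emph{trivially} on $E(M)$. The latter is proved by building a $G$-invariant compact exhaustion $\{K_n\}$ of $M$ and checking that each connected component $U$ of $M \setminus K_n$ satisfies $gU = U$ for all $g \in G$, using that the slits connecting $M_{g_l}$ to $M_{g_l g_{l'}}$ occur in every $P_j$ and at arbitrarily large height. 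Once the end-action is trivial, the finite covering $\pi \colon M \to M/G \cong S$ induces a bijection on end spaces, and the classification theorem finishes.

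Your direct approach would also work, and the obstacle you flag in your last paragraph (why the $N$ copies of $E$ coalesce into one, rather than staying separate or fragmenting) is precisely what the paper's ``trivial action on ends'' argument resolves cleanly. Your proposed neighborhood-basis argument is in effect a hands-on version of the same claim: showing that any end-neighborhood in one copy is connected through slits to the corresponding neighborhoods in all other copies. The paper's quotient formulation packages this more economically and avoids having to track the identifications end-by-end.
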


\begin{proof}
	We claim that:
	\begin{enumerate}
		\item $M/\Isom(M)$ is homeomorphic to $S$, and
		\item given that $G$ is finite, $\Isom(M)$ acts trivially on the end space
		$E(M)$.
	\end{enumerate}
	Assuming these facts, let us proceed with the proof. The covering $\pi\colon
	M\to S=M/\Isom(M)$ is a proper map (since $\Isom(M)$ is finite). Let
	$\hat{\pi}\colon E(M)\to E(S)$ denote the corresponding continuous
	surjective map between end spaces. The map $\hat{\pi}$ is also injective
	because the natural action of $\Isom(M)$ on $M$ at the level of end spaces
	is trivial. Therefore, $\hat{\pi}$ is a continuous bijection between compact
	Hausdorff spaces and thus a homeomorphism. By
	\cref{thm:classification-surfaces}, this implies that $\pi$ is a
	homeomorphism.
	
	We will now demonstrate the validity of both claims above. The gluing rules
	defined earlier imply that~$M/\Isom(M)$ is obtained by gluing,
	\emph{in $M_{\Id}$}, for every $j\in J$, $1\leq l\leq N$, and $m\in\N$, the slit
	$s_j(l,2m)$ to the slit $s_j(l,2m-1)$. These gluings create infinitely many
	handles that accumulate to $\sqcup_{j\in J} E(S_j)$, a dense subset of
	$E(M_{\Id})=E(\mathbb{S}^2_E)=E(S)$. Consequently, $M/\Isom(M)$ is a
	surface with the same end space and genus as $S$.
	
	The proof that $G\cong \Isom(M)$ acts trivially on the end space $E(M)$ is
	fundamentally the same as the proof of~\cite{APV}*{Lemma~3.5}. The main
	idea is as follows.  Take an exhaustion by compact sets $\{K_n'\}_{n\in\N}$
	of $M_{\Id}$. Given that $G$ is finite, we can choose $\{K_n'\}_{n \in \N}$
	such that $\{K_n=\cup_{g\in G}\,gK_n'\}_{n \in \N}$ is a $G$--invariant
	exhaustion of $M$ where each connected component of $M\setminus K_n$ is
	unbounded. Given that $\{K_n\}_{n\in\N}$ is $G$--invariant, $G$ permutes the
	connected components of $M\setminus K_n$. Moreover, for each such component
	$U$ and $g\in G$, we have that $gU=U$ holds. Indeed, one can construct a path
	joining any two points of $U$ and $gU$ using the fact that
	for every $g = g_l$ with $l \in \{1,\ldots,N\}$, there exists $j\in J$ and $k\in \Z_{>0}$ such that the slit $s_j(l,2k)$ is contained in $U$ and the slit $s_j(l, 2k-1)$ is contained in $gU$.
	Then~$gU=U$, implying that $G$ acts trivially on the end space~$E(M)$.
\end{proof}

The previous two lemmas complete the proof of \cref{thm:realization-finite-groups}.\qed

\begin{question}\label{question:genus0-finitegroup}
	For which topological classes of infinite-type genus zero surfaces can we
	(or cannot) realize any finite group $G$ as $\Isom(M)$ for a some
	translation structure $M$ on $S$?
\end{question}

\begin{remark}
	\label{rmk:genus0-finitegroup}
	\begin{enumerate}
		\item In~\cite{APV}*{Proposition~9.1}, it is shown that for every genus-zero
		surface $S$ with non-abelian fundamental group, if $G$ is the isometry group
		of a complete constant-curvature Riemannian metric on $S$ and $G$ is finite,
		then $G$ is isomorphic to a subgroup of $O(3)$.  This result imposes certain
		limitations on the group $G$. It is important to note that flat metrics tend
		to be incomplete, either due to the presence of conical singularities or
		because ends are situated at finite distances.
		
		\item If $S$ is an infinite-type surface featuring a finite number $n$ of
		isolated ends, then any simple non-abelian group $G$ with a cardinality
		greater than $n!$ cannot be realized as an isometry group of a translation
		surface $M$. The proof of this statement can be found
		in~\cite{APV}*{Proposition~9.2}.
	\end{enumerate}
\end{remark}

\subsection{Proof of \cref{thm:countable-case}}
\label{subsec:proof_countable_case}
In the interest of keeping the exposition short and less repetitive, we present a road map to use the results obtained up to this point to deduce the statements in  \cref{thm:countable-case}.

Item (1) follows from
\cref{thm:realization-isometries-selfsimilar}.
Item (2) follows from \cref{lem:translatable_countable_case} and \cref{thm:realization-isometries-translatable}.
~\cite{APV}*{Lemma~4.1} shows that
if $E(S)$ has Cantor--Bendixson degree at least $3$, then $S$ has a finite-type
non-displaceable subsurface, and thus by
\cref{thm:non-displaceable-implies-finite-isometries}, any isometry group must
be finite.
Furthermore, a surface whose end space has characteristic system $(\alpha, 2)$ with $\alpha$ a limit ordinal has doubly-pointed end space but is not translatable by \cref{lem:translatable_countable_case}, hence with \cref{thm:realization-isometries-translatable,thm:realization-isometries-doubly-pointed}, we have again that any isometry group must be finite. By \cref{thm:realization-finite-groups}, Item (3) of the theorem follows.
\qed

\subsection{Proof of \cref{thm:realization-isometries-translatable}}
\label{subsec:proof_realization-isometries-translatable}

\textbf{($\bm{(2) \implies (1)}$):} There exists a translation structure $M$ on $S$ such that
$\Isom(M) = \mathbb{Z}$. Let $h$ be a generator of $\Isom(M)$. We would like to
show that $h$ is a translation in the sense of \cref{def:translatable} (and
therefore $S$ is translatable). For any closed curve $\alpha$ on $S$, consider
the set~$\{h^n(\alpha): n \in \mathbb{Z}\}$. We claim that this set is not
bounded. In fact, if it was bounded, then there existed a compact subsurface~$S'
\subseteq S$ that contains $\{h^n(\alpha) : n \in \mathbb{Z}\}$. The surface
$S'$ then had the property that~$h^n(S') \cap S' \neq \emptyset$ for all~$n \in
\mathbb{Z}$.  This implies that the isometry group does not act properly
discontinuously, which is a contradiction.

Since $h$ is an isometry and hence it acts properly discontinuously, up to replacing $h$ with a power of itself, we may assume that $h(\alpha) \cap
\alpha = \emptyset$. Let $e_+$ and $e_-$ be the two distinguished ends of $S$
with finite orbit under $\Homeo(S)$. With another such replacement, we may
further assume that $h$ fixes each of $e_+$ and $e_-$.

\textbf{Case 1: $\alpha$ separates $e_+$ and $e_-$.} In this case, $h(\alpha)$
must also separate $e_+$ and $e_-$. Since $h(\alpha)$ and~$\alpha$ are disjoint,
we may say without loss of generality that $h(\alpha)$ is contained in the
component of $S \setminus \alpha$ that contains~$e_+$. The surface
$S\setminus\{\alpha\cup h(\alpha)\}$ has three connected components. Let $S'$ be
the connected component whose end space does not intersect $\{e_+,e_-\}$
and $\overline{S'}$ its closure.
Note that $\bigcup_{n \in \mathbb{Z}} h^n(\overline{S'})$ is a connected closed
and open subsurface of~$S$. Since $S$ is connected, this implies that $S' = S$.
Furthermore, we see that $\bigcup_{n > N} h^n(S')$  for any $N$  contains the
end $e_+$, and $\bigcup_{n<N} h^n(S')$ for any $N$ contains the end $e_-$.

Let $U_+$ be a neighbourhood of $e_+$.
If necessary, we may replace it with a smaller neighbourhood that has a
single boundary curve $\gamma_+$. Since $\gamma_+$ is of finite length, there
exists an $N$ such that $\gamma_+$ is contained in $\bigcup_{ -N \leq n \leq N}
h^n(S')$. Then if $n > N$, the subsurface $\bigcup_{n > N} h^n(S')$ is disjoint
from $\gamma_+$ and, as it contains the end $e_+$, the subsurface must be
contained in $U_+$. Finally, this subsurface contains $h^n(\alpha)$ for all $n >
N$. Therefore $\lim_{n \to\infty} h^n(\alpha) = e_+$. We can argue
similarly that $\lim_{n \to -\infty} h^n(\alpha) = e_-$.

\textbf{Case 2: $\alpha$ does not separate $e_+$ and $e_-$.} In this case, there
exists a curve $\beta$ that separates the two ends and we may find a subsurface
$S'$ as above bounded by $\beta$ and $h^m(\beta)$ for some $m \in \mathbb{N}$.
Since $\alpha$ is of finite length, there is an $M \in \mathbb{N}$ such that
$\alpha$ is contained in $S^M \coloneqq \bigcup_{ -M \leq n \leq M}
h^{nm}(S')$.

As in Case 1, $\lim_{n\rightarrow \infty} h^n (\beta) = \lim_{n \rightarrow
	\infty} h^n((h^m)(\beta)) = e_+$. Therefore $\lim_{n \to \infty} h^n(\alpha) =
e_+$, and similarly $\lim_{n \to -\infty} h^n(\alpha) = e_-$. Therefore $S$ is translatable.

\textbf{($\bm{(1) \implies (3)}$):} Let $h$ be a translation on $S$ in the sense of
\cref{def:translatable}. In particular, $h$ has infinite order. We will
construct a translation structure on $S$ such that $h$ is an isometry. By
\cref{lem:translatable_Z-cover}, we have a regular $\Z$--covering $\pi\colon S\to
S'$, where $S'=S/\langle h\rangle$. By
\cref{prop:translation_structure_Riemann_surface}, we can choose a translation
structure~$M'$ on $S'$. The pull-back of $M'$ via the projection map $\pi$
defines a translation structure $M$ on $S$ for which $h\in\Trans(M)<\Isom(M)$.

\textbf{($\bm{(3) \implies (2)}$):} Consider any translation structure on $S$ with isometry
group $G$. Recall that we have assumed that the end space of $S$ is doubly
pointed. Therefore by \cref{thm:realization-isometries-doubly-pointed}, $G$ must
be virtually cyclic. This proves the forward direction of $(2)$.

Now let $G$ be a virtually cyclic group and suppose that there exists a
translation structure on $S$ whose isometry group has an infinite-order
isometry. By \cref{lem:translatable_Z-cover} and
\cref{rmk:translatable_Z-cover}, we have a covering $S\to S'$ whose deck
transformation group is infinite cyclic and the end space of $S$ admits a
decomposition of the form $E(S) = \sqcup_{n \in \mathbb{Z}} E_n \sqcup \{e_+,
e_-\}$ where:
\begin{itemize}
	\item Each $E_n$ is homeomorphic to $E(S')$. Moreover, for every
	$n\neq m$, there exist open subsets $U_n^*,U_m^*$ of $E(S)$ such that
	$E_n\subset U_n^*$, $E_m\subset U_m^*$ and $U_m^*\cap U_n^*=\emptyset$.
	\item Both $e_+$ and $e_-$ are accumulation points of $\sqcup_{n \in
		\mathbb{Z}} E_n$. These are the two distinguished ends of $S$ with finite orbit under $\Homeo(S)$.
\end{itemize}
We use this structure on the space $E(S)$ to construct a translation structure $M$ on $S$ such that $\Isom(M)$ is isomorphic to $G$. There
are two cases to consider.

\textbf{Case 1: $G$ is infinite.}
As in the proof of previous results, we now specify the blueprint, vertex surface, and gluing rules.

\subsubsection*{Blueprint}  Let $\mathcal{S}=\{g_1,\dotsc,g_k\}$ be
a  set of generators for $G$.
Our blueprint will be the directed and labeled Cayley graph $C_G$ of
$G$ with respect to $\mathcal{S}$. Note that  $\Aut(C_G)$ is isomorphic to $G$.

\subsubsection*{Vertex surface $M_{\Id}$} Using the end-grafting construction
and \cref{prop:MR-construction}, we can endow $S'$ with a translation structure
$M_{\Id}'$ where all saddle connections are horizontal, have lengths in $\Z_{>0}$ and all
singularities have total angle $4\pi$. Let $K_0,K_1\subset M_{\Id}'$ be two
compact and disjoint disks that are at distance $10^{1000}$ from any saddle
connection of $M_{\Id}'$, from each other and each has diameter
$\frac{1}{10^{1000}}$ (the choice of these quantities is arbitrary but important
for the rest). For each $g_i\in\mathcal{S}$, we mark horizontal slits of the same length
$s(g_i)^{\pm}$  inside $K_0$ so that no two slits in $\{s(g_i)^{\pm}\}_{i=1}^k$
intersect. As in the proof of~\cref{thm:realization-isometries-selfsimilar}, inside
$K_1$, we carve out a convex $20$--gon $P$ and identify its opposite sides to
create a conical singularity $\sigma_{\Id}$ whose total angle is different from
$4\pi$. Moreover, we can suppose that all sides of $P$ have length
$\frac{l}{10^{1000}}$. We denote the corresponding translation surface by
$M_{\Id}$. We remark that $M_{\Id}$ is not necessarily homeomorphic to $S'$
because the surgery that creates $\sigma_{\Id}$ adds a finite amount of genus.

\subsubsection*{Gluings} For each $g\in G$, let $M_g$ denote a copy of $M_{\Id}$.
For every $(g,g_i)$ in $G\times\{g_1,\dotsc,g_k\}$, we glue the slit~$s(g_i)^-$
in $M_g$ to the slit $s(g_i)^+$ in $M_{gg_i}$. We denote the resulting
translation surface by $M$.

The arguments in the proof of \cref{lemma:isometries-are-correct} apply to $M$, which implies that $G$
is naturally isomorphic to $\Isom(M)$. We now show that $M$ is homeomorphic to $S$.

We claim $g(S)=\infty$. Indeed, if $g(S)>0$ there exist finitely many copies of
$\pi^{-1}(S')$ forming a connected subsurface $S_0\subset S$ of positive genus.
If $h$ denotes the generator of the deck transformation group of the
$\Z$--covering $\pi\colon S\to S'$, then $S=\sqcup_{n\in\Z}h^n(S_0)$ is an
infinite-genus subsurface. We remark that as a consequence~$\{e_+,e_-\}\subset
E^g(S)$.

Let $\widetilde{M}$ denote the translation surface defined by gluing $M_g$ to
$M_{gg_i}$ along a pair of slits as defined above. Since the slits involved in
the gluing are both contained in compact subsets of $M_g$ and $M_{gg_i}$, we have
that~$E(\widetilde{M})$ is homeomorphic to two disjoint copies of $E(S')$. Hence,
$E(M)$ contains a homeomorphic copy of~$\sqcup_{n\in\Z}E_n$, where each $E_n$ is
homeomorphic to $E(S')$ and any two $E_n\neq E_m$ can be separated by an open
subset of $S$. By construction, any element of $E(M)\setminus
\sqcup_{n\in\Z}E_n$ is an end coming from $C_G$. Since $G$ is virtually cyclic,  by~\cite[Satz V]{Hopf44},
$E(G)$ is formed by two points $\{\xi_+, \xi_-\}$. This
implies that $E(M)\setminus \sqcup_{n\in\Z}E_n=\{p_+,p_-\}$. Consider now an
infinite ray $\gamma$ in $C_G$ which converges to $\xi_+$.
Enumerate the vertices in $\gamma$ by~$(v_k)_{k\in\N}$ so that
$\lim_{k\to\infty}v_k=\xi_+$ and let~$E_k$ be the copy of $E(S')$ in
$\sqcup_{n\in\Z}E_n$ corresponding to $v_k$. Then $p_+$ is an accumulation point
of $\sqcup_{k\in\N}E_k$. The same reasoning applies to $p_-$. Hence both $p_+$
and~$p_-$ are accumulation points of $\sqcup_{n\in\Z}E_n$. This shows that~$E(S)$ and $E(M)$ are homeomorphic.

We claim that $E^g(S)\subset E(S)$ and $E^g(M)\subset E(M)$ are homeomorphic.
This follows from two facts. First, $E^g(M_{\Id})=E^g(S')$ since the surgery that
creates $\sigma_{\Id}$ only adds finite genus to $M_{\Id}'$. Second, all genus
created by the gluings can only accumulate to the ends of $C_G$. This follows from the fact that $M_g$ and $M_{gg_i}$ are glued along slits
contained in a compact set that naturally identifies with $K_0$, in particular,
these slits never accumulate to the ends of $S'$. This finishes the proof in
Case 1.

\textbf{Case 2: $G$ is finite.} The proof we present here is slightly
different than in Case 1. As before, we have an infinite cyclic abelian covering $S\to S'$ and a decomposition of $E(S)$ of the form $E(S) = \sqcup_{n \in \mathbb{Z}} E_n \sqcup \{e_+,	
e_-\}$. We construct the desired $M$ as follows. First, for each $n\in\Z$, we construct a
translation surface $M_n$ such that $\Isom(M_n) = G$ and, for every $m\neq n$, $M_m$ is homeomorphic to $M_n$ but not isometric. Each of the surfaces $M_n$ will have an end space which is homeomorphic to $|G|$ copies of $E(S')$. Then we glue $\{M_n\}_{n\in\Z}$  along the Cayley graph of $\Z$ (with respect to the generating set $S=\{1\}$) to obtain $M$.

\subsubsection*{Construction of $M_n$} As in the proof of previous results, we now
specify the blueprint, vertex surface, and gluing rules.

\subsubsection*{Blueprint} As in the case when $G$ was infinite, let $\mathcal{S}=\{g_1,\ldots,g_k\}$ be a set of generators.
Our blueprint is the directed and labeled Cayley graph $C_G$ of $G$ with respect to $S$. Recall that $\Aut(C_G)$ is isomorphic to $G$.

\subsubsection*{Vertex surface} For each $n\in\Z$, we consider a translation structure $M_{\Id,n}$ on $S'$  as in the preceding case, except for the choice of the compact set $K_1$ and the surgeries we performed on it, which
now depend on~$n\in\Z$. More precisely:
\begin{enumerate}
	\item We require $\diam(K_1) =	\frac{1}{10^{|n|+1}}$, so that as $|n|\to\infty$, we have $\diam(K_1)\to 0$. Inside
	$K_1$, we carve out a convex polygon $P$ whose sides can be identified to create
	a unique conical singularity $\sigma_{\Id,n}$ of total angle $2\pi c_n\neq
	4\pi$, for some integer $c_n \ge 3$. The choice is made so that $ c_n\neq 
	c_m$ if $m\neq n$.
	\item For each $g_i\in\mathcal{S}$, we mark horizontal slits $s(g_i)^\pm$ inside $K_0$, so that no two slits intersect. Moreover, for each $i=1,\ldots,n$, we require $|s(g_i)^-|<|s(g_i)^+|$ and that the lengths of all slits $s(g_i)^\pm$ are different. We use the pair of real numbers $|s(g_i)^-|<|s(g_i)^+|$ to label and direct a graph that will allow us to show that $\Isom(M_n)$ is isomorphic to $G$.
\end{enumerate}

\subsubsection*{Gluings}   For each $g\in G$, let $M_{g,n}$ denote a copy of
$M_{\Id,n}$. We denote the copy of the singularity $\sigma_{\Id,n}$ in~$M_{g,n}$ by $\sigma_{g,n}$. We glue the surfaces $\{M_{g,n}\}_{g\in G}$ as follows. For every $(g,g_i)\in G\times\{g_1,\ldots,g_k\}$, let $\mathbb{T}_{g_i}$ be a flat torus on which one can mark two disjoint  slits $s'(g_i)^\pm$, each parallel and of the same length as~$s(g_i)^\pm$, respectively. We then glue the slit $s(g_i)^-$ in $M_{g,n}$ to the slit $s'(g_i)^-$ in $\mathbb{T}_{g_i}$ and the slit $s'(g_i)^+$ in $\mathbb{T}_{g_i}$ to the slit $s(g_i)^+$ in~$M_{gg_i,n}$. We denote the resulting translation surface by $M_n$. Note that by construction $E^g(M_n)\subset E(M_n)$ is homeomorphic to $|G|$ disjoint	copies of $E^g(S')\subset E(S')$.

We claim that $\Isom(M_n)$ is isomorphic to $G$. By construction $G<\Isom(M_n)$. We now argue why $G=\Isom(M_n)$. There is a geometric realization  $C'_G$ of $C_G$ and an embedding $C'_G\hookrightarrow M_n$ such that (i) the vertex $g$ is sent to the singularity $\sigma_{g,n}$ and (ii) the edge in $C'_G$ from $g$ to $gg_i$ is sent to a path $\gamma_{g,gg_i}$ through~$\mathbb{T}_{g_i}$, completely contained in $M_{g,n}\cup \mathbb{T}_{g_i}\cup M_{gg_i,n}$. Since, for all $i=\{1,\ldots,k\}$, we imposed $|s(g_i)^-|<|s(g_i)^+|$ and that the lengths of all slits $s(g_i)^\pm$ are different, the edges of $C'_G$ inherit from the geometry of $M_n$ labels and directions which are in (orientation-preserving) correspondence with those of $C_G$ and that must be respected by the action of $\Isom(M_n)$. Hence  $\Isom(M_n)<\Aut(C_G)=G$ and we conclude our claim.

\subsubsection*{Construction of $M$} Consider the natural identification $\iota_{g,n}: M_{g,n}\to M_{\Id,n}$. Let $s^{\pm}(n)\subset M_{\Id,n}$ be two horizontal slits of length $1$ which are different, modulo identification via $\iota_{g,n}$, from all slits involved in the
construction of $M_n$. In every copy $M_{g,n}\subset M_n$ let $s^{\pm}(g,n) :=\iota_{g,n}^{-1}(s^{\pm}(n))$. For every $(g,n)\in G\times \Z$, we glue~$s^{-}(g,n)\subset M_{g,n}\subset M_n$ to $s^{+}(g,n+1)\subset M_{g,n+1}\subset M_{n+1}$.

By construction, every $g\in G$ defines an isometry $T_{g,n}\in\Isom(M_n)=G$ that sends, for every $h\in G$, the slit~$s^{\pm}(h,n)$ to $s^{\pm}(gh,n)$, respectively. Hence we can glue $(T_{g,n})_{n\in\Z}$ to define an isometry $T_g\in\Isom(M)$ which leaves every $M_n$ invariant and permutes, for fixed $n$, all the copies $\{ M_{g,n}\}_{g\in G}$ in the same way $G=\Aut(C_G)$ permutes the vertices of $C_G$. Hence $G<\Isom(M)$. To see that $\Isom(M)<G$, note that any isometry of $M $ has to leave invariant the set of singularities $(\sigma_{g,n})_{g\in G}$. Hence, by the same arguments as the ones used in the proof of \cref{lemma:isometries-are-correct}, we have that such an isometry has to coincide with some $T_g$.

Recall that $M_n$ is obtained by gluing $|G|$ copies of $S'$ along finitely many tori. Hence $E(M_n)$ is homeomorphic to $|G|$ disjoint copies of $E(S')$. On the other hand, $M$ is obtained by gluing the family $(M_n)$ using as blueprint the Cayley graph of $\Z$ with respect to the generating set $\{+1\}$. Using the same arguments as when $G$ was infinite, we can conclude $E(M)$ is homeomorphic to  $E(M) = \sqcup_{n \in \mathbb{Z}} E_n \sqcup \{e_+,
e_-\}$ and, since $M$ also has infinite genus, it is homeomorphic to $S$.
\qed

\subsection{Proof of~\cref{thm:main}}
\label{ssec:proof-main}
In the interest of keeping the exposition short and less repetitive, we present a road map to use the results obtained up to this point to deduce the statements in~\cref{thm:main}.

Case (1) follows from~\cref{thm:realization-isometries-selfsimilar}.
By~\cref{lem:translatable_non_self-similar_implies_doubly_pointed}, we have in
Case (2) that~$E(S)$ is doubly-pointed. The result in this case follows from
\cref{thm:realization-isometries-doubly-pointed} and
\cref{thm:realization-isometries-translatable}. For Case~(3), remark first that
by~\cref{thm:realization-finite-groups}, any finite group can
be realized as an isometry group. To see that in Case~(3) all isometry groups
have to be finite, note that
by~\cref{lemma:trichotomy-of-surfaces}, we have that
either (3.1) $S$ has a non-displaceable subsurface of finite type or (3.2) $E(S)$ is doubly
pointed but $S$ is not translatable. Case (3.1) follows
from~\cref{thm:non-displaceable-implies-finite-isometries}. For Case (3.2),
use~\cref{thm:realization-isometries-doubly-pointed} to conclude that
$\Isom(M)$ has to be virtually cyclic and then
use~\cref{thm:realization-isometries-translatable} to see that this group
cannot contain an element of infinite order.
\qed

\subsection{Proof of~\cref{thm:improvingAPV1}}
\label{ssec:proof-APV-improved}

By~\cite{APV}*{Theorem~6.2} and~\cref{lemma:trichotomy-of-surfaces}, it is sufficient to show that:
\begin{enumerate}
	\item[(A)] If $S$ is translatable and $E(S)$ is not self-similar then for every
	virtually cyclic group $G$, there exists a complete hyperbolic metric $M$ on $S$ such
	that $\Isom(M)$ is isomorphic to $G$.
	\item[(B)] If $E(S)$ is doubly pointed but $S$ is not translatable, then for any
	complete hyperbolic metric $M$ on~$S$, the group $\Isom(M)$ is finite.
\end{enumerate}

As in the proof of \cref{thm:main}, both (A) and (B) follow from the following
hyperbolic version of \cref{thm:realization-isometries-translatable}:

\begin{lemma} \label{lem:hyperbolic_translatable_virtually_cyclic} Let $S$ be an
	infinite-type surface with doubly pointed end space, $E(S) = E^g(S)$, and $G$ be a group. The
	following are equivalent.
	\begin{enumerate}
		\item $S$ is translatable.
		\item There exists a complete hyperbolic structure $M$ on $S$ with $\Isom(M) \cong G
		\iff G$ is virtually cyclic.
		\item There exists a complete hyperbolic structure $M$ on $S$ such that
		$\Isom(M)$ contains an element of infinite order.
	\end{enumerate}
\end{lemma}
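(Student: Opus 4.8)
The plan is to prove \cref{lem:hyperbolic_translatable_virtually_cyclic} by essentially transporting the proof of \cref{thm:realization-isometries-translatable} from the translation-surface setting to the hyperbolic setting, noticing that the three implications decompose in exactly the same way and that most of the arguments were already topological. The cyclic structure of implications $(2)\Rightarrow(1)\Rightarrow(3)\Rightarrow(2)$ will be established one arrow at a time.

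For $(2)\Rightarrow(1)$, I would argue verbatim as in the translation-surface case: assuming there is a complete hyperbolic metric $M$ on $S$ with $\Isom(M)\cong\Z$, pick a generator $h$. By \cref{cor:infinite_type_properly_discontinuously}, $\langle h\rangle$ acts properly discontinuously, so for any closed curve $\alpha$ the set $\{h^n(\alpha):n\in\Z\}$ is unbounded; after replacing $h$ by a power we may assume $h(\alpha)\cap\alpha=\emptyset$ and that $h$ fixes the two distinguished ends $e_\pm$ (which exist because $E(S)$ is doubly pointed). Then the separating/non-separating dichotomy and the subsurface-exhaustion argument used in Cases 1 and 2 of the proof of \cref{thm:realization-isometries-translatable} go through unchanged, since they only use finiteness of length of boundary curves and the topology of $S$; this shows $h$ is a translation in the sense of \cref{def:translatable}. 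For $(1)\Rightarrow(3)$, given a translation $h$, \cref{lem:translatable_Z-cover} produces a regular $\Z$--cover $\pi\colon S\to S'=S/\langle h\rangle$; one then equips $S'$ with \emph{any} complete hyperbolic metric (possible since $S'$ is an orientable surface which, being infinite-type or at least not the sphere, admits one) and pulls it back along $\pi$, making $h$ an isometry of infinite order of the resulting complete hyperbolic metric on $S$.

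The substantive content is $(3)\Rightarrow(2)$. The forward direction — that $\Isom(M)$ is always virtually cyclic when it contains an infinite-order element — follows from \cref{cor:infinite_type_properly_discontinuously} together with \cref{thm:topological-restrictions-homeo}\,(3), exactly as in \cref{thm:realization-isometries-doubly-pointed} (note $S$ is not a cylinder since it is infinite-type with $E(S)=E^g(S)$). For the converse, given a virtually cyclic $G$ one must \emph{construct} a complete hyperbolic metric on $S$ realizing $G$; here \cref{lem:translatable_Z-cover} and \cref{rmk:translatable_Z-cover} give the $\Z$--cover $\pi\colon S\to S'$ together with the decomposition $E(S)=\bigsqcup_{n\in\Z}E_n\sqcup\{e_+,e_-\}$. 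For $G$ infinite, I would run the same Cayley-graph/vertex-surface construction as in Case 1, but replacing the ``end-grafting flat vertex surface with asymmetric icosagon'' by a hyperbolic analogue: build a complete hyperbolic surface homeomorphic to $S'$, remove a small embedded hyperbolic polygon with all sides of pairwise distinct lengths and re-glue to create a distinguished cone point or a distinguished geodesic whose stabilizer structure kills all isometries, and glue the $|G|$-indexed copies along suitable pairs of disjoint geodesic arcs (the hyperbolic substitute for slits) prescribed by the Cayley graph. The Stallings-ends argument ($E(G)=\{\xi_+,\xi_-\}$) identifying the two extra ends $p_\pm$ with $e_\pm$ is purely topological and transfers. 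For $G$ finite one mimics Case 2: build surfaces $M_n$ ($n\in\Z$), each realizing $G$ as isometry group with ``shape parameters'' (cone angles / boundary-geodesic lengths) depending on $n$ so that no two are isometric, then glue the $M_n$ along a bi-infinite line.

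The main obstacle is the one flagged in the ``Methods'' section of the introduction: hyperbolic surfaces do not admit the flat ``gluing along slits'' surgery, and, dually, the rigidity trick of ``carve out an asymmetric polygon so that the shortest saddle connections pin down all isometries'' has no verbatim analogue. One must instead use the hyperbolic collar lemma and length-spectrum rigidity: arrange that the gluing curves are short geodesics whose lengths and mutual configuration are rigid, and that the surgery creating the distinguished feature produces, say, a unique shortest closed geodesic or a unique cone point of a prescribed non-standard angle, so that any isometry must preserve it and hence (by conformality / analytic continuation, or by the collar structure) be one of the prescribed $G$-translations. This is exactly the kind of argument carried out in \cite{APV}, so rather than reproduce it I would show that after the reductions above the remaining work is a special case of — or a direct adaptation of — the constructions in \cite{APV}*{Section~3 and Section~6}, and cite it for the details; the only genuinely new points are the two topological inputs (\cref{lem:translatable_Z-cover} and the Stallings-ends identification) which let one promote \cite{APV}*{Theorem~6.2} from the ``doubly pointed'' hypothesis to the sharper ``translatable vs.\ doubly-pointed-but-not-translatable'' trichotomy, and these have already been established.
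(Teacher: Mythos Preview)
Your proposal follows the same cyclic scheme $(2)\Rightarrow(1)\Rightarrow(3)\Rightarrow(2)$ as the paper, and for $(2)\Rightarrow(1)$ the paper indeed says ``verbatim as in \cref{thm:realization-isometries-translatable}'', matching your plan. A few execution differences are worth flagging.

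For $(1)\Rightarrow(3)$, your claim that $S'$ admits a complete hyperbolic metric because it is ``not the sphere'' is not quite right as stated (the torus is not the sphere either); the paper is more careful and, after noting $S'$ is not a sphere, replaces $h$ by a sufficiently high power so that $S'=S/\langle h\rangle$ has strictly negative Euler characteristic, and only then pulls back a complete hyperbolic metric.

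For $(3)\Rightarrow(2)$ the paper diverges from your sketch in two places. First, for \emph{finite} $G$ it does not run an analogue of Case~2 at all: since the lemma assumes $E(S)=E^g(S)$, the finite case is dispatched in one line by invoking \cite{APV}*{Theorem~3.7}. Second, for \emph{infinite} $G$ the paper does not attempt a ``hyperbolic substitute for slits'' or a carved-out asymmetric polygon; the latter would introduce a cone point and take you out of the category of complete hyperbolic metrics, so that branch of your hedge does not work. Instead the paper imports the \cite{APV} machinery wholesale: the vertex surface is $S'$ with $2k$ open disks removed (one per element of $\mathcal{S}$), equipped with a complete hyperbolic metric with trivial isometry group whose boundary geodesics are short in the collar-lemma sense and of pairwise distinct lengths, and it is further arranged not to be a $2$--punctured torus; separate \emph{edge surfaces} (two-holed tori with matching short boundary lengths) are then glued along the Cayley-graph edges. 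Rigidity of $\Isom(M)$ comes from the collar lemma forcing any isometry to permute these short boundary curves, and the ``not a $2$--punctured torus'' clause is what lets one distinguish vertex surfaces from edge surfaces under this permutation. The topological identification of $M$ with $S$ uses the same Stallings two-ends argument you cite. So your instinct to fall back on \cite{APV} for the hyperbolic construction was right, but the concrete mechanism is boundary geodesics plus edge surfaces rather than arc-slits or cone points.
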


\begin{remark}
	The proof of this lemma closely follows the proof
	of~\cref{thm:realization-isometries-translatable}. Hence, we concentrate our
	argumentation on the aspects that need to be modified in the hyperbolic case.
	Remark that the hypotheses of
	\cref{lem:hyperbolic_translatable_virtually_cyclic} are stronger than those
	in~\cref{thm:realization-isometries-translatable} since we ask $E(S) = E^g(S)$
	rather than just $g(S)>0$. For our purposes, this is enough.
\end{remark}

\begin{proof}
	
	The proof is completed in three parts.
	
	\textbf{($\bm{(2) \implies (1)}$):} Here the proof is verbatim as
	in~\cref{thm:realization-isometries-translatable}.
	
	\textbf{($\bm{(1) \implies (3)}$):} Let $h$ be a topological translation on $S$. As shown in \cref{lem:translatable_Z-cover}, $\langle h \rangle$ acts freely and properly discontinuously. We let
	$S'$ be the quotient of $S$ by $\langle h \rangle$. We note that $S'$ cannot be a sphere because~$S$ is of infinite-type. Furthermore, we can replace $h$ by a sufficiently high power of $h$ such that~$S' = S / \langle h \rangle$ has strictly negative Euler characteristic.
	Therefore it is possible to find a complete hyperbolic metric on $S'$. The pull-back of this
	metric to $S$ is a complete hyperbolic metric for which $h$ is an infinite-order
	isometry.
	
	\textbf{($\bm{(3) \implies (2)}$):} There are two directions to prove in this
	case. The forward direction follows immediately from~\cite{APV}*{Theorem~4.13},
	since $E(S)$ is assumed to be doubly pointed. To prove the converse, we follow
	an argument very similar to the translation surface case, with a modified vertex
	surface and using edge surfaces as in [\emph{Ibid.}].
	
	Let $G$ be a virtually cyclic group. If $G$ is finite, the result follows from
	Theorem~3.7 in [\emph{Ibid.}]. Henceforth we suppose that $G$ is infinite. We
	now specify the blueprint, vertex surface, and gluing rules.

	\subsubsection*{Blueprint} Let $\mathcal{S}=\{g_1^{\pm},\dotsc,g_k^{\pm}\}$ be
	a symmetric set of generators for $G$. Our blueprint will be the Cayley graph $C_G$ of
	$G$ with respect to $S$.
	
	\subsubsection*{Vertex surface} Let $h$ be an infinite-order isometry for some
	complete hyperbolic metric on $S$ (our hypothesis). By
	\cref{lem:translatable_Z-cover}, $S' \coloneqq S/ \langle h \rangle$ is a
	surface. As in the previous part of this proof, we note that $S'$ can be chosen to have negative Euler characteristic. Even more, with the same argument, we can ensure that $S'$ is not a $2$--punctured torus.
	We remove $2k$ disjoint open
	disks from $S'$, one for each element of $\mathcal{S}$. We label the boundary
	components of the resulting surface by these generators and will refer to them
	this way.
	
	Since $S'$ (with the disks removed) has negative Euler characteristic, we can put a complete hyperbolic
	metric $V$ on it so that the corresponding isometry
	group is trivial and such that the $2k$ boundary curves are \emph{short} and of
	pairwise distinct lengths. By ``short'' we mean that the collar lemma can be
	applied and every two curves of these lengths cannot intersect each other.  The
	details of the construction of this metric can be found
	in~\cite{APV}*{Section~3}, after the proof of their Lemma~3.1. By our
	assumptions on $S'$, we can ensure that $V$ has no isometries.
	
	\subsubsection*{Edge surfaces} We construct edge surfaces exactly as
	in~\cite{APV}*{Section~3}. For each $x \in \mathcal{S}$, we construct a complete
	hyperbolic surface $E_x$ as follows. Begin with a torus with two open disks
	removed. Consider a pants decomposition for this surface (consisting of two
	pairs of pants). We choose the interior cuffs of these pants to be of the same
	length as each other, short (as in the preceding paragraph), and of distinct
	length from each of the boundary curves of the vertex surface. We choose the two
	boundary curves of $E_x$ to be isometric to the $x$-- and $x^{-1}$--boundary
	components of the vertex surface, respectively. We will refer to these boundary
	components as the $x$-- and $x^{-1}$--boundary components of $E_x$, respectively. A
	complete hyperbolic metric can be chosen realizing all these lengths.
	
	\subsubsection*{Gluings} We now assemble a complete hyperbolic surface $M$ by
	taking one vertex surface $V_g$ for each vertex~$g$ in the Cayley graph $C_G$
	used as blueprint. Each $V_g$ is isometric to $V$, described above. Given an
	edge of~$C_G$ between $g$ and $gx$, we take~$E_x$ and glue it to $V_g$ by
	identifying the two $x$--boundary components. We glue the resulting surface to
	$V_{gx}$ by identifying the two $x^{-1}$--boundary components.
	
	Note that the hyperbolic metric on the vertex surfaces and the edge surfaces was
	constructed using a pants decomposition of each where all the cuffs were
	metrized as short curves. This creates a pants decomposition~$\mathcal{P}$ of
	$M$ composed entirely of short curves, and $\mathcal{P}$ includes the boundary
	curves of the vertex and edge surfaces.
	
	\begin{lemma}
		$M$ is homeomorphic to $S$.
	\end{lemma}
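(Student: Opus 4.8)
We need to show the hyperbolic surface $M$ assembled from vertex surfaces $V_g$ and edge surfaces $E_x$ is homeomorphic to $S$. Since $S$ has doubly-pointed end space, $E(S) = E^g(S)$, and $S$ is translatable, we have by Lemma~\ref{lem:translatable_Z-cover} and Remark~\ref{rmk:translatable_Z-cover} that $E(S)$ decomposes as $\sqcup_{n\in\Z} E_n \sqcup \{e_+,e_-\}$ with each $E_n$ homeomorphic to $E(S')$ and $e_\pm$ the accumulation points of $\sqcup_{n\in\Z} E_n$. By Theorem~\ref{thm:classification-surfaces}, it suffices to check that $g(M) = g(S) = \infty$ and that $E^g(M) \subset E(M)$ is homeomorphic to $E^g(S) \subset E(S)$ as nested spaces.

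Now let me plan the structure. First I would argue that the end space of $M$ is built from the end spaces of the vertex and edge surfaces glued along the Cayley graph $C_G$: since each $V_g$ and each $E_x$ contributes its ends, and the gluings are along compact (short) boundary curves, no new ends are created in any finite stage, and the ends of the vertex surfaces $E(V_g) \cong E(S')$ become the pieces $E_n$ while the "ends at infinity" of $C_G$ become extra points. By Stallings' theorem, since $G$ is virtually cyclic, $E(G) = \{\xi_+, \xi_-\}$ has exactly two points. Following an infinite ray in $C_G$ converging to $\xi_+$, the corresponding copies of $E(V_g)$ accumulate to a single end $e_+$ of $M$, and similarly for $\xi_-$; these are distinct because $C_G$ is a quasi-line. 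Hence $E(M) \cong \sqcup_{n\in\Z} E_n \sqcup \{e_+, e_-\}$, matching $E(S)$. The argument here is essentially identical to the translation surface case in the proof of \cref{thm:realization-isometries-translatable}, Case~1, so I would state it briefly and point to that proof.

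Next I would handle the genus. Each edge surface $E_x$ is built from a torus with two boundary disks removed, so carries genus $1$; since $C_G$ (being a Cayley graph of an infinite virtually cyclic group) has infinitely many edges, $M$ has infinite genus, so $g(M) = \infty = g(S)$. Then I would argue that this genus accumulates only to the ends coming from $C_G$, i.e.\ to $e_+$ and $e_-$: the edge surfaces are glued along short boundary curves contained in fixed collar neighborhoods, so the genus-one pieces do not accumulate to any end of a single vertex surface $E(V_g) \cong E(S')$. Combined with the fact that $S'$ (hence $V$) can be chosen with $E^g(S') = E(S')$ — here I would invoke the hypothesis $E(S) = E^g(S)$ pushed down to $S'$ — every end of $M$ is accumulated by genus, so $E^g(M) = E(M)$, matching $E^g(S) = E(S)$. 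Therefore $E^g(M) \subset E(M)$ and $E^g(S) \subset E(S)$ are homeomorphic as nested pairs, and $g(M) = g(S)$, so $M \cong S$.

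The main obstacle is verifying cleanly that the genus contributed by the infinitely many edge surfaces genuinely accumulates \emph{only} to $e_+$ and $e_-$ and not spuriously to some $E_n$, and dually that no end of $E(S')$ fails to be accumulated by genus in $M$. This requires being careful about how the collar neighborhoods of the short boundary curves sit relative to the ends of each vertex surface, and is where the "short curves / collar lemma" setup is used — precisely the tool that is available in the hyperbolic setting (and whose absence forced the slit construction in the translation surface case). I expect this to be a short paragraph once one observes that all gluing curves lie in a compact core region away from the ends of $S'$, exactly as in the translation surface argument.
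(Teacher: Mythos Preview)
Your proposal is correct and follows essentially the same approach as the paper. The paper's own proof is extremely brief: it declares the argument analogous to the translation-surface case (Case~1 of \cref{thm:realization-isometries-translatable}) and then records only that the $2k$ gluings on each $V_g$ occur within a fixed compact set $K_0$, that the edge surfaces (the paper writes ``vertex'' there, evidently a slip) carry genus~$1$, and that this added genus therefore accumulates only to the two ends of $C_G$ coming from Stallings' theorem. Your outline recovers exactly these points, with more detail than the paper itself provides.

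One small correction: the collar lemma is not what makes this topological argument work. All you need here is that the boundary circles along which you glue are compact and sit in a fixed compact region of $S'$, which is automatic once the $2k$ disks are chosen; the short-curve/collar setup is used in the companion lemma $\Isom(M)\cong G$ (to force isometries to permute the pants curves), not for identifying the homeomorphism type of $M$.
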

	
	\begin{proof}
		The proof that $M$ is homeomorphic to $S$ is analogous to the one presented for
		translation surfaces in the proof of
		\cref{thm:realization-isometries-translatable}. We remark first that $2k$ edge
		surfaces are being glued to $V_g$ along the same number of boundary components.
		In particular, all gluings in $V_g$ happen within a compact subset $K_0$.
		Moreover, each vertex surface is a genus--$1$ surface, hence the only ends that
		are accumulated by the genus added from the gluings are those coming from the
		Cayley graph $C_G$. These are only two since $G$ is virtually cyclic and
		infinite.
	\end{proof}
	
	\begin{lemma}
		$\Isom (M)$ is isomorphic to $G$.
	\end{lemma}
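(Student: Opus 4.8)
The plan is to prove that the obvious homomorphism $G \to \Isom(M)$ is an isomorphism, transcribing the argument of \cref{lemma:isometries-are-correct} with the collar lemma replacing the flat input. For the embedding: left multiplication by $g' \in G$ permutes the vertices of the blueprint $C_G$ and its edges, preserving edge labels; since every vertex surface $V_g$ is an isometric copy of the fixed $V$ and every edge surface glued along an $x$--labelled edge is an isometric copy of the fixed $E_x$, this combinatorial automorphism lifts to an isometry $T_{g'}$ of $M$ with $T_{g'}(V_g) = V_{g'g}$. This yields an injective homomorphism $g \mapsto T_g$, and what remains is surjectivity: every $T \in \Isom(M)$ equals some $T_g$.

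First I would record that the pants decomposition $\mathcal P$ of $M$ — all of whose curves are short, so that by the collar lemma each bounds an embedded collar whose width tends to infinity as the length tends to zero, with these collars pairwise disjoint — is $\Isom(M)$--invariant. Indeed, cutting $M$ along $\mathcal P$ leaves pairs of pants, which contain no essential simple closed curve other than their boundary; hence any simple closed geodesic of $M$ outside $\mathcal P$ crosses some curve of $\mathcal P$, traverses a wide collar, and is therefore long. Choosing the curves of $\mathcal P$ short enough, $\mathcal P$ is exactly the set of simple closed geodesics of $M$ below the relevant length threshold. Since $T$ preserves lengths of closed geodesics, it preserves $\mathcal P$ together with its partition by length. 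The $2k$ boundary curves of the vertex (equivalently edge) surfaces were assigned the pairwise distinct lengths of $\partial V$, chosen disjoint from every other length occurring among curves of $\mathcal P$, namely the interior cuff lengths of the pants inside the vertex surfaces and of the pants inside the edge surfaces; hence the sub-multicurve $\mathcal C \subset \mathcal P$ of these boundary curves is exactly the set of curves of $\mathcal P$ having one of those $2k$ lengths, so $T(\mathcal C) = \mathcal C$. Thus $T$ permutes the components of $M \setminus \mathcal C$, and because the interior cuff lengths used inside vertex surfaces differ from those used inside edge surfaces, $T$ maps vertex surfaces to vertex surfaces; in particular $T(V_{\Id}) = V_{g_0}$ for some $g_0 \in G$.

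Finally, after replacing $T$ by $T_{g_0}^{-1}\circ T$ I may assume $T(V_{\Id}) = V_{\Id}$. Then $T$ restricts to an isometry of the hyperbolic surface-with-geodesic-boundary $V_{\Id} \cong V$, and since $V$ was built to have no non-trivial isometries, $T|_{V_{\Id}} = \Id$. Because $M$ is connected and an isometry of a connected real-analytic Riemannian manifold is determined by its restriction to any non-empty open set, $T = \Id_M$, so the original $T$ is $T_{g_0}$. Hence $g \mapsto T_g$ is an isomorphism and $\Isom(M) \cong G$.

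The step I expect to be the main obstacle is the length bookkeeping used above: one must arrange, coherently across the infinitely many vertex and edge surfaces, that the $2k$ lengths in $\partial V$, the interior cuff lengths of pants inside vertex surfaces, and the interior cuff lengths of pants inside edge surfaces are all short and constitute three mutually disjoint finite sets of lengths, so that $\mathcal P$ is the systole-type multicurve and $\mathcal C$ is detectable inside $\mathcal P$ by length alone. This is precisely where the hyperbolic collar lemma enters — it has no flat analogue, which is why the flat construction in \cref{lemma:isometries-are-correct} is organized differently — and it is carried out exactly as in \cite{APV}*{Section~3}; granting it, the rest is routine.
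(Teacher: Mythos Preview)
Your argument is correct and tracks the paper's proof closely: embed $G$ by left multiplication, use the collar lemma to show that the boundary multicurve $\mathcal C$ is $\Isom(M)$--invariant, deduce that vertex surfaces are permuted, and finish with $\Isom(V)=\{\Id\}$.

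The one substantive difference is in how you separate vertex surfaces from edge surfaces once you know $T$ permutes the components of $M\setminus\mathcal C$. You do this metrically, by arranging that the interior cuff lengths inside vertex surfaces are disjoint from those inside edge surfaces. The paper does it \emph{topologically}: the construction takes care to choose $S'$ so that the vertex surface $V$ is not homeomorphic to a twice-punctured torus, while every edge surface is; hence no isometry can carry one to the other. This buys the paper freedom from the extra length bookkeeping you flag as the main obstacle. Note also that your formulation ``three mutually disjoint \emph{finite} sets of lengths'' is not quite right: $S'$ may be of infinite type, so a pants decomposition of $V$ can have infinitely many interior cuffs; one would need to arrange the disjointness for a countable set. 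This can be done, but the paper's topological trick sidesteps the issue entirely.
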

	\begin{proof}
		By construction, $G \leq \Isom (M)$. To check the other inclusion, consider
		any $f \in \Isom (M)$. First note that our choice of short curves guarantees
		that the $f$--image of any boundary curve of a vertex surface cannot
		intersect any of the curves in $\mathcal{P}$.  Thus, they must be sent to
		curves of $\mathcal{P}$ itself. Since $f$ is an isometry, we find that $f$
		must in fact permute the boundary curves of the vertex surfaces. Therefore,
		$f$ must permute the connected components of the complement of the boundary
		curves of the vertex surfaces. We have deliberately chosen the vertex
		surfaces not to be homeomorphic to a $2$--punctured torus. Thus, the vertex
		surfaces must be permuted by $f$. Let $g\in G$ be such that $g^{-1}\circ f$
		fixes $V_{\Id}$. Given that $V_{\Id}$ has trivial isometry group, we conclude that $f\in
		G$.
	\end{proof}
	
	The previous two lemmas complete the proof of \cref{lem:hyperbolic_translatable_virtually_cyclic}.
\end{proof}

\section{Proof of main results regarding Veech groups}
\label{sec:proofs-Veech}

The proofs of
\cref{thm:realization-Veech-selfsimilar,thm:realization-finite-Veech-groups}
closely resemble those of
\cref{thm:realization-isometries-selfsimilar,thm:realization-finite-groups}.
However, a key distinction lies in the fact that $G$ must now act on $M$ via
affine homeomorphisms, which are not necessarily isometries. As a consequence,
we need to modify the choice of vertex surfaces $M_g$. Instead of simply using
copies of $M_{\Id}$, we choose surfaces $g\cdot M_{\Id}$, where $\cdot$
represents the natural $\GL^+(2,\mathbb{R})$--action on translation surfaces
through post-composition on charts (we refer to
\cref{sssec:flow-surgeries-action} for a detailed explanation of this action).

Furthermore, to ensure successful gluings, we must also modify how slits are
defined. The specifics of these alterations will be elaborated upon for each
case in the next sections.

\subsection{Proof of \cref{thm:realization-Veech-selfsimilar}}
Recall that $E(S)$ is radially symmetric and can be written as $E(S) = \bigsqcup_{n\in\N}E_n \sqcup \{x_\infty\}$. We perform the following construction:

\subsubsection*{Blueprint} Let $\mathcal{S}$ be a generating set of $G$.
We consider the Cayley graph $C_G$ of $G$ with respect to $\mathcal{S}$. Specifically, the vertices of $C_G$ correspond to the elements of $G$, and the
edges are pairs $(g,gs)\in G\times G$ where $g\in G$ and $s\in\mathcal{S}$.

\subsubsection*{Vertex surface $M_{\Id}$} We adopt a similar approach to
constructing $M_{\Id}$ as in the proof of
\cref{thm:realization-isometries-selfsimilar}. For details, readers are referred
to the mentioned construction. Let $M_{\Id}'$ represent the translation surface
obtained from the end-grafting construction applied to $\overline{E_1} \cap
E^g(S)\subset\overline{E_1}$, and let $H_0$ be an upper Euclidean half-plane in
$M_{\Id}'$ with coordinate system $(x,y)\in\R\times[0,\infty)$.  The element of
$E(M_{\Id}')$ corresponding to the star point $x_\infty\in\overline{E_1}$ is
denoted by $x_{\infty,\Id}$.

Choose an enumeration $\{g_i\}_{i\in I}$ for $\mathcal{S}$. We define several
families of slits in $H_0$ as follows:
\begin{enumerate}
	\item For each $(g_i,n)\in\mathcal{S}\times\N$ let $s(g_i,n)$ be a horizontal
	slit of length $1$ with its left endpoint at $(2i,2(i+n))$.
	\item For each $(g_i,n)\in\mathcal{S}\times\N$ let $s(g_i^{-1},n)$ be a slit in
	$H_0$ with holonomy vector $g_i^{-1}\cdot(1,0)$ such that:
	\begin{itemize}
		\item for every $i\in I$, the family of slits $\{s(g_i^{-1},n)\}_{n\in\N}$
		is contained in an infinite horizontal ``strip'' given by
		$(\R_{>0}\times [a_i,b_i])\cap\{(x,y)\in H_0 : x>y\}$,
		\item the Euclidean distance between any two slits in
		$\{s(g_i^{-1},n)\}_{n\in\N}$ is at least 10,
		\item for every $i\neq j$ in $I$, the infinite strips
		$\R_{>0}\times [a_i,b_i]$ and $\R_{>0}\times [a_j,b_j]$
		do not intersect.
	\end{itemize}
\end{enumerate}

Due to the choices made above, all the slits defined are contained in the
quadrant of $H_0$ where $x, y > 0$. Thus, similar to the proof of
\cref{thm:realization-isometries-selfsimilar}, we can select a convex polygon $P
\subset H_0$ with a diameter less than $1$, positioned a googol away from all
previously defined slits, and having edges that can be identified to create a
conical singularity with a total angle different from $4\pi$. Furthermore, we can assume
that any two sides of~$P$ that are not identified are non-parallel and have
different lengths. Let $M_{\Id}$ denote the result of performing surgery on
$M_{\Id}'$ along $P$. This ensures that $M_{\Id}$ has a unique conical singularity
$\sigma_{\Id}$ whose total angle is not~$4\pi$ and a set of distinguished saddle
connections. Due to the choice of $P$, $\Aff(M_{\Id})$ is trivial.

\subsubsection*{Gluings} For every $h\in G$, define $M_h=h\cdot M_{\Id}$, where $h$
acts on $M_{\Id}$ as a linear transformation fixing the origin $(0,0)\in H_0$. We
use $\sigma_{h}$ and $x_{\infty,h}$ to denote the images of $\sigma_{\Id}$ and
$x_{\infty,\Id}$ in $M_h$ and $E(M_h)$, respectively. Observe that for every
$(h,g_i,n)\in G\times\mathcal{S}\times\N$, the slit $s(g_i,n)$ in~$M_h$ is parallel to and has the same length as
$s(g_i^{-1},n)$ in $M_{hg_i}$.  Consequently, for each $(h,g_i,n)\in
G\times\mathcal{S}\times\N$, we can glue the surfaces $M_h$ and $M_{hg_i}$ along
the slits $s(g_i,n)\subset M_h$ and $s(g_i^{-1},n)\subset
M_{hg_i}$. The resulting translation surface is denoted by~$M$.

According to \cref{rmk:gluing-infinite-slits}, gluing two copies of
$\mathbb{R}^2$ along two infinite families of slits that do not accumulate to a
point produces a Loch Ness monster. By applying this fact each time we glue
$M_h$ and $M_{hg_i}$ for every~$(h,g_i)\in G\times\mathcal{S}$, we deduce that
all elements in $\{x_{\infty,g}\}_{g\in G}$ merge into a single element
$y_\infty\in E(M)$.

\begin{lemma}
	\label{lemma:aff-veech}
	$\Aff(M)$ is isomorphic to $G$.
\end{lemma}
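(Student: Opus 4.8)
The plan is to produce a natural injection $G\hookrightarrow\Aff(M)$ and then to show, using the distinguished singularity $\sigma_{\Id}$ together with the rigidity built into the polygon $P$, that it is onto; as a by‑product this also gives the ``trivial kernel'' assertion of \cref{thm:realization-Veech-selfsimilar}.

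For the embedding, for each $h\in G$ I would define $T_h\colon M\to M$ so that on every vertex block $M_g=g\cdot M_{\Id}$ it is the tautological identification of the underlying surface of $M_g$ with that of $M_{hg}=(hg)\cdot M_{\Id}$. In translation charts this map has constant derivative $(hg)g^{-1}=h$, so $T_h\in\Aff(M)$ with $Df(T_h)=h$, and it is compatible with the gluings because the identified pair $s(g_i,n)\subset M_g$, $s(g_i^{-1},n)\subset M_{gg_i}$ is sent by $T_h$ to the identified pair $s(g_i,n)\subset M_{hg}$, $s(g_i^{-1},n)\subset M_{(hg)g_i}$. Thus $h\mapsto T_h$ is an injective homomorphism whose composition with $Df$ is the inclusion $G\hookrightarrow\GL^+(2,\R)$; in particular $\{T_h : h\in G\}$ meets $\Trans(M)=\ker(Df)$ only in the identity, so once surjectivity is established, $Df$ restricts to an isomorphism onto $G$ with trivial kernel.

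For surjectivity, let $F\in\Aff(M)$. Affine maps preserve cone angles, and all singularities created by the slit gluings and by the end‑grafting construction have angle $4\pi$, while the $\GL^+(2,\R)$–action forming the $M_g$ preserves cone angles; hence the set of singularities of $M$ of angle $\neq 4\pi$ is exactly $\{\sigma_g : g\in G\}$, $F$ permutes it, and after replacing $F$ by $T_{g_0}^{-1}\circ F$ for a suitable $g_0$ we may assume $F(\sigma_{\Id})=\sigma_{\Id}$; it remains to prove $F=\Id$. This is where the affine case genuinely departs from the isometric one of \cref{lemma:isometries-are-correct}, since $F$ need not preserve lengths, and I would handle it by a two–step rigidity argument at $\sigma_{\Id}$. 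First, because $\sigma_{\Id}$ is a genuine cone point, any affine self‑map of a neighbourhood of it that fixes it has a \emph{complex‑linear} derivative: in a coordinate $z$ centred at $\sigma_{\Id}$ the translation chart is $z\mapsto z^{k}$ for the integer $k\geq 2$ determined by the cone angle, and affineness forces $g(z)^{k}=Df(F)(z^{k})$ for the coordinate expression $g$ of $F$, which is impossible unless $Df(F)$ is multiplication by a complex number; so $Df(F)=c\,R_\phi$ is a similarity. Second, $F$ permutes the saddle connections issuing from $\sigma_{\Id}$, scaling all their lengths by $c$; by the choice of $P$ (convex, of diameter $<1$, with side lengths confined to, say, $[\tfrac12,\tfrac34]$, and sitting a googol away from every slit and every $4\pi$–singularity), the saddle connections coming from the sides of $P$ are the only ones at $\sigma_{\Id}$ of length $<1$, since any other loop at $\sigma_{\Id}$ enters the flat region surrounding $\sigma_{\Id}$ and must reach the nearest nontrivial topology of $M$, a googol away, before it can close up, hence has length at least $2\cdot\mathrm{googol}$. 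Therefore $F$ and $F^{-1}$ carry each side of $P$ to a saddle connection at $\sigma_{\Id}$ of length $<1$, i.e.\ again to a side of $P$, so $F$ permutes these finitely many saddle connections while scaling lengths by $c$, which forces $c=1$. Hence $Df(F)\in\SO(2,\R)$ and $F$ is an isometry.

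At this stage $F\in\Isom(M)$ fixes $\sigma_{\Id}$ and permutes the sides of $P$ \emph{preserving their lengths}, and I would conclude exactly as in \cref{lemma:isometries-are-correct}: choosing $P$ so that its side lengths are pairwise distinct and its edge‑pairing carries no residual symmetry forces $F$ to fix each side of $P$ setwise and excludes $Df(F)=-\Id$, so $F$ is the identity on an open subset of $M$, and since isometries (being conformal) are determined by their restriction to any open set, $F=\Id$. Consequently $\Aff(M)=\{T_h : h\in G\}\cong G$. The step I expect to be the main obstacle is precisely the two–step rigidity at $\sigma_{\Id}$ just described — that fixing the cone point forces $Df(F)$ to be a similarity, and that the googol–separation of $P$ from the rest of $M$ then pins the scaling factor to $1$ — together with the bookkeeping needed to make ``the sides of $P$ are the only short saddle connections at $\sigma_{\Id}$'' survive all the gluings; everything else is a routine adaptation of the isometric argument.
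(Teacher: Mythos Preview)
Your embedding $G\hookrightarrow\Aff(M)$ via left multiplication is correct and respects the gluings, and the overall strategy for surjectivity --- reduce to $F(\sigma_{\Id})=\sigma_{\Id}$ and then force $F=\Id$ using the saddle connections at $\sigma_{\Id}$ --- is the right one. But the ``first step'' of your two--step rigidity argument is wrong, and this is a genuine gap.

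You claim that because $F$ is affine and fixes the cone point, the equation $g(z)^{k}=Df(F)(z^{k})$ forces $Df(F)$ to be complex--linear. This is false: for every integer $k\ge 2$ and every $A\in\GL^+(2,\R)$ there is an affine self--homeomorphism of the standard cone $C_k$ of angle $2\pi k$ fixing the cone point and with derivative $A$. Indeed, $C_k$ is the $k$--fold cover of $\C$ branched over $0$, with translation structure pulled back by the covering map; the real--linear map $A\colon\C\to\C$ fixes $0$ and, since $\GL^+(2,\R)$ is path--connected, it lifts to $C_k$. In your equation the function $g$ is merely a homeomorphism, not holomorphic: ``affine in the translation structure'' means $\R$--affine in translation charts, not $\C$--linear, and the failure of $A$ to be complex--linear is exactly what makes $g$ non--holomorphic. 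So nothing pins $Df(F)$ down to a similarity, and your subsequent deduction that the scale factor equals $1$ has no footing.

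The paper's argument avoids this detour entirely. Because $P$ is \emph{convex} and lies a googol away from every slit and every $4\pi$--singularity, a straight ray leaving a vertex of $P$ into $H_0\setminus P$ never meets $P$ again; hence the sides of $P$ are the \emph{only} saddle connections from $\sigma_{\Id}$ to itself of length below a googol. The affine map $F$ must carry this finite collection of holonomy vectors into the set of holonomies of saddle connections from $\sigma_{\Id}$ to itself, and for a generic $P$ (pairwise non--parallel sides, lengths chosen so that no nontrivial $B\in\GL^+(2,\R)$ permutes the short holonomies or sends two of them into the long regime consistently) the only possibility is $B=\pm\Id$; then $B=-\Id$ is excluded because the slit configuration in $H_0$ has no $\pi$--rotational symmetry. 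With $B=\Id$, $F$ is a translation fixing a point, hence $F=\Id$. The point is that one argues directly with the finite set of holonomy vectors of the sides of $P$ and the genericity of $P$, rather than first trying to reduce $Df(F)$ to a similarity.
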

\begin{proof}
	There is a natural embedding of $G$ into $\Aff(M)$: To each $g\in G$, we
	associate $A_g \in\Aff(M)$ such that  the restriction $A_g|\colon M_h\to M_{gh}$
	is, in local coordinates around the origin in $h\cdot H_0\subset M_h$,
	the linear transformation defined by the matrix $g\in\GL^+(2,\mathbb{R})$. Let
	$A\in\Aff(M)$. Then there exists $h\in G$ such that~$A_h^{-1}\circ A
	(\sigma_{\Id})=\sigma_{\Id}$. In particular, $A_h^{-1}\circ A$
	leaves invariant the set of saddle connections joining $\sigma_{\Id}$ to itself.
	If we chose the polygon $P$ as in the proof of
	\cref{thm:realization-isometries-selfsimilar}, we can similarly deduce that
	$A_h=A$ and thus~$\Aff(M)$ is isomorphic to $G$. We remark that, by our
	construction, there are no non-trivial translations in $\Aff(M)$ and so the map
	$\Aff(M) \to \Gamma(M)$ has trivial kernel.
\end{proof}

\begin{lemma}
	$M$ is homeomorphic to $S$.
\end{lemma}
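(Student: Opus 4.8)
The plan is to mirror the proof of \cref{lemma:topology-is-correct}, the only new features being that here one glues along infinite families of \emph{finite} slits rather than along single infinite rays, and that the genus created by these gluings must be tracked. First observe that the linear distortions defining $M_h = h\cdot M_{\Id}$ play no role in this lemma: topologically every $M_h$ is just a copy of $M_{\Id}$, glued to its neighbours in exactly the pattern of \cref{lemma:topology-is-correct}. By \cref{prop:MR-construction} together with the subsequent polygon surgery (which adds only finitely much genus, hence alters neither the end space nor the set of ends accumulated by genus), $E^g(M_{\Id})\subset E(M_{\Id})$ is homeomorphic to $(\overline{E_1}\cap E^g(S))\subset\overline{E_1}=E_1\sqcup\{x_\infty\}$, with infinite genus accumulating precisely at $x_{\infty,\Id}$ (recall that $x_\infty\in E^g(S)$ by self-similarity). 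Thus for each $g\in G$ we get an embedding $\psi_g\colon E(M_g)\hookrightarrow E(M)$ whose image is closed in $E(M)$, since $E(M_g)$ is a closed subset of the Cantor set.

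Next I would analyse the gluings. All slits $\{s(g_i,n),\,s(g_i^{-1},n)\}_{i,n}$ used inside a fixed $M_h$ lie in the quadrant $\{x,y>0\}$ of the half-plane $H_0\subset P_0$, escape to infinity within $H_0$, and form there a closed subset disjoint from a neighbourhood of the end-grafting slits; as a subset of $M_h$ they accumulate only at the end $x_{\infty,h}$. Viewing each $M_g$ as a copy of $\R^2$ with its extra topology confined to a closed set escaping to infinity, I invoke \cref{rmk:gluing-infinite-slits}\,(3): gluing two copies of $\R^2$ along infinite closed families of parallel finite slits of equal length produces a one-ended surface. Hence each individual gluing of $M_h$ to $M_{hg_i}$ identifies $x_{\infty,h}$ with $x_{\infty,hg_i}$ and produces no other end, so that in the limit all of the $\{x_{\infty,g}\}_{g\in G}$ collapse to a single end $y_\infty\in E(M)$, while no end outside the images of the $\psi_g$ is created; consequently $\psi_g(E(M_g))\cap\psi_h(E(M_h))=\{y_\infty\}$ whenever $g\neq h$.

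Gluing the maps $\psi_g$ therefore assembles an embedding $\psi\colon\bigl(\bigsqcup_{g\in G}(E_1)_g\bigr)\sqcup\{x_\infty\}\hookrightarrow E(M)$ with $x_\infty\mapsto y_\infty$, which by the previous paragraph is onto. Since $G$ is countably infinite (the finite case being covered by \cref{thm:realization-finite-Veech-groups}) and, by radial symmetry together with the convention that subsets of $E$ are compared as nested pairs with $E^g$, each $(E_1)_g$ is homeomorphic to the pieces $E_n$ of $E(S)=\bigsqcup_{n\in\N}E_n\sqcup\{x_\infty\}$ — the separation and common-accumulation properties of \cref{def:radially-symmetric} holding on both sides — the source of $\psi$ is homeomorphic to $E(S)$ as a nested pair: the only genus the gluings create accumulates at $y_\infty$, which corresponds to the genus-accumulated point $x_\infty$, and the genus carried by each vertex piece matches that of the corresponding $E_n$. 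As $g(M)=g(S)=\infty$, \cref{thm:classification-surfaces} then yields $M\cong S$. The step I expect to require the most care is verifying that the totality of gluing slits in each $M_h$ is a closed subset of $H_0$ accumulating only at $x_{\infty,\Id}$: this single fact simultaneously rules out spurious ends, forces the star points to merge as claimed, and confines the infinite genus produced by the gluings to the already genus-accumulated end $x_\infty$ rather than to some end of $E_1$.
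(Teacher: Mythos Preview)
Your proposal is correct and follows essentially the same approach as the paper: both argue by reduction to \cref{lemma:topology-is-correct}, with the key point being that gluing $M_h$ to $M_{hg_i}$ creates no new ends, which follows from \cref{rmk:gluing-infinite-slits}. The paper's proof is two sentences; you have supplied the details it suppresses (tracking genus accumulation, verifying the nested-pair structure, and noting that the slit families are closed in $H_0$), but the underlying argument is the same.
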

\begin{proof}
	As in \cref{lemma:topology-is-correct}, the proof hinges on proving that we are
	not creating new ends when gluing~$M_h$ to~$M_{hg_i}$ for every $(h,g_i)\in
	G\times\mathcal{S}$. This conclusion follows from
	\cref{rmk:gluing-infinite-slits}.
\end{proof}

\subsection{Proof of \cref{thm:realization-finite-Veech-groups}}

The proof of this theorem closely follows that of
\cref{thm:realization-finite-groups} in \cref{ssec:proof-realization-finite-groups}. We will highlight the modifications that
need to be made and refer the reader to the proof of
\cref{thm:realization-finite-groups} for notation and additional details.

\subsubsection*{Vertex surface $M_{\Id}$}
The construction of the vertex surface is the same, except that this time we
mark slits in $P_j$ in a different way. As in the proof of
\cref{thm:realization-finite-groups}, $s_j(x,y)$ denotes a horizontal slit in
$P_j$ of length $\frac{1}{2}$ (this length is just a convention) whose left
endpoint is $(x,y)$. 
Let $G = \{g_1, \ldots, g_N\}$.
For every $g_l\in G$, we define $s_j(g_l^{-1},x,y)$ as the
slit in $P_j$ with holonomy vector $g_l^{-1}\cdot (\frac{1}{2},0)$ whose left
endpoint is $(x,y)$.

Since $G$ is finite, there exist $x_0,y_0>0$ sufficiently large such that for
every $j\in J$ and $1\leq l\leq N$, all slits in the family
\[
F_l \coloneqq \{s_j(lx_0,2ky_0)\}_{k\in\N}\cup\{s_j(g_l^{-1},lx_0,(2k-1)y_0)\}_{k\in\N}
\]
are disjoint, and for every $l\neq l'$, we have that $F_l\cap F_{l'}=\emptyset$.
We mark these slits and the finite family of slits $\{s_j(m,0)\}$, $1\leq
m\leq\alpha_j$ in $P_j$. Modulo rescaling slits in $\{s_j(m,0)\}$, we can
suppose that this family is disjoint from all slits in $\sqcup_{l=1}^N F_l$. Now
$M_{\Id}$ is defined analogously as follows. Glue the collection of planes~$\sqcup_{j\in J}P_j$ along $\{s_j(m,0)\}$, $j\in J$ and $1\leq m\leq\alpha_j$ as
in the proof of \cref{thm:realization-finite-groups} to obtain a translation
structure $M_{\Id}'$ on $\mathbb{S}_E^2$. Finally, perform a surgery along a
highly asymmetric convex polygon $P$ to create a unique conical singularity of
total angle different from $4\pi$. The resulting translation surface is denoted
by~$M_{\Id}$.

\subsubsection*{Gluings} For each $g\in G$, we define $M_{g}\coloneqq g\cdot
M_{\Id}$, where $g$ acts linearly on each copy of $P_j\subset M_{Id}$. Remark
that by definition of the $\GL^+(2,\mathbb{R})$--action in
\cref{sssec:flow-surgeries-action}, $M_{hg}=hg\cdot M_{\Id}$ because we are considering the action of this group on itself by multiplication on the left. 
Hence for all $g_l,g_{l'}\in G$, each slit $ s_j(lx_0,2ky_0)\subset M_{g_{l'}}$ has the
same holonomy vector as $s_j(s_g^{-1},lx_0,(2k-1)y_0)\subset
M_{g_{l'}g_l}$. We glue then for all $g_l,g_{l'}\in G$ the surfaces~$M_{g_{l'}}$ and $M_{g_{l'}g_l}$ along $s_j(lx_0,2ky_0)\subset M_{g_{l'}}$ and
$s_j(g_l^{-1},lx_0,(2k-1)y_0)\subset M_{g_{l'}g_l}$ for each
$k\in\N$. We denote by $M$ the resulting translation surface.

Note that these gluing rules are topologically and combinatorially
identical to those defined in the proof of
\cref{thm:realization-finite-groups}.
Also, as $G$ is a finite group, its action has the same properties as the action in the proof of \cref{thm:realization-finite-groups}, in particular, it is properly discontinuously.
Hence, $M$ is homeomorphic to $S$.
The proof that~$\Aff(M)\cong G$ is analogous to the proof of
\cref{lemma:aff-veech}.
\qed

\subsection{Proof of \cref{thm:realization-vistually-cyclic-Veech-groups}} The proof we present mimics the proof of \cref{thm:realization-isometries-translatable}. More precisely, we follow the ideas used to show there the implication $(3) \implies (2)$. To avoid repetition of the phrase \emph{as in the the implication $(3) \implies (2)$ of \cref{thm:realization-isometries-translatable}}, we will abbreviate by saying \emph{as in \cref{thm:realization-isometries-translatable}}.

By hypothesis, $S$ is translatable and $E(S)$ is not self-similar, hence by \cref{lem:translatable_non_self-similar_implies_doubly_pointed} we have that $E(S)$ is doubly pointed. Moreover, by \cref{lem:translatable_Z-cover} and \cref{rmk:translatable_Z-cover}, there exists an infinite cyclic covering $S\to S'$ and a decomposition $E(S)=\sqcup_{n\in\Z}E_n\cup\{e_+,e_-\}$ where each $E_n$ is homeomorphic to $E(S')$ and the ends in~$\sqcup_{n\in\Z}E_n$ accumulate to $\{e_+,e_-\}$. We consider two cases. On each case, we specify blueprint, vertex surface, and gluings. 

\textbf{Case 1: G is infinite}. We fix a set of generators $\mathcal{S}$ for
$G$.
The blueprint is
the directed and labeled Cayley graph $C_G$. The vertex surface $M_{\Id}$ is
defined almost identically as in
\cref{thm:realization-isometries-translatable}. The difference is
that now the slits $s(g_i)^\pm$ are neither parallel nor of the same length.
More precisely, we choose for each~$g_i\in\mathcal{S}$ a positive real
number $t_i>0$ so that if~$s(g_i)^-$ is given by a horizontal vector
$t_ie_1$ of length $t_i$ and $s(g_i)^+$ is given by the vector
$t_ig_i^{-1}e_1$ both slits $s(g_i)^\pm$ can be marked inside~$K_0$ and are
disjoint from each other.  Let~$M_g=g\cdot M_{\Id}$. For every $(g,g_i)$ in
$G\times\{g_1,\dotsc,g_k\}$, we glue  the slit~$s(g_i)^-$ in $M_g$ to the
slit $s(g_i)^+$ in~$M_{gg_i}$. These gluings are possible because these
slits are parallel and of the same length. We denote the resulting
translation surface by $M$.

The arguments in the proof of \cref{lemma:aff-veech} apply to $M$, from
where we can conclude that $\Aff(M)$ is isomorphic to $G$. Moreover, $M$ is
homeomorphic to $S$ since the construction is, up to changing the direction
and the sizes of the slits $s(g_i)^\pm$, exactly the same as in
\cref{thm:realization-isometries-translatable}.

\textbf{Case 2: G is finite}. As in the preceding case, we proceed as in the
proof of \cref{thm:realization-isometries-translatable}. The main
difference will be on the choice of slits that we mark to perform the
gluings. We fix a set of generators $\mathcal{S}$ for~$G$.
The blueprint is the directed
and labeled Cayley graph $C_G$. The surface $M_{\Id,n}$ is defined almost
identically as in \cref{thm:realization-isometries-translatable}, the
difference this time being that the slits $s(g_i)^\pm$ are chosen as in the
preceding case in this proof when $G$ was infinite. For each $g\in G$, let
$M_{g,n}:=g\cdot M_{\Id,n}$. Now we make no use of tori~$\mathbb{T}_{g_i}$.
Instead, for every $(g,g_i)$ in $G\times\{g_1,\dotsc,g_k\}$, we simply glue
the slit~$s(g_i)^-$ in $M_{g,n}$ to the slit~$s(g_i)^+$ in~$M_{gg_i,n}$.
These gluings are possible because these slits are of parallel and of the
same length.
We denote the resulting translation surface by $M_n$. The
arguments in the proof of \cref{lemma:aff-veech} apply to $M_n$, from
where we can conclude that $\Aff(M_n)$ is isomorphic to $G$. Note that by
construction $E^g(M_n)\subset E(M_n)$ is homeomorphic to $|G|$ disjoint
copies of $E^g(S')\subset E(S')$. Now to construct the desired surface $M$,
we glue the surfaces $\{M_{g,n}\}_{g\in G, n\in\Z}$ along the Cayley graph
of $\Z$ as follows. Let $s^\pm(\Id)\subset M_{\Id,n}$ be two disjoint
horizontal slits which are disjoint from all slits marked before. We denote
by $s^\pm(g)\subset M_{g,n}$ the image of these by the affine map sending
$M_{\Id,n}$ to $M_{g,n}$. For every $(g,n)\in G\times\Z$, we glue
$s^-(g)\subset M_{g,n}\subset M_n$ to~$s^+(g)\subset M_{g,n+1}\subset
M_{n+1}$. 
These gluings are possible because these slits are parallel and
of the same length. We denote the resulting surface by $M$. The arguments in
the proof of \cref{lemma:aff-veech} apply to $M$, from where we can
conclude that $\Aff(M)$ is isomorphic to $G$. Given that the gluings are,
modulo changing the direction and size of marked slits, the same as in
\cref{thm:realization-isometries-translatable}, we conclude that $M$
is homeomorphic to~$S$. \qed

\subsection{Proof of \cref{thm:uncountable-veech-no-restrictions}} Let $S$ be an
infinite-type surface and $M$ the translation structure on $S$ obtained from the
end-grafting construction applied to $E^g(S)\subset E(S)$. By
\cref{prop:MR-construction}, all saddle connections in~$M$ are horizontal and of integer
length. Therefore $\Gamma(M) < P$, where $P$ is the matrix group defined
in equation~\eqref{eq:group-P} in the statement of the theorem. Now for every $A\in P$,
let $f_{A,n}$ be the linear action of $A$ in the plane $P_n$ used in the
end-grafting construction of $M$ that fixes the line $y=0$ (see
\cref{ssec:Maluendas-Randecker} for details). These affine maps $f_{A,n}$
respect the gluings in the end-grafting construction, and hence can be glued
together to define an element $f_A\in\Aff(M)$ whose derivative is $A$. Hence $P<\Gamma(M)$.
\qed

\section{Statements and declarations}

\textbf{Data availability}. Data sharing is not applicable to this article as no datasets were generated or analysed
during the current study.\\

\textbf{Conflict of interest}. On behalf of all authors, the corresponding author states that there is no conflict of interest.\\

\bibliography{references}{}
\bibliographystyle{plain}

\end{document}